\documentclass[a4paper,10pt]{amsart}

\usepackage[margin=2cm]{geometry}

\usepackage[dvipsnames]{xcolor}
\usepackage{microtype}

\usepackage[colorlinks=true]{hyperref}         \hypersetup{
    colorlinks=true,
    linkcolor=red,    citecolor=red,    urlcolor=blue,     urlbordercolor={1 1 1},
    pdfborder={0.5 0.5 0}  }

\usepackage{nicefrac}
\usepackage{amsmath}
\usepackage{amssymb}
\usepackage{amsfonts}
\usepackage{graphicx}
\usepackage{tikz}
\usepackage{tikz-cd}
\usepackage{tikz-3dplot}      \usetikzlibrary{calc}
\usetikzlibrary{calc,patterns,angles,quotes}
\usepackage{amscd}
\usepackage[mathscr]{euscript}

\usepackage{enumitem}
\setitemize[1]{left=0.0cm} 

\usepackage{bbm}

\usepackage{bm} \usepackage[normalem]{ulem} 

\usepackage[most]{tcolorbox}

\newtheorem{theorem}{Theorem}[section]
\newtheorem{lemma}[theorem]{Lemma}

\newtheorem{proposition}[theorem]{Proposition}

\newtheorem{remark}[theorem]{Remark}

\makeatletter
\newcommand\suchthat{\@ifstar
  {\mathrel{}\middle|\mathrel{}}
  {\mid}}
\makeatother

\newcommand{\equivalent}{ \quad\Longleftrightarrow\quad }

\newcommand{\radialfunction}{\bm{r}}

\newcommand{\gauge}{\bm{g}}
\newcommand{\Fauge}{{\bfG}}
\newcommand{\smudge}{\bm{s}}
\newcommand{\Smudge}{\bm{S}}

\newcommand{\xyangle}{\bm{\alpha}}

\newcommand{\PF}{{\rm PF}}

\newcommand{\eigenvalue}{\lambda}
\newcommand{\singvalue}{\theta}

\newcommand{\closure}[1]{\overline{#1}}

\newcommand{\unitvector}{\hat{u}}
\newcommand{\unita}{\hat{a}}
\newcommand{\unitu}{\hat{u}}
\newcommand{\unitv}{\hat{v}}
\newcommand{\unitx}{\hat{x}}
\newcommand{\unity}{\hat{y}}
\newcommand{\unitz}{\hat{z}}

\newcommand{\Domain}{{\Omega}}
\newcommand{\CK}{{\calB}}

\newcommand{\Ball}[2]{\bbB_{#1}({#2})}
\newcommand{\Sphere}[2]{\bbS_{#1}({#2})}
\newcommand{\unitsphere}[1]{\bbS^{#1}_1}
\newcommand*{\volunitsphere}[1]{\omega_{#1}}

\newcommand{\contract}{\mathbin{\lrcorner}}
\DeclareMathOperator*{\esssup}{ess\,sup}

\newcommand{\Lip}{{\rm Lip}}
\newcommand{\Id}{{\rm Id}}

\newcommand{\diam}{{\operatorname{diam}}}

\newcommand{\Alt}[1]{\Lambda^{#1}}
\newcommand{\DAlt}[1]{\Lambda^{1,#1}}

\newcommand{\dom}{\operatorname{dom}}

\newcommand{\rng}{\operatorname{ran}}

\newcommand{\inv}{{-1}}

\newcommand{\Jacobian}{{\mathbf{J}}}

\newcommand{\dif}{{\mathrm d}}

\newcommand{\grad}{\operatorname{grad}}

\newcommand{\curl}{\operatorname{curl}}

\newcommand{\divergence}{\operatorname{div}}

\newcommand{\Laplace}{\Delta}

\newcommand{\cartan}{d}
\newcommand{\cocartan}{\delta}
\newcommand{\cartanx}{dx}

\usepackage{comment}

\NewDocumentCommand{\todobox}{ O{yellow} m }{\begin{tcolorbox}[
    enhanced,
    breakable,
    colback=#1!10,        colframe=#1!70!black, boxrule=0.8pt,
    arc=2pt,
    left=6pt,right=6pt,top=4pt,bottom=4pt
  ]
  \textbf{TODO:}~#2
  \end{tcolorbox}
}

\NewDocumentCommand{\donebox}{ O{gray} m }{\begin{tcolorbox}[
    enhanced,
    breakable,
    colback=#1!10,        colframe=#1!70!black, boxrule=0.8pt,
    arc=2pt,
    left=6pt,right=6pt,top=4pt,bottom=4pt
  ]
  \textbf{DONE:}~#2
  \end{tcolorbox}
}

\NewDocumentCommand{\todoboxc}{ O{yellow!15} O{yellow!60!black} m }{\begin{tcolorbox}[
    enhanced, breakable,
    colback=#1,
    colframe=#2,
    boxrule=0.8pt, arc=2pt
  ]
  \textbf{TODO:}~#3
  \end{tcolorbox}
}

\usepackage[colorlinks=true]{hyperref}         \hypersetup{
    colorlinks=true,
    linkcolor=Maroon,    citecolor=red,    urlcolor=blue,     urlbordercolor={1 1 1},
    pdfborder={0.5 0.5 0}  }

\newcommand{\bbB}{{\mathbb B}}

\newcommand{\bbN}{{\mathbb N}}

\newcommand{\bbR}{{\mathbb R}}
\newcommand{\bbS}{{\mathbb S}}

\newcommand{\bfG}{\bm{G}}
\newcommand{\bfH}{\bm{H}}

\newcommand{\bfL}{\bm{L}}

\newcommand{\bfu}{\bm{u}}

\newcommand{\bfw}{\bm{w}}

\newcommand{\calB}{{\mathcal B}}

\newcommand{\calN}{{\mathcal N}}

\newcommand{\calR}{{\mathcal R}}

\newcommand{\calX}{{\mathcal X}}

\newcommand{\frakB}{{\mathfrak B}}

\newcommand{\boldphi}{{\boldsymbol\phi}}

\usepackage{fvextra}

\title[Star domains Poincar\'e--Friedrichs inequalities]
{On the geometry of star domains and the spectra of Hodge--Laplace operators}

\author{Martin Werner Licht}
\address{TU Dresden, Zellescher Weg 12-14, 01069 Dresden, Germany}
\email{martin\_werner.licht@tu-dresden.de}

\subjclass[2010]{Primary 35P15, 52A20; Secondary 58A10, 58J32, 58J50}
\keywords{gauge function, Hodge Laplace operator, Poincar\'e--Friedrichs constant, pseudoinverse, star domain}

\begin{document}

\begin{abstract}
    We study Poincar\'e--Friedrichs--Weber constants for Sobolev differential forms on bounded convex domains and on domains star-shaped with respect to a ball. 
    Generalizing work by Guerini and Savo, 
    our main result shows that the Poincar\'e--Friedrichs--Weber constants in the Sobolev de~Rham complexes 
    on bounded convex domains are nonincreasing in the degree of the differential forms.
    In particular, the Poincar\'e constant is an upper bound for the Poincar\'e--Friedrichs--Weber constants.
    We also obtain estimates for the Poincar\'e--Friedrichs--Weber constants on domains star-shaped with respect to a ball.
    As preparatory work, which may be of independent interest, we study the gauge function and the expansion function of bounded convex sets and star domains,
    providing new proofs of Lipschitz estimates by Vre\'{c}ica and Toranzos for the expansion function and improving a Lipschitz estimate for the gauge function due to Beer.
\end{abstract}

\maketitle

\setcounter{tocdepth}{4}
\tableofcontents

\section{Introduction}

The spectra of Hodge--Laplace operators in the ${L}^{2}$ de~Rham complex are of broad interest. 
For example, the smallest positive eigenvalues of Hodge--Laplace operators 
characterize the stability of partial differential equations under perturbations in the data.
This motivates the computation of lower bounds for these eigenvalues.

The smallest positive eigenvalues also admit a different perspective: 
their inverse square roots are the \emph{Poincar\'e--Friedrichs constants} of the exterior derivative operators.
We can interpret these constants as the operator norms of the Moore--Penrose generalized inverse of the exterior derivative,
where each exterior derivative is viewed as a closed densely defined differential operator between Hilbert spaces.
Lower bounds for the positive eigenvalues therefore correspond to upper bounds for the Poincar\'e--Friedrichs constants. 
The Poincar\'e--Friedrichs constants appear in the following \emph{Poincar\'e--Friedrichs inequalities} or \emph{Weber inequalities} for Sobolev differential forms on a domain $\Domain$:
there exists $C_{\PF,\Domain,k} > 0$
such that for every $u \in H\Alt{k}(\Domain)$ there exists 
$w \in H\Alt{k}(\Domain)$ 
satisfying 
\begin{align}\label{intro:math:poincarefriedrichs}
    \cartan w = \cartan u,
    \qquad 
    \| w \|_{{L}^{2}\Alt{k}(\Domain)}
    \leq C_{\PF,\Domain,k}
    \| \cartan u \|_{{L}^{2}\Alt{k+1}(\Domain)} 
    .
\end{align}
Equivalently, for every $u \in H\Alt{k}(\Domain)$ it holds that 
\begin{align*}
    \inf_{\substack{ z \in H\Alt{k}(\Domain) \\ \cartan z = 0 }}
    \| u - z \|_{{L}^{2}\Alt{k}(\Domain)}
    \leq
    C_{\PF,\Domain,k}
    \| \cartan u \|_{{L}^{2}\Alt{k+1}(\Domain)} 
    .
\end{align*}
Special cases include the Poincar\'e constant for the gradient and the Friedrichs constant for the divergence, which are related to the eigenvalues of the Neumann and Dirichlet Laplacians, respectively.
The Weber inequalities in vector analysis are further special cases. 
Estimates for the best possible Poincar\'e--Friedrichs constant $C_{\PF,\Domain,k}$ are of general interest because they characterize the stability of numerous partial differential equations in vector calculus and exterior calculus.

The scalar case of the Poincar\'e--Friedrichs inequality, usually called the Poincar\'e inequality, has traditionally been a focus of study.
When the domain is bounded and convex, there are upper bounds for the Poincar\'e constant, optimal among those upper bounds that are proportional to the domain diameter~\cite{Pay_Wei_Poin_conv_60,bebendorf2003note}.
The inverse square of the Poincar\'e constant is the smallest positive eigenvalue of the Neumann--Laplacian.

By contrast, the spectra of Hodge--Laplace operators and the Poincar\'e--Friedrichs constants of exterior derivative operators pose more subtle problems, and fewer results are known.
The literature provides explicit upper and lower bounds for the smallest positive eigenvalues of Hodge--Laplace operators on bounded convex domains~\cite{savo2011hodge,Kouassy2019}.
Notably, the smallest positive eigenvalue of the Neumann--Laplacian is already a lower bound for the positive spectra of Hodge--Laplace operators on $k$-forms with absolute boundary conditions~\cite{bao1998inverse,mitrea2001dirichlet} and for the smallest eigenvalue of the Dirichlet--Laplacian~\cite{polya1952remarks}.

The work of Guerini and Savo~\cite{guerini2004eigenvalue} gives a refined comparison of the smallest positive eigenvalues of Hodge--Laplace operators on bounded convex domains with smooth boundary, essentially proving monotonicity: 
the smallest positive eigenvalues of the Hodge--Laplace operator on $k$-forms are nondecreasing in $k$, and if the domain is strictly convex, then they increase strictly.
However, their argument relies on the smoothness of the domain: the proof involves curvature terms of the boundary.
It is not evident how their approach generalizes to non-smooth convex domains. 
One goal of this work is to extend the work of Guerini and Savo: 
we establish that the Poincar\'e--Friedrichs constants are nonincreasing in $k$ even on non-smooth bounded convex domains.

In addition, we are interested in the Poincar\'e--Friedrichs constants on bounded domains that are not necessarily convex but satisfy a closely related condition: star-shapedness with respect to a ball. 
We establish bi-Lipschitz radial transformations from such domains onto the unit ball and analyze the singular values of their Jacobians. 
This leads to estimates for the Poincar\'e--Friedrichs constants in terms of the domain's diameter and \emph{eccentricity}.
\\

Toward that end, we review the geometry of convex and star-shaped domains. 
This also yields theoretical contributions of independent interest.
We begin with the study of star domains.
Roughly speaking, a star domain is a bounded open set containing a nonempty open subset from which every point of the domain is visible by a line segment contained in the domain.
We study two scalar functions associated with a star domain $\Domain$ that is star-shaped with respect to the coordinate origin. 
The first one is known as the \emph{gauge function} in the literature:
\begin{align*}
    \gauge : \bbR^n \to \bbR, 
    \quad 
    x \mapsto \inf\left\{ \, s > 0 \suchthat* x \in s \Domain \, \right\} 
    .
\end{align*}
The gauge function is positively homogeneous, which means that 
\begin{gather*}
    \forall x \in \bbR^{n} 
    \: : \:
    \forall t > 0
    \: : \:
    \gauge( t x ) = t \gauge(x).
\end{gather*}
The second scalar function is positively homogeneous as well. We are not aware of a systematic treatment of this function in the literature. We call it \emph{expansion function}:
\begin{align*}
    \smudge : 
    \bbR^n \setminus\{0\} \to \bbR, 
    \quad x 
    \mapsto 
    \| x \| 
    \sup\left\{ 
        t > 0 
        \suchthat*
        t x \in \|x\| \Domain 
    \right\}.
\end{align*}
In our interpretation, the gauge function describes how to transform from the star domain $\Domain$ onto the open unit ball, 
whereas the expansion function describes how to transform from the unit ball onto a given star domain.
We pay particular attention to their Lipschitz constants.
We recover results by Toranzos~\cite{toranzos1967radial} and Vre\'{c}ica~\cite{vrecica1981note}, and improve Toranzos's original statement; see Remark~\ref{remark:magnitude_lipschitz_improvement}. 
Moreover, we improve Beer's Lipschitz estimate for the gauge function~\cite{beer1975starshaped} by a factor of two; see Remark~\ref{remark:beer_improvement}. 

We extend the scalar gauge and expansion functions to radial transformations from the unit ball onto a given star domain. 
These transformations are inverse to each other and bi-Lipschitz, and we quantify their Lipschitz constants in terms of geometric properties of the domain,
analyzing the singular values of their Jacobians. 
\\

The remainder of this manuscript is organized as follows. 
We establish geometric results on convex domains and star domains in Section~\ref{section:geometry}. 
We discuss the ${L}^{2}$ de~Rham complex, the Hodge--Laplace operators, and their spectra in Section~\ref{section:analysis}. 
The main result is proven in Section~\ref{section:inequalities}.
We conclude with a translation of these results into the language of vector calculus in Section~\ref{section:vectorcalculus},
which we believe to be of particular interest for mathematical electromagnetism.

\subsection{Notation}
We use standard notation in this article. 
$\Ball{\rho}{z} \subseteq \bbR^{n}$ denotes the open ball with radius $\rho$ centered at $z$, and $\Sphere{\rho}{z} = \partial\Ball{\rho}{z}$ denotes the boundary of that ball, which is the sphere of radius $\rho$ centered at $z$.
In what follows, $\volunitsphere{n-1}$ denotes the Hausdorff measure of dimension $n-1$ of the unit sphere $\unitsphere{n-1} \subseteq \bbR^{n}$ around the origin.
For a map $f : X \to Y$ between metric spaces, we write $\Lip(f)$ for its smallest Lipschitz constant, if finite.

We write $\|x\|$ for the Euclidean norm of any $x \in \bbR^n$,
and we write $\langle x, y \rangle$ for the Euclidean inner product of any $x, y \in \bbR^{n}$.
We may also write $x \cdot y := \langle x, y \rangle$.
The identity matrix of size $n \times n$ is written $\Id_{n}$.

\section{Domains and their convex kernel}\label{section:geometry}

This section discusses domains that are star-shaped with respect to a ball. 
The main outcome is a family of radial transformations between the unit ball and domains star-shaped with respect to an open ball. 
These transformations are bi-Lipschitz, and we estimate the singular values of their Jacobians.
We first prove some geometric inequalities, then we introduce the gauge and expansion functions, 
and finally we discuss the gauge and expansion transformations.

\subsection{Star domains}

We say that a set ${\calX} \subseteq \bbR^{n}$ is \emph{star-shaped with respect to $x_0 \in {\calX}$}
if for each $x \in {\calX}$ and each $\lambda \in [0,1]$ we have $(1-\lambda) x + \lambda x_0 \in {\calX}$.
The set ${\calX}$ is called \emph{star-shaped} with respect to a subset $\CK \subseteq {\calX}$
if ${\calX}$ is star-shaped with respect to every $x_0 \in \CK$.
The \emph{convex kernel} of a set ${\calX}$ is the largest subset $\CK \subseteq {\calX}$ with respect to which ${\calX}$ is star-shaped. 
The set ${\calX}$ is called \emph{convex} if it is star-shaped with respect to each of its points.
Every convex set coincides with its convex kernel. 
We also record the following elementary fact.

\begin{lemma}
    The convex kernel $\CK$ of any set $\calX \subseteq \bbR^{n}$ is convex.
\end{lemma}
\begin{proof}
    Let $x,y \in \CK$ and $z \in \calX$. 
    By definition, any $u$ on the segment from $x$ to $z$ is in $\calX$, and hence any segment from $u$ to $y$ is in $\calX$.
    Hence, the triangle with vertices $x$, $y$, and $z$ lies in $\calX$, 
    and so the segment from $z$ to any point $w$ on the segment from $x$ to $y$ lies in $\calX$.
    This means $w \in \CK$. 
\end{proof}

A domain whose convex kernel contains a non-empty open set is called a \emph{star domain}.
For convenience, we call a star domain \emph{centered} if its convex kernel contains an open set around the origin.
Notice that any star domain is star-shaped with respect to an open ball included in its convex kernel.
Obviously, any star domain can be shifted onto a centered star domain.
See~Figure~\ref{fig:geometricsituation} for an illustration. 

In what follows, $\Domain \subseteq \bbR^{n}$ denotes an open connected set. 

\begin{figure}[t]
    \centering
    \begin{tikzpicture}\draw plot [smooth cycle] coordinates
    {(1.2,1.1)(0,1.25)(-1.5,1.5)(-1.5,-0.5)(-1.3,-1.7)(-0.2,-0.8)(1.5,-1.5)(1.3,0.1)};
\draw[style=dashed] (-0.5,0.5) circle (0.6);
    \draw[-stealth]     (-0.5,0.5) -- (0.1,0.5) node[midway,above]{$\rho$};
\draw[style=dashed] (0,0) circle (2.3);
    \draw[-stealth]     (0,0) -- (2.3,0) node[midway,above]{${R}\;$};
    \draw[fill=black]   (0,0) circle (0.03125);
\coordinate[label = below:$x_0$] (A) at (-0.5,0.5); \draw[fill=black] (A) circle (0.03125);
\end{tikzpicture}
    \caption{An example of a domain that is star-shaped with respect to an interior ball,
    centered at $x_{0}$ and of radius $\rho$, and which is contained within some ball of radius $R$.
    Note that these two balls are not necessarily concentric.}\label{fig:geometricsituation}
\end{figure}

\subsection{Star domains as graph domains}

If a domain is star-shaped with respect to a ball, then the domain is a Lipschitz domain, 
meaning that its boundary locally looks like the graph of a Lipschitz function, possibly after translation and rotation. 
This is well-known; see~\cite[Proposition~4.22]{hofmann2007geometric}. We provide another proof of that fact. 

\begin{lemma}\label{lemma:star-domains-are-lipschitz}
  Suppose that the bounded domain $\Domain \subseteq \bbR^n$ is star-shaped with respect to a ball $B$. 
  Then $\Domain$ is a Lipschitz domain. 
\end{lemma}
\begin{proof}
    Without loss of generality, we assume that $\Domain$ is star-shaped with respect to a ball $B = \Ball{\rho}{0}$ and is itself contained within a ball of radius $R > \rho$ around the origin.
    
    Let $x \in \partial\Domain$. 
    Without loss of generality, assume that $x$ lies on the positive last coordinate axis. 
    Let $H \subseteq \bbR^n$ be the coordinate hyperplane that is orthogonal to the last coordinate axis.
    Write $D_t(z) := H \cap \Ball{t}{z}$ for the relatively open disk of radius $t$ around any point $z \in H$.
    
    Since $\Domain$ is star-shaped with respect to $D_\rho(0) \subseteq \Ball{\rho}{0}$,
    it holds that for every $z \in D_\rho(0)$, there exists exactly one $x(z) \in \partial\Domain$ that lies on the same side of $H$ as $x$ and whose orthogonal projection onto $H$ equals $z$. Indeed, the ray from $z$ in the positive direction of the last coordinate hits $\partial\Domain$ at only one point. 
    By assumption, $x(0) = x$. The desired result follows if we show that $x(z)$ is Lipschitz in $z$ on $D_{\rho/2}(0)$.

    Consider any $z \in D_{\rho/2}(0)$. 
    Because $\Domain$ is star-shaped with respect to the disk $D_{\rho/2}(z) \subseteq \Ball{\rho}{0}$,
    the convex hull of $x(z) \in \partial\Domain$ and $\closure{D_{\rho/2}(z)}$ is contained within $\overline{\Domain}$. 
    This convex hull is a cone with apex $x(z)$, axis orthogonal to $H$, height $\| z - x(z) \|$, and base radius $\rho/2$. 
    One verifies that 
    \begin{gather*}
        \|z-x(z)\| \leq R.
    \end{gather*}
    The opening angle $\gamma \in (0,\pi/2)$ at the apex of this cone satisfies the lower bound
    \begin{align*}
        \gamma 
        \geq
        \arctan\left( \frac{ \rho/2 }{ R } \right)
        =: 
        \gamma^\ast 
        .
    \end{align*}
    Define the open cylinder 
    \begin{gather*}
        Z = {D}_{\rho/2}(0) \times (0,2R)
    \end{gather*}
    and the parameterized open cones
    \begin{align*}
        K(\gamma) 
        := 
        \left\{ 
            (x_1,x_2,\dots,x_{n}) \in \bbR^{n} 
            \suchthat* 
            x_n < 0, \: x_1^{2} + \dots + x_{n-1}^{2} < \tan^{2}(\gamma) |x_n|^{2} 
        \right\}.
    \end{align*}
    For all $z \in D_{\rho/2}(0)$ we have 
    \begin{align*}
        \left( x(z) + K(\gamma^{\ast}) \right) \cap Z \subseteq \overline\Domain
        .
    \end{align*}
    This cone condition is equivalent to stating that $x(z)$ is Lipschitz on $D_{\rho/2}(0)$.
    The proof is complete. See Figure~\ref{figure:star-domains-are-lipschitz} for an illustration.
\end{proof}

\begin{figure}[t]
    \centering
    \begin{tikzpicture}
        \coordinate [] (A) at (4,0.0);
        \coordinate [] (B) at (4,2.0);
        \coordinate [] (X) at (4,1.0);
        \coordinate [] (Z) at (0,1.0);
        
\draw[very thick] (0,-1.1) -- (0,1.7); 
        \draw (0,0.3) circle (1.4);
        \fill (0,0.3) circle (1.3pt) node[above left] {};
        
\def\P{5}
        \draw[thick]
        plot[domain=-2:3,samples=120] ({4 + (\P-4)*\x*(1-\x)},\x); \fill (2,2) circle (0pt) node[above right] {$\partial\Domain$};
        
\def\strip{0.42}
        \fill[gray!25]
        plot[domain=-2:3,samples=120] ({4 + (\P-4)*\x*(1-\x)         },\x)
        --
        plot[domain= 2.915:-1.920,samples=120] ({4 + (\P-4)*\x*(1-\x) - \strip},\x)
        -- cycle;
        
\draw[very thick,dotted,red] (0,0.5) -- (0,1.5) -- (X) -- cycle;
        \fill (Z) circle (1.3pt) node[left] {$z$};
        \fill (X) circle (1.3pt) node[above right] {$x(z)$};
        
    \end{tikzpicture}
    \caption{Illustration of the geometric situation in the proof of Lemma~\ref{lemma:star-domains-are-lipschitz}.} 
    \label{figure:star-domains-are-lipschitz}
\end{figure}

Bounded star domains are spherical Lipschitz graph domains. 
This means that the magnitude function on the boundary, parameterized over the unit sphere, is Lipschitz. 
We want to estimate the Lipschitz constant of the magnitude function on the unit sphere. 
Here, the unit sphere carries either the Euclidean metric or the spherical metric. 
The following result strengthens a theorem by Toranzos~\cite[Theorem~1]{toranzos1967radial}. 

\begin{theorem}\label{theorem:magnitude-functions-are-lipschitz}
    Let $\Domain \subseteq \bbR^n$ be contained in the ball $\Ball{R}{0}$ and star-shaped with respect to a ball $\Ball{\rho}{0}$. 
    Then for all $x,y \in \partial\Domain$ with angle $\xyangle \in [0,\pi]$ we have
    \begin{align}\label{math:magnitude-functions-are-lipschitz:distance}
        \Big| \| x \| - \| y \| \Big| 
        \leq 
        2 
        \frac{R}{\rho} \sqrt{ R^{2} - \rho^{2} } 
        \sin\left( \frac{\xyangle}{2} \right)
        =
        \frac{R}{\rho} \sqrt{ R^{2} - \rho^{2} } 
        \cdot 
        \Big\| \tfrac{x}{\| x \|} - \tfrac{y}{\| y \|} \Big\| 
        .
    \end{align}
    In particular, 
    \begin{align}\label{math:magnitude-functions-are-lipschitz:angle}
        \Big| \| x \| - \| y \| \Big| \leq \frac{R}{\rho} \sqrt{ R^{2} - \rho^{2} } \cdot \xyangle 
        .
    \end{align}
\end{theorem}
\begin{proof}
    Let $x, y \in \partial\Domain$ with $\| x \| \geq \| y \|$ and write $\xyangle \in [0,\pi]$ for the angle between $x$ and $y$.
    If $\| x \| = \rho$, then $\|y\| = \rho$, and the statement holds trivially. 
    Hence, we assume $\|x\| > \rho$.
    
    We let $\xyangle^{\ast} \in (0,\pi/2)$ be defined by $\cos(\xyangle^{\ast}) = \rho / \| x \|$.
    The boundary point $y$ cannot lie within the interior of the convex hull of $x$ and $\overline{\Ball{\rho}{0}}$,
    because this interior is contained in $\Domain$.
    Let $z \in \overline\Domain$ be the positive multiple of $y$ which lies on the boundary of the convex hull of $x$ and $\overline{\Ball{\rho}{0}}$.
    Then $\rho \leq \| z \| \leq \| y \|$; see also Figure~\ref{figure:magnitude-functions-are-lipschitz}. 
    
    We prove~\eqref{math:magnitude-functions-are-lipschitz:distance}.
    First, we consider the case $\xyangle \leq \xyangle^{\ast}$. 
    By definition,
    \begin{align*}
        \cos(\xyangle^{\ast} - \xyangle) \|z\| = \rho.
    \end{align*}
    Thus, 
    \begin{align*}
        \|x\| = \rho \sec(\xyangle^{\ast}),
        \qquad
        \|z\| = \rho \sec(\xyangle^{\ast} - \xyangle),
        \qquad 
        \|z\| \leq \|y\|.
    \end{align*}
    Notice that 
    \begin{align*}
        \|x\| - \|z\| 
        &= 
        \|x\| \left( 1 - \frac{ \|z\| }{ \|x\| } \right)
        \\&= 
        \|x\| \left( 1 - \frac{ \cos( \xyangle^{\ast} ) }{ \cos( \xyangle^{\ast} - \xyangle ) } \right)
        \\&= 
        \|x\| \left( \frac{ \cos( \xyangle^{\ast} - \xyangle ) - \cos( \xyangle^{\ast} ) }{ \cos( \xyangle^{\ast} - \xyangle ) } \right)
        \\&= 
        2 \|x\| \left( \frac{ \sin( \xyangle/2 ) \sin( \xyangle^{\ast} - \xyangle/2 ) }{ \cos( \xyangle^{\ast} - \xyangle ) } \right)
        \leq 
        2 \|x\| \sin( \xyangle/2 ) \frac{ \sin( \xyangle^{\ast} ) }{ \cos( \xyangle^{\ast} ) }
        .
    \end{align*} We know from the geometric setting that 
    \begin{align*}
        \sin(\xyangle^\ast)   = \frac{\sqrt{ \|x\|^{2} - \rho^{2} }}{\|x\|},
        \qquad 
        \cos(\xyangle^{\ast}) = \frac{\rho}{\|x\|}.
    \end{align*}
    Therefore, 
    \begin{align*}
        &
        \|x\| \frac{ \sin( \xyangle^{\ast} ) }{ \cos( \xyangle^{\ast} ) }
= 
        \|x\| \frac{ \sqrt{ \|x\|^{2} - \rho^{2} } }{ \rho }
        \leq 
        \frac{R}{\rho} \sqrt{ R^{2} - \rho^{2} } 
        .
    \end{align*}
    Now consider the case $\xyangle^{\ast} < \xyangle \leq \pi$. 
    By construction, 
    \begin{gather*}
        \| z \| = \rho, 
        \qquad 
        \sin( \xyangle^{\ast} / 2 ) < \sin( \xyangle / 2 ).
    \end{gather*}
    We find 
    \begin{gather*}
        \| x \| - \| y \| 
        \leq 
        \|x\| - \|z\| 
        = 
        \|x\| - \rho 
        \leq 
        2 \sin\left( \frac{\xyangle}{2} \right) 
        \frac{ \|x\|-\rho }{ 2 \sin\left( \frac{\xyangle}{2} \right)}
        .
    \end{gather*}
    By the half-angle formula, 
    \begin{gather*}
        \sin\left( \frac{\xyangle}{2} \right)^{2}
        \geq 
        \sin\left( \frac{\xyangle^{\ast}}{2} \right)^{2}
        = 
        \frac 1 2 
        \left( 1 - \cos(\xyangle^{\ast}) \right)
        =
        \frac 1 2 
        \left( 1 - \frac{\rho}{\|x\|} \right)
        =
        \frac{\|x\|-\rho}{2\|x\|}
        .
    \end{gather*}
    Therefore, 
    \begin{align*}
        \frac{ 
            \| x \| - \| y \| 
        }{
            2 \sin\left( \frac{\xyangle}{2} \right) 
        }
        \leq 
        \frac{ \|x\|-\rho }{ 2 \sin\left( \frac{\xyangle^{\ast}}{2} \right)}
        &
        =
        \sqrt{ \frac{1}{2} (\|x\|-\rho) \|x\|}
        \\&
        = 
        \sqrt{ \frac{\|x\|}{2\rho} (\|x\|\rho-\rho^{2}) }
        \leq 
        \sqrt{ \frac{R}{2\rho} (R\rho-\rho^{2}) }
\leq 
        \frac{R}{\rho} 
        \sqrt{ R^{2} - \rho^{2} }
        .
    \end{align*} 
    With that,~\eqref{math:magnitude-functions-are-lipschitz:distance} is evident.
    
    Finally, {}~\eqref{math:magnitude-functions-are-lipschitz:angle} follows because the Euclidean distance on the unit sphere is smaller than the spherical metric. 
    The proof is complete. 
\end{proof}

\begin{figure}[t]
    \centering
    \begin{tikzpicture}
\coordinate [label=below left:$0$] (A) at (0,0);
        \coordinate [label=below right:$x$] (B) at (8,0);
        \coordinate [] (C) at (1.530732,3.69552);
        \def\myangle{0.25*pi/2} \coordinate [label=above right:$y$] (Y) at ({7*cos(\myangle r)}, {7*sin(\myangle r)});
        \coordinate [label=above right:$z$] (Y') at ({5*cos(\myangle r)}, {5*sin(\myangle r)});
        \draw[dotted] (A) -- (Y);
\draw[thick] (A) -- (B) -- (C) -- cycle;
\draw (A) -- ($(A)!0.5!(B)$) node[midway, below] {};
        \draw (B) -- ($(B)!0.5!(C)$) node[midway, right] {};
        \draw (C) -- ($(C)!0.5!(A)$) node[ left] {$\rho$};
\draw[thick] (A) +(   0:1.0) arc[start angle=   0,end angle=22.5,radius=1.0] node[midway,left,xshift=17pt,yshift=2pt] {$\xyangle$};
        \draw[thick] (A) +(22.5:1.0) arc[start angle=22.5,end angle=67.5,radius=1.0] node[midway,left,xshift=40pt,yshift=10pt] {$\xyangle^\ast-\xyangle$};
        \draw[thick] (B) +(-1.0,0) arc[start angle=180,end angle=150.5,radius=1.0] node[midway,right,xshift=-13pt] {$\gamma$}; 
        \draw[thick] (C) +(247.5:1.0) arc [start angle=247.5, end angle=330.5, radius=1.0];
\draw[dashed] (A) ++(-5.5:4) arc[start angle=-5.5,end angle=90.5,radius=4];
        \draw[dashed] (A) ++(-5.5:5) arc[start angle=-5.5,end angle=30,radius=5];
        \draw[dashed] (A) ++(-5.5:8) arc[start angle=-5.5,end angle=30,radius=8];
        
        \fill (1.72,3.18) circle (1pt);
    \end{tikzpicture}
    \caption{Illustration of the geometric situation in the proof of Theorem~\ref{theorem:magnitude-functions-are-lipschitz}.}\label{figure:magnitude-functions-are-lipschitz}
\end{figure}

\begin{remark}\label{remark:magnitude_lipschitz_improvement}
    The original statement by Toranzos~\cite[Theorem~1]{toranzos1967radial} proves Lipschitz continuity of the magnitude function with respect to the spherical metric. 
    Theorem~\ref{theorem:magnitude-functions-are-lipschitz} recovers the same Lipschitz constant even when we use Euclidean metric on the unit sphere.
    That already implies the Lipschitz estimate with respect to the spherical metric. 
\end{remark}

The unit sphere is mapped onto the domain boundary by rescaling each unit vector. 
This is a parameterization of the domain boundary, 
and the magnitude function on the boundary has been shown to be Lipschitz with respect to that parameterization. 
We can now prove that the mapping from unit sphere onto the boundary itself is Lipschitz. 
Essentially, this is a result by Vre\'{c}ica~\cite[Theorem~1]{vrecica1981note}, 
for which we provide a new proof with standard Hilbert space arguments.

\begin{lemma}\label{lemma:vrecica-via-hilbert}
    Let $\Domain$ be a domain that is star-shaped with respect to the open ball of radius $\rho > 0$ around the origin.
    Suppose furthermore that $0 < r < R$ with $\Ball{r}{0} \subseteq \Domain \subseteq \Ball{R}{0}$.
    If $x,y \in \partial\Domain$ and 
    \[
        \unitx = \frac{x}{\|x\|}, \qquad \unity = \frac{y}{\|y\|},
    \]
    then 
    \begin{align}\label{math:vrecica-via-hilbert}
        r
        \| \unitx - \unity \|
        \leq 
        \| x - y \|
        \leq 
        \frac{R^{2}}{\rho}
        \| \unitx - \unity \|
        .
    \end{align}
\end{lemma}

\begin{proof}
    Let $x, y \in \partial\Domain$ and let $\unitx, \unity$ be unit vectors as in the statement. We see 
    \begin{align*}
        \| x - y \|^{2}
        &
        =
        \Big\| \|x\| \unitx - \|y\| \unity \Big\|^{2}
        \\&
        =
        \|x\|^{2} - 2 \|x\| \|y\| \langle \unitx, \unity \rangle + \|y\|^{2}
        \\&
        =
        \|x\|^{2} - 2 \|x\| \|y\|                      + \|y\|^{2}
        + 
        2 \|x\| \|y\| 
        - 
        2 \|x\| \|y\| \langle \unitx, \unity \rangle
        \\&
        =
        \|x\|^{2} - 2 \|x\| \|y\| + \|y\|^{2}
        + 
        \|x\| \|y\| \left( 2 - 2 \langle \unitx, \unity \rangle \right)
        \\&
        =
        \left( \|x\| - \|y\| \right)^{2}
        + 
        \|x\| \|y\| \| \unitx - \unity \|^{2} 
        \\&
        \geq 
        r^{2} \| \unitx - \unity \|^{2} 
        .
    \end{align*}
    This shows the lower bound. 
    We compute the upper bound using Theorem~\ref{theorem:magnitude-functions-are-lipschitz}:
    \begin{align*}
        \| x - y \|^{2}
        &
        \leq 
        \frac{R^{2}}{\rho^{2}} \left( R^{2} - \rho^{2} \right) \| \unitx - \unity \|^{2} 
        + 
        R^{2} \| \unitx - \unity \|^{2} 
        \\&
        = 
        \left( \frac{R^{2}}{\rho^{2}} \left( R^{2} - \rho^{2} \right) + R^{2} \right) \| \unitx - \unity \|^{2} 
\\&
        = 
        \left( \frac{R^{4}}{\rho^{2}} \right) \| \unitx - \unity \|^{2} 
        .
    \end{align*}
    The proof is complete. 
\end{proof}

The radial projection from the boundary of a bounded centered star domain onto the unit sphere defines a bi-Lipschitz transformation. 
The following estimates hold when the unit sphere carries the spherical metric and the domain boundary carries the Euclidean metric. 

\begin{lemma}\label{lemma:star-domains-are-star-graphs}
    Let $\Domain$ be a domain that is star-shaped with respect to the open ball of radius $\rho > 0$ around the origin.
    Suppose furthermore that $0 < r < R$ with $\Ball{r}{0} \subseteq \Domain \subseteq \Ball{R}{0}$.
    If $x, y \in \partial\Domain$ and $\xyangle \in [0,\pi]$ is the angle between $x$ and $y$, then 
    \begin{align}\label{math:star-domains-are-star-graphs}
        \frac{2r}{\pi} \xyangle 
        \leq 
        \| x - y \|
        \leq 
        \frac{R^{2}}{\rho}
        \xyangle
        .
    \end{align}
\end{lemma}
\begin{proof} 
    Let $x, y \in \partial{\Domain}$ have an angle $\xyangle \in [0,\pi]$.
    The lower estimate follows by 
    \begin{align*} 
        \| x - y \|^{2} 
        &
        = 
        \| x \|^{2} + \| y \|^{2} - 2 \| x \| \cdot \| y \| \cos(\xyangle)
        \\&
        = 
        \left( \| x \| - \| y \| \right)^{2} + 2 \| x \| \cdot \| y \| ( 1 - \cos(\xyangle) )
        \\&
        \geq  
        2 r^{2} \left( 1 - \cos(\xyangle) \right)
        =
        4 r^{2} \sin^{2}(\xyangle/2)
        \geq
        \frac{4r^{2}}{\pi^{2}} \xyangle^{2}
        .
    \end{align*}
    Here, we have used $\sin(\xyangle/2) \geq \xyangle / \pi$ on the interval $[0,\pi]$. 
    The upper estimate immediately follows from Lemma~\ref{lemma:vrecica-via-hilbert}.
\end{proof}

\subsection{Positively homogeneous scalars}

We associate several radial functions with star domains. 
A scalar function $f : \bbR^n \to \bbR$ is called \emph{positively homogeneous} (of degree one)
if for all $x \in \bbR^n$ and for all $t \in [0,\infty)$ we have 
\begin{align*}
    f(tx) = t f(x).
\end{align*}
Several positively homogeneous functions appear in the context of convex sets and star domains.
\\

We first review a positively homogeneous function known as the \emph{gauge function}~\cite{beer1975starshaped} or \emph{Minkowski functional}. 
Whenever $\Domain \subseteq \bbR^n$ is a domain star-shaped with respect to the origin,
we define the gauge function 
\begin{align*} 
    \gauge : \bbR^n \to \bbR,
    \quad 
    x \mapsto \inf\left\{ \, s > 0 \suchthat* x \in s \Domain \, \right\} 
    .
\end{align*}
This function is easily seen to be positively homogeneous: 
as we move along a direction away from the origin towards a boundary point, the value of the gauge function increases linearly from zero and equals one at the boundary of the domain. If we move away from the origin along a ray that never hits the boundary, the gauge function equals zero along that ray.
Roughly speaking, we may interpret it as a description of how to transform the star domain into the unit ball.
\\

A complementary object is what we call \emph{expansion function}. 
When $\Domain \subseteq \bbR^n$ is a bounded centered star domain, 
then we define on the unit sphere 
\begin{align*}
    \smudge : 
    \unitsphere{n-1} \to \bbR, 
    \quad 
    \unitvector \mapsto 
    \sup\left\{ 
        t > 0 \suchthat t \unitvector \in \Domain 
    \right\}.
\end{align*}
For $\unitvector \in \unitsphere{n-1}$, the value $\smudge(\unitvector)$ is the norm of the intersection point of $\partial\Domain$ with the half-ray from the origin in direction $\unitvector$.
We define the expansion function 
\begin{align*}
    \smudge : 
    \bbR^n \to \bbR,
\end{align*}
as the positively homogeneous extension to Euclidean space. 
Explicitly, for $x \in \bbR^{n}$ with $x \neq 0$ it holds that 
\begin{align*}
    \smudge(x)
    =
    \|x\| \smudge\left( \frac{x}{\|x\|} \right)
    =
    \| x \| 
    \sup\left\{ 
        t > 0 
        \suchthat*
        t \frac{x}{\|x\|} \in \Domain 
    \right\}
    .
\end{align*}

The gauge function and the expansion function are related:
\begin{gather*}
    \forall x \in \bbR^{n}
    \: : \:
    \smudge(x) \gauge(x) = \| x \|^{2}.
\end{gather*}
Indeed, if $x \neq 0$, then $\gauge(x)^{-1} x$ is the unique positive rescaling of $x \in \bbR^{n} \setminus \{0\}$ that lies on $\partial\Domain$,
and the same vector equals $\|x\|^{-1} \smudge( x / \|x\| ) x$.

\begin{remark} 
    Different functions are associated with star domains. 
    We mention the so-called radial function~\cite{lin2023lipschitz} $\radialfunction : \bbR^{n} \setminus \{0\} \to \bbR$. 
    The expansion function and the radial function agree on the unit sphere, but are otherwise distinct:
    the expansion function is positively homogeneous of degree one whereas the radial function is positively homogeneous of degree negative one. 
    In particular, $\radialfunction( t x ) = \radialfunction(x) / t$ for any $x \in \bbR^{n}$ and $t > 0$.
\end{remark}

If the domain is convex, then this is reflected in the convexity of the gauge function.

\begin{lemma}\label{lemma:gauge-and-convexity}
    Let $\Domain$ be a domain that is star-shaped with respect to the origin. 
    Then 
    \begin{gather}\label{lemma:gauge-and-convexity:sublevelset}
        \Domain = \left\{ x \in \bbR^{n} \suchthat \gauge(x) < 1 \right\}
        .
    \end{gather}
    Moreover, 
    $\Domain$ is convex 
    if and only if 
    its gauge function is convex.
\end{lemma}
\begin{proof}
    The identity~\eqref{lemma:gauge-and-convexity:sublevelset} is clear from definitions.
    Suppose that $\gauge$ is convex. Then its sublevel sets are convex, 
    and~\eqref{lemma:gauge-and-convexity:sublevelset} shows that $\Domain$ is convex. 
    
    Suppose that $\Domain$ is convex. Let $t \in [0,1]$ and $x,y \in \bbR^{n}$.
    Consider any $s_x > \gauge(x)$ and $s_y > \gauge(y)$. 
    Then $x \in s_{x} \Domain$ and $y \in s_{y} \Domain$.
    Via the convexity of $\Domain$, it follows that
    \begin{gather*}
        t x + (1-t) y \in t s_x \Domain + (1-t) s_y \Domain.
    \end{gather*}
    Using convexity of the domain once again, we verify that
    \begin{align*}
        t s_x \Domain + (1-t) s_y \Domain
        &
        =
        \left( t s_x + (1-t) s_y \right)
        \left(
            \frac{ t s_x }{ t s_x + (1-t) s_y } \Domain + \frac{ (1-t) s_y }{ { t s_x + (1-t) s_y } } \Domain
        \right)
        \\&
        \subseteq
        ( t s_x + (1-t) s_y ) \Domain
        .
    \end{align*}
    The definition of the gauge function and $\Domain$ being open implies
    \begin{gather*}
        \gauge\left( t x + (1-t) y \right)
        < 
        ( t s_x + (1-t) s_y )
        .
    \end{gather*}
    Since $s_x$ and $s_y$ were arbitrary, it follows that 
    \begin{gather*}
        \gauge\left( tx + (1-t)y \right) \leq t \gauge(x) + (1-t) \gauge(y).
    \end{gather*}
    This proves the convexity of $\gauge$. The proof is complete.
\end{proof}

We study the Lipschitz constants of these positively homogeneous radial functions. 
The following lemma bounds the local variation of the magnitude of the boundary points. 

\begin{lemma}\label{lemma:geometric-auxiliary}
    Assume that $\Domain$ is a bounded star domain that is star-shaped with respect to the ball $\Ball{\rho}{0}$ and is contained in the ball $\Ball{R}{0}$.
    Let $x \in \partial\Domain$ and let $\gamma \in (0,\pi/2)$ with $\sin(\gamma) = \rho / \| x \|$.
    Let $y \in \partial\Domain$ with $\| x \| \geq \| y \|$ and suppose that the angle $\xyangle$ between $x$ and $y$ is at most $\pi/2 - \gamma$.
    Then 
    \begin{align*}
        \| y \| \geq \frac{\sin(\gamma)}{\sin(\gamma+\xyangle)} \| x \|.
    \end{align*}
\end{lemma}
\begin{proof}
    Let $z \in \overline\Domain$ be the unique intersection point of the ray from the origin through $y$
    with the boundary of the convex hull of $x$ and $\overline{\Ball{\rho}{0}}$. 
    Since $\xyangle \leq \pi/2 - \gamma$, we know that $\|y\| \geq \|z\|$. 
    Consider the triangle whose vertices are $x$, $z$, and the origin. 
    The angle at $x$ is $\gamma$ and its opposing side has length $\|z\|$.
    The angle at $z$ is $\pi - \gamma - \xyangle$ and its opposing side has length $\|x\|$.
    Using the sine theorem,
    \begin{align*}
        \| z \| 
        = 
        \frac{\sin(\gamma)}{\sin(\pi-\gamma-\xyangle)} \| x \|
        = 
        \frac{\sin(\gamma)}{\sin(\gamma+\xyangle)} \| x \|
        .
    \end{align*}
    This shows the desired inequality.
\end{proof}
\begin{figure}[t]
\centering
\begin{tikzpicture}
\coordinate [label=below left:$0$] (A) at (0,0);
\coordinate [label=below right:$x$] (B) at (8,0);
\coordinate [] (C) at (1.530732,3.69552);
\def\myangle{0.25*pi/2} \coordinate [label=above right:$y$] (Y) at ({7*cos(\myangle r)}, {7*sin(\myangle r)});
\coordinate [label=above right:$z$] (Y') at ({5*cos(\myangle r)}, {5*sin(\myangle r)});
\draw[dotted] (A) -- (Y);
\fill (1.70000,3.15000) circle (1pt);
\draw[thick] (A) -- (B) -- (C) -- cycle;
\draw (A) -- ($(A)!0.5!(B)$) node[midway, below] {};
\draw (B) -- ($(B)!0.5!(C)$) node[midway, right] {};
\draw (C) -- ($(C)!0.5!(A)$) node[ left] {$\rho$};
\draw[thick] (A) +(   0:1.0) arc[start angle=   0,end angle=67.5,radius=1.0] node[midway,left,xshift=29pt,yshift=14pt] {$\frac{\pi}{2}-\gamma$};
\draw[thick] (A) +(22.5:1.0) arc[start angle=22.5,end angle=67.5,radius=1.0] node[midway,left,xshift=25pt,yshift=-13pt] {$\xyangle$};
\draw[thick] (B) +(-1.0,0) arc[start angle=180,end angle=150.5,radius=1.0] node[midway,right,xshift=-13pt] {$\gamma$}; 
\draw[thick] (C) +(247.5:1.0) arc [start angle=247.5, end angle=330.5, radius=1.0];
\end{tikzpicture}
\caption{Illustration of the geometric situation in the proof of Lemma~\ref{lemma:geometric-auxiliary}}
\end{figure}

\begin{theorem}\label{theorem:lipschitz-of-scalar-functions}
    Let $\Domain \subseteq \bbR^n$ be contained in the ball $\Ball{R}{0}$ and star-shaped with respect to a ball $\Ball{\rho}{0}$. 
    The gauge function $\gauge$ satisfies 
    \begin{align}\label{math:lipschitz-of-scalar-functions:gauge}
        \forall x, y \in \bbR^{n} 
        \: : \:
        \left| \gauge(x) - \gauge(y) \right| \leq 
        \frac{1}{\rho}
        \| x - y \|
        .
    \end{align}
    The expansion function $\smudge$ satisfies 
    \begin{align}\label{math:lipschitz-of-scalar-functions:smudge}
        \forall x, y \in \bbR^{n} 
        \: : \:
        \left| \smudge(x) - \smudge(y) \right| 
        \leq 
        \frac{R^{2}}{\rho}
        \| x - y \|
        .
    \end{align}
\end{theorem}
\begin{proof}
    It is sufficient to prove the theorem in the special case 
    \begin{gather}\label{math:lipschitz-of-scalar-functions:radius-condition}
        \rho < r := \inf_{ x \in \partial\Domain } \| x \|.
    \end{gather}
    Indeed, if the theorem is true under the assumption~\eqref{math:lipschitz-of-scalar-functions:radius-condition}, 
    then we may apply it with $\rho$ replaced by $(1-\epsilon)\rho$. 
    Since $\Domain$ is star-shaped with respect to $\Ball{(1-\epsilon)\rho}{0}$ and
    \begin{align*}
        (1-\epsilon) \rho < \inf_{ x \in \partial\Domain } \|x\|,
    \end{align*}
    we obtain, for every $\epsilon \in (0,1)$,
    \begin{align*}
        \forall x, y \in \bbR^{n} 
        \: : \:
        \left| \gauge(x) - \gauge(y) \right| \leq 
        \frac{1}{(1-\epsilon)\rho}
        \| x - y \|
        ,
        \\
        \forall x, y \in \bbR^{n} 
        \: : \:
        \left| \smudge(x) - \smudge(y) \right| 
        \leq 
        \frac{R^{2}}{(1-\epsilon)\rho}
        \| x - y \|
        .
    \end{align*}
    By a limit argument, this implies~\eqref{math:lipschitz-of-scalar-functions:gauge} and ~\eqref{math:lipschitz-of-scalar-functions:smudge}. 
    So let us assume~\eqref{math:lipschitz-of-scalar-functions:radius-condition} for the remainder of the proof.

    Let $x,y \in \bbR^{n}$ be distinct.
    If one of them is zero, say $x=0$, then we use $\smudge$ and $\gauge$ being positively homogeneous and find
    \begin{align*}
        \left| \gauge(y) - \gauge(0) \right|
        =
        \|y\| \gauge\left( \frac{y}{\|y\|} \right)
        \leq
        \frac{1}{\rho} \|y\|
        ,
        \qquad 
        \left| \smudge(y) - \smudge(0) \right|
        =
        \|y\| \smudge\left( \frac{y}{\|y\|} \right)
        \leq
        R \|y\|
        .
    \end{align*}
    Having proven the claim when one of the two points is zero,
    we consider the case that both $x$ and $y$ are non-zero.
    Let $\xyangle \in [0,\pi]$ be the angle between them.
    
    If $\xyangle = 0$, then $x$ and $y$ lie on the same ray, and positive homogeneity leads to 
    \begin{align*}
        \left| \smudge(x) - \smudge(y) \right|
        &\leq
        R \left| \|x\| - \|y\| \right| 
        \leq
        R \|x-y\|
,
        \\
        \left| \gauge(x) - \gauge(y) \right|
        &\leq 
        \frac{1}{\rho}
        \left| \|x\| - \|y\| \right| 
        \leq
        \frac{1}{\rho} \|x-y\|
        .
    \end{align*}
    If $\xyangle = \pi$, then the points lie on opposite rays, and 
    \begin{align*}
        \left| \smudge(x) - \smudge(y) \right|
        &
        \leq
        \left| \smudge(x) \right| + \left| \smudge(y) \right|
        \leq
        R \left\| x \right\| + R \left\| y \right\|
        =
        R
        \| x - y \|
,
        \\
        \left| \gauge(x) - \gauge(y) \right|
        &
        \leq
        \left| \gauge(x) \right| + \left| \gauge(y) \right|
        \leq
        \frac{1}{\rho} \left\| x \right\| + \frac{1}{\rho} \left\| y \right\|
        =
        \frac{1}{\rho}
        \| x - y \|
        .
    \end{align*}
    It remains to consider the case $0 < \xyangle < \pi$.
Let
    \begin{align*}
        \unitu := \frac{x}{\|x\|},
        \qquad
        \unitv := \frac{y}{\|y\|},
        \qquad 
        u := \smudge(\unitu) \unitu,
        \qquad 
        v := \smudge(\unitv) \unitv.
    \end{align*}
    By construction, $u, v \in \partial\Domain$. 
    Without loss of generality, $\|u\| \geq \|v\|$.
    
    Consider the triangle between $u$, $v$, and the origin. 
    Within that triangle, $\xyangle > 0$ is the angle at the origin, and we let $\beta > 0$ be the angle at $u$;
    see Figure~\ref{figure:lipschitz-of-scalar-functions}.
    Since $\overline{\Domain}$ is star-shaped with respect to a ball of radius $\rho$ around the origin, the convex hull of $u$ and $\overline{\Ball{\rho}{0}}$ has an angle $\gamma \in (0,\pi/2)$ at $u$ that satisfies
    \begin{gather*}
        \sin(\gamma) = \frac{\rho}{\|u\|},
    \end{gather*}
    therefore
    \begin{align*} 
        \frac{\rho}{R} \leq \sin(\gamma) \leq \frac{\rho}{r} < 1.
    \end{align*} 
    Thus $\gamma$ is bounded away from $\pi/2$, uniformly in the direction. 
    Let us assume for the time being that 
    \begin{gather}\label{math:lipschitz-of-scalar-functions:small-angle-condition}
         \xyangle \leq \pi/2 - \gamma.
    \end{gather}
    The assumption $\| u \| \geq \| v \|$ implies that $\beta \leq \pi/2$. 
    Also, $\gamma \leq \beta$ because the boundary point $v$ cannot lie within the interior of the convex hull of $u$ and $\overline{\Ball{\rho}{0}}$.

    Recall that for any $c, d \in \bbR$ there exist $a, b \in \bbR$ such that
    \begin{align*}
        \begin{pmatrix} a \\ b \end{pmatrix} \cdot \begin{pmatrix} 1 \\ 0 \end{pmatrix} = c,
        \quad 
        \begin{pmatrix} a \\ b \end{pmatrix} \cdot \begin{pmatrix} \cos(\xyangle) \\ \sin(\xyangle) \end{pmatrix} = d.
    \end{align*}
    
When $c = \gauge(\unitu) = \|u\|^{-1}$ and $d = \gauge(\unitv) = \|v\|^{-1}$, 
    then there exists $(a,b)^{t}$ being the gradient of a linear function on the subspace spanned by $u$ and $v$,
    and that linear function agrees with $\gauge$ on the union of semi-rays $[0,\infty) u \cup [0,\infty) v$. Specifically,
    \begin{align*}
        a = \frac{1}{\|u\|} = \frac{ \sin(\gamma) }{ \rho },
        \qquad 
        b
        =
        \frac{1}{\sin(\xyangle)} \left( \frac{1}{\|v\|} - \frac{1}{\|u\|} \cos(\xyangle) \right).
    \end{align*}
Since we assume $\|u\| \geq \|v\|$, it follows that $b \geq 0$. 
    Using Lemma~\ref{lemma:geometric-auxiliary} and the definition of $\gamma$, we derive
    \begin{align*}
        b 
        &=
        \frac{1}{\sin(\xyangle) \|u\|} \left( \frac{\|u\|}{\|v\|} - \cos(\xyangle) \right)
        \\
        &\leq
        \frac{1}{\sin(\xyangle) \|u\|} \left( \frac{\sin(\gamma+\xyangle)}{\sin(\gamma)} - \cos(\xyangle) \right)
        \\
        &=
        \frac{1}{\sin(\xyangle) \rho} \left( \sin(\gamma+\xyangle) - \cos(\xyangle) \sin(\gamma) \right)
        =
        \frac{\cos(\gamma)}{\rho}.
    \end{align*}
    It follows that
    \begin{gather*}
        a^{2} + b^{2} \leq \rho^{-2}. 
    \end{gather*}
    The norm of the gradient is the Lipschitz constant of the linear function and is at most $1/\rho$.
    That shows~\eqref{math:lipschitz-of-scalar-functions:gauge} whenever the angle satisfies the smallness condition~\eqref{math:lipschitz-of-scalar-functions:small-angle-condition}.
    
When $c = \smudge(\unitu) = \|u\|$ and $d = \smudge(\unitv) = \|v\|$, 
    then $(a,b)^{t}$ is the gradient of a linear function on the subspace spanned by $u$ and $v$, 
    and that linear function agrees with $\smudge$ on the union of semi-rays $[0,\infty) u \cup [0,\infty) v$.
    The coefficients satisfy
    \begin{align*}
        a = \|u\| = \frac{\rho}{\sin(\gamma)},
        \qquad 
        b 
        = 
        \frac{1}{\sin(\xyangle)} \left( \|v\| - \|u\| \cos(\xyangle) \right)
        .
    \end{align*}
    The norm of the gradient is the Lipschitz constant of the linear function.
    We find 
    \begin{align*}
        a^{2} + b^{2}
        &=
        \|u\|^{2}
        +
        \frac{1}{\sin(\xyangle)^{2}} \left( \|v\| - \|u\| \cos(\xyangle) \right)^{2}
        \\&=
        \frac{1}{\sin(\xyangle)^{2}} \left( \sin(\xyangle)^{2} \|u\|^{2} + \|v\|^{2} - 2\|v\| \cdot \|u\| \cos(\xyangle) + \|u\|^{2} \cos(\xyangle)^{2} \right)
        \\&=
        \frac{1}{\sin(\xyangle)^{2}} \left( \|u\|^{2} + \|v\|^{2} - 2\|v\| \cdot \|u\| \cos(\xyangle) \right)
        \\&=
        \frac{\left\| u - v \right\|^{2}}{\sin(\xyangle)^{2}} 
        .
    \end{align*}
    By the law of sines, and the definitions of $\beta$ and $\gamma$, we derive 
    \begin{align*}
        \sqrt{ a^{2} + b^{2} }
        =
        \frac{\left\| v \right\|}{\sin(\beta)} 
        \leq
        \frac{\left\| v \right\|}{\sin(\gamma)} 
        \leq
        \frac{R^{2}}{\rho}
        .
    \end{align*}
    That shows~\eqref{math:lipschitz-of-scalar-functions:smudge} whenever the angle is small as in~\eqref{math:lipschitz-of-scalar-functions:small-angle-condition}.
    
Finally, we remove the condition on the angle. 
    Let $x, y \in \bbR^{n} \setminus \{0\}$ be distinct and not collinear.
    Let $H$ be the two-dimensional subspace that contains $x$ and $y$.
    We have shown that $\bbR^{n} \setminus \{0\}$ is covered by a collection of open cones with positive aperture, 
    over each of which the desired Lipschitz conditions hold. 
    Whenever $0 < \xyangle < \pi$, then the straight line segment from $x$ to $y$ does not pass through the origin
    and is covered by a finite number of open sets on each of which the desired Lipschitz conditions hold. 
    Consequently, the Lipschitz bounds hold on all of $\bbR^{n}$.
    The proof is complete. 
\end{proof}

\begin{figure}[t]
    \centering 
    \begin{tikzpicture}[scale=2]
\coordinate [label=below left:$0$] (A) at (0,0);
        \coordinate [label=below right:$x$] (B) at (4,0);
        \coordinate [] (C) at (0.765366,1.84776);
        \coordinate [label=below right:$y$] (Y) at (3.5,2);

\draw[white] (A) -- ($(A)!0.5!(B)$) node[midway, below] {};
        \draw[white] (B) -- ($(B)!0.5!(C)$) node[midway, right] {};
        \draw[white] (C) -- ($(C)!0.5!(A)$) node[left] {$\rho$};
\draw[thick,dotted] (A) -- (B) -- (C) -- cycle;
        \draw[thick] (A) -- (B) -- (Y) -- cycle;
        \fill (0.865366,1.54776) circle (1pt);
\draw[thick] (A) +(0.5,0) arc[start angle=0,end angle=31.0,radius=0.5] node[midway,right,yshift=1pt] {$\xyangle$};
        \draw[thick,densely dotted] (B) +(-0.5,0) arc[start angle=180,end angle=150.5,radius=0.5] node[above,right,xshift=-15pt,yshift=-2pt] {$\gamma$}; 
        \draw[thick] (B) +(-1.0,0) arc[start angle=180,end angle=104.5,radius=1.0,dotted] node[above,right,xshift=-20pt] {$\beta$}; 
        \draw[thick,dotted] (C) +(247.5:0.5) arc [start angle=247.5, end angle=330.5, radius=0.5];
\draw[dashed] (A) ++(-10.0:2) arc[start angle=-10.0,end angle=90.5,radius=2];
    \end{tikzpicture}
    \caption{Illustration of the geometric situation in the proof of Theorem~\ref{theorem:lipschitz-of-scalar-functions}. The angle $\gamma$ is the angle of the convex cone spanned by $x$ and the interior ball $\Ball{\rho}{0}$.} \label{figure:lipschitz-of-scalar-functions}
\end{figure}

\begin{remark}
    Theorem~\ref{theorem:magnitude-functions-are-lipschitz} translates into a Lipschitz estimate for the expansion function on the unit sphere. 
    On the unit sphere we get a tighter estimate than on the whole space
    because only the tangential part of the gradient enters the estimate on the unit sphere.  
\end{remark}

The expansion function is not defined on unbounded star domains: a reasonable generalization would require it to assume infinite values. 
However, the gauge function is defined even on unbounded (centered) star domains.
For completeness, we show 
that our Lipschitz estimate extends to the gauge functions of unbounded centered star domains as well.

\begin{proposition}\label{proposition:lipschitz-of-gauge-function-when-unbounded}
    Let $\Domain$ be a star domain that is star-shaped with respect to the ball $\Ball{\rho}{0}$.
    Then the gauge function $\gauge$ satisfies 
    \begin{align}\label{math:lipschitz-of-gauge-function-when-unbounded}
        \forall x, y \in \bbR^{n} 
        \: : \:
        \left| \gauge(x) - \gauge(y) \right| 
        \leq 
        \frac{1}{\rho}
        \| x - y \|
        .
    \end{align}
\end{proposition}
\begin{proof}
    We define the sequence of domains $\Domain_m := \Domain \cap \Ball{m \rho}{0}$
    and write $\gauge_m$ for the corresponding sequence of gauge functions.
    These domains are star-shaped with respect to $\Domain_1 = \Ball{\rho}{0}$,
    and their gauge functions $\gauge_m$ have the common Lipschitz constant $\rho^{-1}$ as per Theorem~\ref{theorem:lipschitz-of-scalar-functions}.
    
    Let $\unitvector \in \unitsphere{n-1}$ be a unit vector. 
    If $\gauge(\unitvector) > 0$, then $(0,\infty) \unitvector$ crosses the boundary at $t \unitvector$ for some unique $t > 0$,
    and we conclude $\gauge(\unitvector) = \gauge_{m}(\unitvector)$ for any $m \rho > t$.
    If $\gauge(\unitvector) = 0$, then $(0,\infty) \unitvector$ cannot touch the boundary and lies within $\Domain$,
    and we conclude that $\gauge_{m}(\unitvector) = ( m \rho )^{-1}$ converges to zero.
    By positive homogeneity, the $\gauge_{m}$ converge pointwise to $\gauge$.
    
    If a sequence of uniformly Lipschitz functions converges pointwise to a function, 
    then the limit function satisfies the same Lipschitz bound. 
    Thus,~\eqref{math:lipschitz-of-gauge-function-when-unbounded} follows.
\end{proof}

\begin{remark}\label{remark:beer_improvement}
    Beer showed that the gauge function of star domains has Lipschitz constant at most $2/\rho$,
    where $\rho > 0$ is the radius of a ball in the convex kernel of the domain. 
    Theorem~\ref{theorem:lipschitz-of-scalar-functions} improves that estimate by a factor of two.
\end{remark}

The gauge function encodes whether the set is bounded or unbounded.

\begin{lemma}\label{lemma:gauge-detects-boundedness}
    Let $\Domain$ be a domain star-shaped with respect to the origin and with gauge function $\gauge$. 
    Then $\Domain$ is bounded if and only if $\gauge$ has a positive infimum on the unit sphere. 

    If $\Domain$ is a centered star domain, 
    then $\Domain$ is bounded if and only if $\gauge$ is positive on nonzero points.
\end{lemma}
\begin{proof}
    Suppose that $\gauge$ has an infimum $s > 0$.
    Then $t \unitvector \notin \Domain$ for all unit vectors $\unitvector$ and $t > \tfrac{2}{s}$, hence $\Domain$ is bounded. 
    If no positive infimum exists, then there exists a sequence of unit vectors $\unitvector_{m}$ such that $\gauge(\unitvector_{m}) < 1/m$. 
    Then $\tfrac{m}{2} \unitvector_{m} \in \Domain$, which shows that $\Domain$ is unbounded. 
    
    Consider the special case that $\Domain$ is a centered star domain. 
    Since $\gauge$ is continuous (Proposition~\ref{proposition:lipschitz-of-gauge-function-when-unbounded}), it attains a non-negative minimum on the (compact) unit sphere. 
    This means that $\gauge$ is positive on the unit sphere if and only if its minimum is positive, which is the case if and only if $\Domain$ is bounded.
    
    Finally, $\gauge$ is positive on the unit sphere if and only if it is positive on $\bbR^{n} \setminus \{0\}$.
\end{proof}

\subsection{Bi-Lipschitz parameterizations}

We have discussed scalar-valued positively homogeneous functions. 
Next, we study vector-valued positively homogeneous functions associated with star domains.

A function $f : \bbR^n \to \calX$ taking values in a normed space $\calX$ is called \emph{positively homogeneous} 
if for all $x \in \bbR^n$ and for all $t \in [0,\infty)$ we have 
\begin{align*}
    f(tx) = t f(x).
\end{align*}

As seen earlier in Lemma~\ref{lemma:vrecica-via-hilbert}, we can parameterize the boundary of star domains over the unit sphere. 
We extend this to parameterizing the entire star domain over the open unit ball. 

We now define two positively homogeneous transformations of Euclidean space. 
Suppose that $\Domain$ is a bounded centered star domain. 
On the one hand, 
we define the vector \emph{gauge transformation}
\begin{align*}
    \Fauge : \bbR^n \to \bbR^n, \quad x \mapsto \gauge\left( \frac{x}{\|x\|} \right) x.
\end{align*}
On the other hand, 
we define the \emph{expansion transformation}
\begin{align*}
    \Smudge : \bbR^n \to \bbR^n, \quad x \mapsto \smudge\left( \frac{x}{\|x\|} \right) x.
\end{align*} 
These two mappings are understood to be zero at the origin. 
We study the mapping properties and Lipschitz constants of these positively homogeneous functions.
The first and most important observation is that they are mutual inverses of each other.

\begin{lemma}\label{lemma:fauge-and-smudge-are-inverse}
    Let $\Domain$ be a bounded centered star domain.
    Then $\Fauge$ and $\Smudge$ are invertible with 
    \begin{align*}
        \Fauge^{-1} = \Smudge.
    \end{align*}
\end{lemma}
\begin{proof}
    Both $\Fauge$ and $\Smudge$ are bijective because on each ray starting from the origin they act as a multiplication by a positive number. 
    
    Suppose that $\unitx \in \unitsphere{n-1}$. 
    We know that $x = \Smudge(\unitx)$ is the positive multiple of $\unitx$ that lies on $\partial\Domain$.
    Since $\gauge$ equals one along $\partial\Domain$, it follows that $\Fauge(x) = \gauge(\unitx) x = \unitx$.
    We conclude that 
    $\Fauge(\Smudge(\unitx)) = \unitx$.
    Because both mappings are positively homogeneous, 
    $\Fauge( \Smudge( t \unitx) ) = t \Fauge( \Smudge( \unitx) ) = t \unitx$ holds for all $t \geq 0$. 
    
    This shows $\Fauge(\Smudge(x)) = x$ for all $x \in \bbR^{n}$.
    Since both mappings are bijective, 
    $\Smudge(\Fauge(x)) = x$ for all $x \in \bbR^{n}$.
    The result is proven.
\end{proof}

\begin{lemma}
    Let $\Domain$ be a bounded centered star domain.
    Then 
    \begin{gather*}
        \Smudge\left( \Ball{1}{0} \right) = \Domain,
        \qquad 
        \Fauge\left( \Domain \right) = \Ball{1}{0}.
    \end{gather*}
\end{lemma}
\begin{proof}
    Let $x \in \Domain$. Then $x = t z$ for some $t \in (0,1)$ and $z \in \partial\Domain$.
    Since $z = \Smudge(\unitz)$ with $\unitz = z / \|z\|$, positive homogeneity shows $x = t z = t \Smudge( \unitz ) = \Smudge( t \unitz )$.
    Having proven the first identity, Lemma~\ref{lemma:fauge-and-smudge-are-inverse} yields the second identity. 
    The proof is complete. 
\end{proof}

Our next objective is to estimate the Lipschitz constants of these transformations. 
We embed this into a broader study of the Jacobians of these transformations and their singular values. 
In this context, we recall Rademacher's theorem: 
any Lipschitz-continuous function on Euclidean space is differentiable almost everywhere;
moreover, its Lipschitz constant equals the essential supremum of the operator norm of its total derivative, almost everywhere.

We freely use an analogue of that theorem for functions defined on the sphere: 
any Lipschitz-continuous mapping defined on the unit sphere is differentiable almost everywhere (with respect to the surface measure).
In particular, its surface derivative exists almost everywhere. 
Its Lipschitz constant with respect to the spherical metric 
equals 
the essential supremum of the operator norm of its surface total derivative, almost everywhere.

\begin{theorem}\label{theorem:star-domains-are-lipschitz-ball:general-singular-values:smudge}
    Let $\Domain$ be a domain contained in the ball $\Ball{R}{0}$,
    containing the ball $\Ball{r}{0}$,
    and star-shaped with respect to the ball $\Ball{\rho}{0}$.
    Assume $\rho \leq r \leq R$. 
    
    Then $\Smudge$ is bi-Lipschitz.
    Letting $\nabla_{S} \smudge$ denote the surface gradient of $\smudge$ over the unit sphere, 
    the Jacobian almost everywhere has the form
    \begin{gather*}
        \Jacobian \Smudge(x) = \smudge\left( \unitx \right) \Id_{n} + \unitx \otimes \nabla_{S}\smudge\left( \unitx \right).
    \end{gather*} 
    At almost every $x \in \bbR^{n}$, letting $\unitx = \tfrac{x}{\|x\|}$, the singular values of the Jacobian $\Jacobian\Smudge(x)$ are 
    \begin{align*}
        \sigma_{1}^{\Smudge}(x) &= \frac{1}{2} \left( \sqrt{ 4 \smudge\left(\unitx\right)^{2} + \left\|\nabla_{S}\smudge\left(\unitx\right)\right\|^{2}} + \left\|\nabla_{S}\smudge\left(\unitx\right)\right\| \right),
        \\
        \sigma_{2}^{\Smudge}(x) &= \dots = \sigma_{n-1}^{\Smudge}(x) = \smudge\left(\unitx\right),
        \\
        \sigma_{n}^{\Smudge}(x) &= \frac{1}{2} \left( \sqrt{ 4 \smudge\left(\unitx\right)^{2} + \left\|\nabla_{S}\smudge\left(\unitx\right)\right\|^{2}} - \left\|\nabla_{S}\smudge\left(\unitx\right)\right\| \right).
    \end{align*}
    In particular, the determinant of the Jacobian of $\Smudge$ almost everywhere equals 
    \begin{gather*}
        \det\Jacobian\Smudge(x)
        =
        \smudge\left(\unitx\right)^{n}
        .
    \end{gather*}
\end{theorem}

\begin{proof}
    The function $\smudge$ is Lipschitz on the unit sphere. The spherical gradient
    \begin{gather*}
        \nabla_{S} \smudge : \unitsphere{n-1} \to \bbR^{n}
    \end{gather*}
    exists almost everywhere with respect to the surface measure on the unit sphere. 
    The magnitude of the spherical gradient is essentially bounded by the Lipschitz constant of $\smudge$.
    By Theorem~\ref{theorem:magnitude-functions-are-lipschitz}, for almost all $\unitx$ on the unit sphere, 
    \begin{gather*}
        \| \nabla_{S} \smudge(\unitx) \| \leq \frac{R}{\rho} \sqrt{ R^{2} - \rho^{2} }.
    \end{gather*}
    If $\smudge(\unitx)=R$, the refined estimate reduces to the global estimate already obtained. 
    Hence it remains to consider $\smudge(\unitx)<R$.
    Suppose that $\smudge$ has a spherical gradient at $\unitx \in \unitsphere{n-1}$ and that $\smudge(\unitx) < R$.
    For any $\epsilon > 0$, there exists an open neighborhood of $\unitx$
    where the expansion function of $\Domain$ is identical to the expansion function of $\Domain \cap \Ball{ \smudge(\unitx) + \epsilon }{0}$.
    Consequently,
    \begin{gather*}
        \| \nabla_{S} \smudge(\unitx) \| \leq \frac{ \smudge(\unitx)+\epsilon}{\rho} \sqrt{ (\smudge(\unitx)+\epsilon)^{2} - \rho^{2} }.
    \end{gather*}
    Since $\epsilon > 0$ was arbitrary, we can take the limit. 
    We conclude that for almost all $\unitx \in \unitsphere{n-1}$ the spherical gradient of $\smudge$ exists and satisfies 
    \begin{gather}\label{math:upper-bound-on-spherical-gradient}
        \| \nabla_{S} \smudge(\unitx) \| \leq \frac{\smudge(\unitx)}{\rho} \sqrt{ \smudge(\unitx)^{2} - \rho^{2} }.
    \end{gather}
    This estimates the magnitude of the spherical gradients of $\smudge$ almost everywhere.
    
    We have a locally Lipschitz and hence differentiable almost everywhere function 
    \begin{align*}
        \Smudge(x) = \smudge\left( \frac{x}{\|x\|} \right) x.
    \end{align*}
    Suppose that $\Smudge$ is differentiable at $x \in \bbR^{n} \setminus \{0\}$.
    Write $\unitx := x / \|x\|$. Then 
    \begin{gather*}
        \Jacobian \Smudge(x) \cdot \unitx = \smudge\left( \unitx \right) \unitx.
    \end{gather*}
    Suppose that $\unitu$ is a unit vector orthogonal to $\unitx$.
    Then
    \begin{gather*}
        \Jacobian \Smudge(x) \cdot \unitu = \smudge\left( \unitx \right) \unitu + ( \nabla_{S}\smudge\left( \unitx \right) \cdot \unitu ) \unitx.
    \end{gather*}
    We conclude that singular values of $\Jacobian\Smudge(x)$ agree with the singular values of the matrix 
    \begin{gather*}
        a \begin{pmatrix} 1 & w \\ 0 & \Id_{n-1} \end{pmatrix},
        \quad 
        a = \smudge\left( \unitx \right),
        \quad 
        w 
        = 
        \smudge\left( \unitx \right)^{-1} \nabla_{S}\smudge\left( \unitx \right) 
        = 
        \nabla_{S} \ln \smudge\left( \unitx \right) 
        .
    \end{gather*}
    Obviously, 
    \[
        \det \Jacobian\Smudge(x) = a^{n}.
    \]
    Write $b = \|w\|$. 
    The eigenvalues $\lambda_1, \lambda_2$ of the matrix 
    \begin{gather*}
        \begin{pmatrix} 1 & \|w\| \\ \|w\| & 1 + \|w\|^{2} \end{pmatrix} 
        =
        \begin{pmatrix} 1 &     0 \\ \|w\| & 1 \end{pmatrix}
        \begin{pmatrix} 1 & \|w\| \\     0 & 1 \end{pmatrix}
    \end{gather*}
    are directly computed to be 
    \begin{align*}
        \lambda_{1} 
        &= 
        \frac{ 2 + \|w\|^{2} + \|w\| \sqrt{ 4 + \|w\|^{2} } }{2} 
        = 
        \frac{1}{4} \left( \sqrt{ \|w\|^{2} + 4 } + \|w\| \right)^{2}
        ,
        \\
        \lambda_{2} 
        &= 
        \frac{ 2 + \|w\|^{2} - \|w\| \sqrt{ 4 + \|w\|^{2} } }{2} 
        = 
        \frac{1}{4} \left( \sqrt{ \|w\|^{2} + 4 } - \|w\| \right)^{2}
        .
    \end{align*}
    Consequently, the singular values of $\Jacobian\Smudge(x)$ are 
    \begin{align*}
        \sigma_1 = a \frac{ \sqrt{ 4 + \|w\|^{2}} + \|w\| }{2},
        \\
        \sigma_{i} = a, \quad 2 \leq i \leq n-1,
        \\
        \sigma_n = a \frac{ \sqrt{ 4 + \|w\|^{2}} - \|w\| }{2}.
    \end{align*}
    In particular, 
    \begin{align*}
        \sigma_1 = \frac{\sqrt{ 4 a^{2} + \|\nabla_{S}\smudge(\unitx)\|^{2}} + \|\nabla_{S}\smudge(\unitx)\| }{2},
        \\
        \sigma_n = \frac{\sqrt{ 4 a^{2} + \|\nabla_{S}\smudge(\unitx)\|^{2}} - \|\nabla_{S}\smudge(\unitx)\| }{2}.
    \end{align*}
    Note that $\sigma_1$ is increasing in $\|\nabla_{S}\smudge(\unitx)\|$ whilst $\sigma_n$ is decreasing in $\|\nabla_{S}\smudge(\unitx)\|$. 
    This completes the proof. 
\end{proof}

The gauge transformation is the inverse of the expansion transformation. 
Accordingly, we can determine the singular values of its Jacobian almost everywhere. 

\begin{theorem}\label{theorem:star-domains-are-lipschitz-ball:general-singular-values:gauge}
    Let $\Domain$ be a domain contained in the ball $\Ball{R}{0}$,
    containing the ball $\Ball{r}{0}$,
    and star-shaped with respect to the ball $\Ball{\rho}{0}$.
    
    Then $\Fauge$ is bi-Lipschitz.
    Letting $\nabla_{S} \gauge$ denote the surface gradient of $\gauge$ over the unit sphere, 
    at almost every $x \in \bbR^{n}$, letting $\unitx = \tfrac{x}{\|x\|}$, the singular values of the Jacobian $\Jacobian\Fauge(x)$ are 
    \begin{align*}
        \sigma_{1}^{\Fauge}(x) 
        &= 
        \frac{1}{2} 
        \left( 
            \sqrt{ 
                4 \gauge\left(\unitx\right)^{2} 
                + 
                \left\|\nabla_{S}\gauge\left(\unitx\right)\right\|^{2}
            } 
            +    
            \left\|\nabla_{S}\gauge\left(\unitx\right)\right\| 
        \right)
        ,
        \\
        \sigma_{2}^{\Fauge}(x) &= \dots = \sigma_{n-1}^{\Fauge}(x) = \gauge\left(\unitx\right) 
        ,
        \\
        \sigma_{n}^{\Fauge}(x) 
        &= 
        \frac{1}{2} 
        \left( 
            \sqrt{ 
                4 \gauge\left(\unitx\right)^{2} 
                + 
                \left\|\nabla_{S}\gauge\left(\unitx\right)\right\|^{2}
            } 
            - 
            \left\|\nabla_{S}\gauge\left(\unitx\right)\right\| 
        \right)
        .
    \end{align*}
    In particular, the determinant of the Jacobian of $\Fauge$ almost everywhere equals 
    \begin{gather*}
        \det\Jacobian\Fauge(x)
        =
        \gauge\left(\unitx\right)^{n}
        .
    \end{gather*}
\end{theorem}
\begin{proof}
    We know that $\Fauge = \Smudge^{-1}$ on $\bbR^{n}$, and that $\gauge = \tfrac{1}{\smudge}$ on the unit sphere. 
    Therefore, the Jacobian $\Jacobian\Fauge$ exists for almost every $x \in \bbR^{n}$ with 
    \[
        \Jacobian\Fauge(x) = \Jacobian\Smudge(\Fauge(x))^{-1}.
    \]
    Its singular values satisfy 
    \begin{gather*}
        \sigma^{\Fauge}_{i}(x) = \sigma^{\Smudge}_{n-i+1}(\Fauge(x))^{-1}, \qquad 1 \leq i \leq n.
    \end{gather*}
    Since $\Fauge$ preserves rays, $\Fauge(x)$ and $x$ have the same direction. 
    Thus the formulas from Theorem~\ref{theorem:star-domains-are-lipschitz-ball:general-singular-values:smudge} can be evaluated using the same unit vector $\unitx = x / \|x\|$.
    We already know 
    \[
        \sigma^{\Fauge}_{i}(x) = \sigma^{\Smudge}_{n-i+1}(x)^{-1} = a^{-1}, \quad 2 \leq i \leq n-1.
    \]
    It only remains to study the extremal singular values. 
    Abbreviate $\unitx := x / \|x\|$. 
    Notice that 
    \begin{gather*}
        \sigma^{\Smudge}_{1}(x) \sigma^{\Smudge}_{n}(x) 
        = 
        \smudge\left( \unitx \right)^{2} 
        = 
        \gauge\left( \unitx \right)^{-2}
        .
    \end{gather*}
    Recall that $\nabla_{S}\gauge(\unitx) = - \smudge(\unitx)^{-2} \nabla_{S}\smudge(\unitx)$. 
    Therefore, 
    \begin{align*}
        \sigma_{1}^{\Fauge}(x) 
        &
        = 
        \frac{1}{2} \gauge\left(\unitx\right)^{2}  
        \left( 
            \sqrt{ 
                4 \gauge\left(\unitx\right)^{-2} 
                + 
                \gauge\left(\unitx\right)^{-4} 
                \left\|\nabla_{S}\gauge\left(\unitx\right)\right\|^{2}
            } 
            + 
            \gauge\left(\unitx\right)^{-2} \left\|\nabla_{S}\gauge\left(\unitx\right)\right\| 
        \right)
        \\&
        = 
        \frac{1}{2} 
        \left( 
            \sqrt{ 
                4 \gauge\left(\unitx\right)^{2} 
                + 
                \left\|\nabla_{S}\gauge\left(\unitx\right)\right\|^{2}
            } 
            + 
            \left\|\nabla_{S}\gauge\left(\unitx\right)\right\| 
        \right)
        .
    \end{align*}
    Similarly, 
    \begin{align*}
        \sigma_{n}^{\Fauge}(x) 
        &
        = 
        \frac{1}{2} 
        \left( 
            \sqrt{ 
                4 \gauge\left(\unitx\right)^{2} 
                + 
                \left\|\nabla_{S}\gauge\left(\unitx\right)\right\|^{2}
            } 
            - 
            \left\|\nabla_{S}\gauge\left(\unitx\right)\right\| 
        \right)
        .
    \end{align*}
    The desired result follows immediately.
\end{proof}

These are exact formulas for the singular values of the Jacobians of the expansion transformation $\Smudge$ and the gauge transformation $\Fauge$.
We are interested in estimates for these singular values in terms of some geometric parameters. 
For a centered star domain, 
these geometric parameters measure the maximum and minimum magnitude of the boundary set, 
and the eccentricity of the domain. 

\begin{theorem}\label{theorem:star-domains-are-lipschitz-ball:specific-singular-values:smudge}
    Let $\Domain$ be a domain contained in the ball $\Ball{R}{0}$,
    containing the ball $\Ball{r}{0}$,
    and star-shaped with respect to the ball $\Ball{\rho}{0}$.
    Assume $\rho \leq r \leq R$. 
    
    For all $x,y \in \bbR^{n}$ we have
    \begin{align}\label{math:star-domains-are-lipschitz-ball:smudge}
        c_{\Smudge}(\rho,r,R) \|x-y\|
        \leq
        \|\Smudge(x)-\Smudge(y)\| 
        \leq
        C_{\Smudge}(\rho,R) \|x-y\|
        ,
    \end{align}
    where 
    \begin{align*}
        C_{\Smudge}(\rho,R) 
        &:=
        \frac{R}{2\rho}
        \left(
            \sqrt{R^{2}+3\rho^{2}} + \sqrt{R^{2}-\rho^{2}}
        \right),
        \\
        c_{\Smudge}(\rho,r,R)
        &:=
        \min_{a \in [r,R]} 
        \frac{a}{2\rho}
        \left(
            \sqrt{a^{2}+3\rho^{2}} - \sqrt{a^{2}-\rho^{2}}
        \right).
    \end{align*}
    Furthermore,
    \begin{align*}
        c_{\Smudge}(\rho,r,R)
        =
        \begin{cases}
            \dfrac{r}{2\rho}
            \left(
                \sqrt{r^{2}+3\rho^{2}} - \sqrt{r^{2}-\rho^{2}}
            \right)
            & 
            \text{ if } \sqrt{\tfrac32} \rho \leq r
            ,
            \\[1.25ex]
            \dfrac{\sqrt{3}}{2} \rho
            & 
            \text{ if } r \leq \sqrt{\tfrac32} \rho \leq R
            ,
            \\[1.25ex]
            \dfrac{R}{2\rho}
            \left(
                \sqrt{R^{2}+3\rho^{2}} - \sqrt{R^{2}-\rho^{2}}
            \right)
            & 
            \text{ if } R \leq \sqrt{\tfrac32} \rho
            .
        \end{cases}
    \end{align*}
\end{theorem}

\begin{proof}
    This proof is to be read as a continuation of the proof of Theorem~\ref{theorem:star-domains-are-lipschitz-ball:general-singular-values:smudge}.
    Let $x \in \bbR^{n}$ such that the Jacobian $\Jacobian\Smudge(x)$ exists and write
    \[
        \unitx = \frac{x}{\|x\|}, \qquad a := \smudge\left( \unitx \right) \in [r,R].
    \]
    Using our upper bound on the magnitude of the spherical derivative~\eqref{math:upper-bound-on-spherical-gradient},
    we find 
    \begin{align*}
        \sigma_1^{\Smudge}(x)
        &
        \leq 
\frac{a}{2\rho} \left( \sqrt{ a^{2} + 3 \rho^{2} } + \sqrt{ a^{2} - \rho^{2} } \right)
        ,
        \\
        \sigma_n^{\Smudge}(x)
        &
        \geq 
\frac{a}{2\rho} \left( \sqrt{ a^{2} + 3 \rho^{2} } - \sqrt{ a^{2} - \rho^{2} } \right)
        .
    \end{align*}
    The upper bound for $\sigma_1^{\Smudge}(x)$ is increasing in $a$, and therefore 
    \begin{gather*}
        \sigma_1^{\Smudge}(x)
        \leq 
        \frac{R}{2\rho} \left( \sqrt{ R^{2} + 3 \rho^{2} } + \sqrt{ R^{2} - \rho^{2} } \right).
    \end{gather*}
    Since $\Smudge$ is locally Lipschitz on $\bbR^{n}$, 
    an essential upper bound on its derivative equals a bound on its Lipschitz constants. 
    Therefore, the upper estimate follows.
    
    The lower bound for $\sigma_n^{\Smudge}(x)$ is the function 
    \begin{gather*}
        f_{\rho}(a) = \frac{a}{2\rho} \left( \sqrt{ a^{2} + 3 \rho^{2} } - \sqrt{ a^{2} - \rho^{2} } \right)
        ,
    \end{gather*}
    which is continuous in $a \in [\rho,\infty)$ and smooth on $(\rho,\infty)$. 
    Its derivative is 
    \begin{align*}
        f_{\rho}'(a) 
        &
        = 
        \frac{1}{2\rho} \left( \sqrt{ a^{2} + 3 \rho^{2} } - \sqrt{ a^{2} - \rho^{2} } \right)
        +
        \frac{a^{2}}{2\rho} \left( ( a^{2} + 3 \rho^{2} )^{-\frac 1 2} - ( a^{2} - \rho^{2} )^{-\frac 1 2} \right)
        .
    \end{align*}
    We multiply it with 
    \begin{gather*}
        g(a) 
        = 
        \sqrt{ a^{2} + 3 \rho^{2} } \cdot \sqrt{ a^{2} - \rho^{2} } 
        = 
        \sqrt{ a^{4} + 2 a^{2} \rho^{2} - 3 \rho^{4} },
    \end{gather*}
    and find that $f_{\rho}'(a) = 0$ if and only if 
    \begin{align*}
        &
        g(a) \left( \sqrt{ a^{2} + 3 \rho^{2} } - \sqrt{ a^{2} - \rho^{2} } \right)
        +
        a^{2} \left( \sqrt{ a^{2} - \rho^{2} } - \sqrt{ a^{2} + 3 \rho^{2} } \right)
        \\&
        \qquad\qquad
        =
        \left( \sqrt{ a^{2} + 3 \rho^{2} } - \sqrt{ a^{2} - \rho^{2} } \right)
        ( g(a) - a^{2} ) = 0.
    \end{align*}
    In the last product, the first factor is non-zero. 
    As for the second factor, 
    \begin{align*}
        g(a)^{2} - a^{4}
        &
        = 
        a^{4} + 2 a^{2} \rho^{2} - 3 \rho^{4} - a^{4} 
        = 
        \rho^{2} \left( 2 a^{2} - 3 \right)
        .
    \end{align*}
    Thus, $g(a) - a^{2} = 0$ if and only if $3\rho^{2} = 2a^{2}$. 
    Therefore, the lower bound $f_{\rho}(a)$ has a single critical point for any given $\rho > 0$ at 
    \begin{gather*}
        a^{\ast} = \sqrt{\frac{3}{2}} \rho.
    \end{gather*}
    Observe that 
    \begin{gather*}
        f_{\rho}(a) 
        = 
        \frac{a}{2\rho} 
        \left( 
            \frac{
                ( a^{2} + 3 \rho^{2} ) - ( a^{2} - \rho^{2} ) 
            }{
                \sqrt{ a^{2} + 3 \rho^{2} } + \sqrt{ a^{2} - \rho^{2} } 
            }
        \right)
        = 
        \frac{\rho}{2}
        \left( 
            \frac{
                4
            }{
                \sqrt{ 1 + 3 \rho^{2} a^{-2} } + \sqrt{ 1 - \rho^{2} a^{-2} } 
            }
        \right)
        .
    \end{gather*}
    Therefore, 
    \begin{gather*}
        f_{\rho}(\rho) = \rho, 
        \qquad 
        f_{\rho}(a^{\ast}) 
        = 
        \frac{\sqrt 3}{2} \rho, 
        \qquad 
        \lim_{ a \to \infty } f_{\rho}(a) = \rho.
    \end{gather*}
    We conclude that $f_{\rho}$ assumes its global minimum at $a^{\ast}$.
    
    Taking into account that $a \in [r,R]$, the desired claim follows.
\end{proof}

\section{Analysis of the de~Rham complex}\label{section:analysis}

We review notions of de~Rham complexes of differential forms with coefficients in Lebesgue spaces. 
The reader is assumed to be familiar with basic functional analysis, including the theory of linear operators in Hilbert spaces~\cite{reed1972methods,werner2018funktionalanalysis}, and exterior calculus~\cite{lee2012smooth}.
Much of the material outlined in this section can be found in the literature on Hilbert complexes~\cite{arnold2006finite,arnold2010finite,holst2012geometric}.

\subsection{Exterior algebra}

Let $\Alt{k}(\bbR^{n})$ be the vector space of alternating $k$-linear forms on $\bbR^{n}$. 
For example, $\Alt{1}(\bbR^{n})$ is the dual space of $\bbR^{n}$, and $\Alt{0}(\bbR^{n}) = \bbR$. 
We follow the common convention to define $\Alt{k}(\bbR^{n}) = 0$ outside of $0 \leq k \leq n$.

Given two alternating forms $u_1 \in \Alt{k}(\bbR^n)$ and $u_2 \in \Alt{l}(\bbR^n)$,
we write $u_1 \wedge u_2 \in \Alt{k+l}(\bbR^n)$ for their \emph{exterior product}.
The exterior product is bilinear, and it is graded-commutative and associative:
\begin{gather*}
    u_1 \wedge u_2 = (-1)^{kl} u_2 \wedge u_1, 
    \qquad 
    u_1 \in \Alt{k}(\bbR^n), \quad u_2 \in \Alt{l}(\bbR^n)
    ,
    \\
    ( u_1 \wedge u_2 ) \wedge u_3 = u_1 \wedge ( u_2 \wedge u_3 )
    \qquad 
    u_1 \in \Alt{k}(\bbR^n), \quad u_2 \in \Alt{l}(\bbR^n), \quad u_3 \in \Alt{m}(\bbR^n)
    .
\end{gather*}
Given a vector $a \in \bbR^{n}$ and an alternating $k$-linear form $u \in \Alt{k}(\bbR^{n})$,
we write $a \contract u$ for their \emph{interior product}, also known as \emph{contraction}, which is defined by 
\begin{gather*}
    (a \contract u)( v_1,\dots,v_{k-1}) := u( a, v_1,\dots,v_{k-1} ), \qquad v_1,\dots,v_{k-1} \in \bbR^{n}.
\end{gather*}
The interior product is anti-commutative:
\begin{gather*}
    a_1 \contract ( a_2 \contract u )
    =
    - a_2 \contract ( a_1 \contract u )
    ,
    \qquad 
    a_1, a_2 \in \bbR^{n}, \quad u \in \Alt{k}(\bbR^n)
    .
\end{gather*}
Recall that the standard basis of $\bbR^{n}$ gives rise to a basis of the dual space $\Alt{1}(\bbR^{n})$.
That basis is written $\cartanx^{1}, \dots, \cartanx^{n} \in \Alt{1}(\bbR^{n})$. 
We have a corresponding basis of $\Alt{k}(\bbR^{n})$ given by 
\begin{gather*}
    \left\{ \cartanx^{i_1} \wedge \cartanx^{i_2} \wedge \dots \wedge \cartanx^{i_k} \suchthat 1 \leq i_1 < i_2 < \dots < i_k \leq n \right\}.
\end{gather*}
We follow the common convention that this equals $\emptyset$ unless $0 \leq k \leq n$.
Also, in the case $k=0$ we have $\Alt{0}(\bbR^{n}) = \bbR$ and the basis only consists of the constant function $1$. 
In particular, any $u \in \Alt{k}(\bbR^n)$ has the form 
\begin{gather}\label{math:standard-representation}
    u = \sum_{1 \leq i_1 < \dots < i_k \leq n} u_{i_1,\dots,i_k} \cartanx^{i_1} \wedge \cartanx^{i_2} \wedge \dots \wedge \cartanx^{i_k},
    \qquad 
    u_{i_1,\dots,i_k} \in \bbR.
\end{gather}
Here, the coefficients satisfy 
\begin{gather*}
    u_{i_1,\dots,i_k} = u( e_{i_1}, \dots, e_{i_k} ).
\end{gather*}
We define the \emph{Euclidean inner product} on $\Alt{k}(\bbR^n)$ as 
\begin{gather*}
    \langle u, v \rangle 
    = 
    \sum_{1 \leq i_1 < \dots < i_k \leq n}
    u_{i_1,\dots,i_k} v_{i_1,\dots,i_k}
    .
\end{gather*}
The corresponding \emph{Euclidean norm} $\|\cdot\|$ is characterized by 
\begin{gather*}
    \| u \|^{2}
    = 
    \sum_{1 \leq i_1 < \dots < i_k \leq n}
    u_{i_1,\dots,i_k}^{2}
    .
\end{gather*}
In what follows, we may use the \emph{musical isomorphisms}, 
which are the mutually inverse isometries 
\begin{gather*}
    \sharp : \Alt{1}(\bbR^{n}) \to \bbR^{n}, \quad \sum_{i=1}^{n} v_i \cartanx^{i} \mapsto \sum_{i=1}^{n} v_i e_{i},
    \\
    \flat  : \bbR^{n} \to \Alt{1}(\bbR^{n}), \quad \sum_{i=1}^{n} a_i e_{i} \mapsto \sum_{i=1}^{n} a_i \cartanx^{i}.
\end{gather*}
The exterior product and the interior product satisfy a duality relationship with respect to the Euclidean inner product 
\begin{gather*}
    \langle a \contract u_1, u_2 \rangle = \langle u_1, a^{\flat} \wedge u_2 \rangle,
    \qquad 
    a \in \bbR^{n}, \quad u_1 \in \Alt{k+1}(\bbR^n), \quad u_2 \in \Alt{k}(\bbR^n).
\end{gather*}
Furthermore, the following inequalities are known:
\begin{gather*}
    \| a \contract u \| \leq \|a\| \cdot \|u\|, \quad a \in \bbR^{n}, \quad u \in \Alt{k}(\bbR^{n}).
    \\
    \| u_1 \wedge u_2 \| \leq C_{k,l} \|u_1\| \cdot \|u_2\|, \qquad u_1 \in \Alt{k}(\bbR^{n}), \quad u_2 \in \Alt{l}(\bbR^{n}),
\end{gather*}
where $C_{k,l} > 0$ is a numerical constant~\cite[Section~1.7.5]{federer}, which is known to satisfy
\[
    C_{k,l} \leq \sqrt{ \tbinom{k+l}{k} },
    \quad
    C_{k,n-k} = 1,
    \quad
    C_{k,0} = 1.
\]
The \emph{Hodge star operator} $\star : \Alt{k}(\bbR^n) \to \Alt{n-k}(\bbR^n)$ is the unique linear mapping that satisfies 
\begin{gather*}
    \langle \star u, v \rangle \, \cartanx^{1} \wedge \dots \wedge \cartanx^{n} = u \wedge v,
    \qquad 
    u \in \Alt{k}(\bbR^n), \quad v \in \Alt{n-k}(\bbR^n).
\end{gather*}
The Hodge star operator is an isometry with respect to the Euclidean norm. 
In addition, 
\begin{gather*}
    \star \star u = (-1)^{k(n-k)} u, \qquad u \in \Alt{k}(\bbR^n),
    \\
    \langle \star u, v \rangle = (-1)^{k(n-k)}\langle u, \star v \rangle, \qquad u \in \Alt{k}(\bbR^n),\ v \in \Alt{n-k}(\bbR^n).
\end{gather*}
Thus, the inverse of $\star : \Alt{k}(\bbR^n) \to \Alt{n-k}(\bbR^n)$ is $\star^{\inv} = (-1)^{k(n-k)} \star$. 
Furthermore, 
\begin{gather*} 
    \star( v \wedge u ) = (-1)^{k} v^{\sharp} \contract \star u, \qquad v \in \Alt{1}(\bbR^n),\ u \in \Alt{k}(\bbR^n),
    \\
    \star( a \contract u ) = (-1)^{k-1} a^{\flat} \wedge \star u, \qquad a \in \bbR^{n},\ u \in \Alt{k}(\bbR^n).
\end{gather*}
Suppose that $M : \bbR^{n} \to \bbR^{n}$ is a linear map. 
Its standard matrix representation with respect to Euclidean coordinates has the coefficient $M_{i,j} \in \bbR$ in the $i$-th row and $j$-th column.
Given $u \in \Alt{k}(\bbR^n)$, we write $M^{\ast} u$ for the pullback along the linear map $M$, which is defined as 
\[
    ( M^{\ast}u )( v_1, \dots, v_k ) = u( M v_1, \dots, M v_k ), \qquad v_1,\dots,v_{k} \in \bbR^{n}.
\]
It is uniquely defined by its action on the dual basis,
\begin{gather*}
    M^{\ast} \cartanx^{i} = \sum_{j=1}^{n} M_{i,j} \cartanx^{j} 
    ,
\end{gather*}
and the fact that it distributes over the alternating product:
\begin{gather*}
    M^{\ast} ( u \wedge v ) = M^{\ast} ( u ) \wedge M^{\ast} ( v ), \qquad u \in \Alt{k}(\bbR^n), \quad v \in \Alt{l}(\bbR^n).
\end{gather*}
An important property of the pullback concerns its operator norm with respect to the Euclidean norm on $\Alt{k}(\bbR^{n})$. 
If the linear map $M$ has the singular values 
\begin{gather*}
    \sigma_1(M) \geq \sigma_2(M) \geq \dots \geq \sigma_{n}(M), 
\end{gather*}
then~\cite[Exercise~2.6.P33]{matrixanalysis}
\begin{gather*}
    \| M^{\ast} u \| \leq \sigma_1(M) \sigma_2(M) \cdots \sigma_k(M) \|u\|.
\end{gather*}

Throughout this manuscript, we focus on exterior calculus, which is the calculus of alternating tensor fields. 
But occasionally, we use tensors that are not fully alternating. 
We write 
\[
    \DAlt{k}(\bbR^n) := \Alt{1}(\bbR^n) \otimes \Alt{k}(\bbR^n)
\]
for the tensor product vector space of $\Alt{1}(\bbR^{n})$ and $\Alt{k}(\bbR^{n})$.
We identify this with the vector space of linear mappings from $\bbR^{n}$ into $\Alt{k}(\bbR^{n})$.
Its members are also interpreted as covariant tensors in $k+1$ indices whenever convenient. 
This vector space has a basis given by 
\begin{gather*}
    \left\{ 
        \cartanx^{j} \otimes ( \cartanx^{i_1} \wedge \cartanx^{i_2} \wedge \dots \wedge \cartanx^{i_k} ) 
        \suchthat 
        1 \leq j \leq n, 1 \leq i_1 < i_2 < \dots < i_k \leq n 
    \right\}.
\end{gather*}
Hence, each $G \in \DAlt{k}(\bbR^n)$ has the unique representation
\begin{gather*}
    G 
    = 
    \sum_{\substack{ 1 \leq j \leq n \\ 1 \leq i_1 < \dots < i_k \leq n }} 
    G_{j,i_1,\dots,i_k} 
    \cartanx^{j} \otimes \cartanx^{i_1} \wedge \cartanx^{i_2} \wedge \dots \wedge \cartanx^{i_k},
\end{gather*}
where the coefficients satisfy 
\begin{gather*}
    G_{j,i_1,\dots,i_k} = G( e_{j}, e_{i_1}, \dots, e_{i_k} ).
\end{gather*}
This tensor product space carries the Euclidean inner product 
\begin{gather*}
    \langle G, F \rangle 
    = 
    \sum_{\substack{ 1 \leq j \leq n \\ 1 \leq i_1 < \dots < i_k \leq n }}
    G_{j,i_1,\dots,i_k} F_{j,i_1,\dots,i_k}
\end{gather*}
and the corresponding Euclidean norm. 
Since we interpret the space $\DAlt{k}(\bbR^n)$ as the space of linear mappings from $\bbR^{n}$ into $\Alt{k}(\bbR^n)$,
we write 
\begin{gather*}
    (G \cdot a)( v_{1},\dots,v_{k} ) := G( a, v_{1},\dots,v_{k} ), 
    \qquad 
    G \in \DAlt{k}(\bbR^n), \quad a \in \bbR^{n}, \quad v_1,\dots,v_{k} \in \bbR^{n}.
\end{gather*}

\subsection{Smooth differential forms}

Throughout this section, $\Domain \subseteq \bbR^n$ is a bounded Lipschitz domain.
Write $C^\infty\Alt{k}(\Domain)$ for the \emph{smooth differential $k$-forms}, i.e., the smooth mappings from $\Domain$ into $\Alt{k}(\bbR^{n})$.
The space $C^\infty\Alt{k}(\overline\Domain)$ denotes the members of $C^{\infty}\Alt{k}(\Domain)$ that are restrictions to $\Domain$ of a differential form in $C^{\infty}\Alt{k}(\bbR^{n})$. 
We let $C_{c}^\infty\Alt{k}(\Domain)$ denote those smooth differential $k$-forms whose support is compact in $\Domain$.

The \emph{exterior derivative} $\cartan^{k} : C^{\infty}\Alt{k}(\Domain) \to C^{\infty}\Alt{k+1}(\Domain)$ is defined as usual. 
It satisfies 
\begin{gather*}
    \cartan^{k+1} \cartan^{k} u = 0, \qquad u \in C^{\infty}\Alt{k}(\Domain),
    \\
    \cartan^{k+l} ( u \wedge v ) = \cartan^{k} u \wedge v + (-1)^{k} u \wedge \cartan^{l} v, 
    \qquad 
    u \in C^{\infty}\Alt{k}(\Domain), \quad v \in C^{\infty}\Alt{l}(\Domain).
\end{gather*}
In explicit form, the exterior derivative of any $u \in C^{\infty}\Alt{k}(\Domain)$ takes the form 
\begin{gather*}
    \cartan^{k} u
    =
    \sum_{\substack{ 1 \leq j \leq n \\ 1 \leq i_1 < \dots < i_k \leq n }}
    \partial_j u_{i_1,\dots,i_k} \cartanx^{j} \wedge \cartanx^{i_1} \wedge \dots \wedge \cartanx^{i_k}
    = 
    \sum_{j=1}^{n} \cartanx^{j} \wedge ( \nabla u \cdot e_j )
    .
\end{gather*}
The \emph{exterior coderivative} $\cocartan^{k} : C^{\infty}\Alt{k}(\Domain) \to C^{\infty}\Alt{k-1}(\Domain)$ is the formal adjoint of the exterior derivative $\cartan^{k-1}$. 
It takes the form 
\begin{gather*}
    \cocartan^{k} u
    =
    (-1)^{k} \star^{\inv} \cartan^{n-k} \star u, 
    \qquad 
    u \in C^{\infty}\Alt{k}(\Domain)
    .
\end{gather*}
Equivalently, for all
\begin{gather*}
    \cocartan^{k} u
    =
    (-1)^{k + (k-1)(n-k-1)} \star \cartan^{n-k} \star u
=
    (-1)^{n(k+1)+1} \star \cartan^{n-k} \star u, 
    \qquad 
    u \in C^{\infty}\Alt{k}(\Domain)
    .
\end{gather*}

\subsection{Rough differential forms}

We write ${L}^{p}(\Domain)$ for the Lebesgue space with exponent $1 \leq p \leq \infty$.
Equipped with its usual norm, this becomes a Banach space.
A \emph{measurable differential $k$-form} on $\Domain$ is any measurable function $u : \Domain \to \Alt{k}(\bbR^{n})$. 
For each such measurable function, the pointwise norm $\|u\| : \Domain \to \bbR$ is measurable as well. 
Let ${L}^{p}\Alt{k}(\Domain)$ be the space of those measurable functions $u : \Domain \to \Alt{k}(\bbR^{n})$, up to equality almost everywhere,
for which $\|u\| \in {L}^{p}(\Domain)$.
We equip ${L}^{p}\Alt{k}(\Domain)$ with the canonical norm: 
\begin{gather*}
    \| {u} \|_{{L}^{p}(\Domain)}
    :=
    \| {u} \|_{{L}^{p}\Alt{k}(\Domain)}
    :=
    \left( 
        \int_{\Domain} \|{u}(x)\|^{p} \,\dif x
    \right)^{\frac 1 p}
    ,
    \quad 
    {u} \in {L}^{p}\Alt{k}(\Domain),
\end{gather*}
with the obvious modification when $p=\infty$. 
Hence, ${L}^{p}\Alt{k}(\Domain)$ is a Banach space that contains exactly those differential $k$-forms on $\Domain$ whose coefficients are in $L^{p}(\Domain)$. 

Recall that ${L}^{2}\Alt{k}(\Domain)$ is a Hilbert space whose norm is induced from an inner product 
\begin{gather*}
    \langle {u}, {v} \rangle_{{L}^{2}(\Domain)}
    :=
    \langle {u}, {v} \rangle_{{L}^{2}\Alt{k}(\Domain)}
    :=
    \int_{\Domain} \langle {u}(x), {v}(x) \rangle \,\dif x
    ,
    \quad 
    {u}, {v} \in {L}^{2}\Alt{k}(\Domain).
\end{gather*}
Recall that the integral of any integrable $n$-form ${u} \in L^{1}\Alt{n}(\Domain)$ can be defined via the Hodge star, which sends $n$-forms to functions:
\begin{gather*}
    \int_{\Domain} u := \int_{\Domain} (\star u)(x) \,\dif x.
\end{gather*}
The inner product satisfies the formula 
\begin{align*}
    \langle {u}, {v} \rangle_{L^{2}(\Domain)} = \int_{\Domain} u \wedge \star v, 
    \quad 
    {u}, {v} \in {L}^{2}\Alt{k}(\Domain).
\end{align*}
We define 
\begin{align*}
    H\Alt{k}(\Domain)
    := 
    \left\{ 
        u \in {L}^{2}\Alt{k}(\Domain)
        \suchthat 
        \cartan^{k} u \in {L}^{2}\Alt{k+1}(\Domain) 
    \right\},
\end{align*}
where the exterior derivative is taken in the sense of distributions. 
The linear operator 
\begin{align}\label{math:exterior-derivative:full}
    \cartan^{k} : H\Alt{k}(\Domain) \subseteq {L}^{2}\Alt{k}(\Domain) \to {L}^{2}\Alt{k+1}(\Domain)
\end{align}
is densely defined and closed. Moreover, it satisfies the differential property 
\begin{align*}
    \cartan^{k+1} \cartan^{k} = 0.
\end{align*}
Since the operator is closed, its domain $H\Alt{k}(\Domain)$ is a Hilbert space when equipped with the graph norm
\begin{gather*}
    \| u \|_{H\Alt{k}(\Domain)}
    :=
    \left( 
        \int_{\Domain} 
        \|u(x)\|^{2} 
        +
        \|\cartan^{k} u(x)\|^{2}
        \,\dif x
    \right)^{\frac 1 2}
    .
\end{gather*}
Since $\Domain$ is a bounded Lipschitz domain, we have a dense subspace 
\begin{gather*}
    C^{\infty}\Alt{k}(\overline\Domain) \subseteq H\Alt{k}(\Domain).
\end{gather*}
Writing $\tilde u \in {L}^{2}\Alt{k}(\bbR^{n})$ for the trivial extension by zero of any $u \in {L}^{2}\Alt{k}(\Domain)$,
we also define 
\begin{align*}
    H_{0}\Alt{k}(\Domain)
    := 
    \left\{ 
        u \in H\Alt{k}(\Domain)
        \suchthat 
        \tilde u \in H\Alt{k}(\bbR^{n})
    \right\},
\end{align*}
which is a closed subspace of $H\Alt{k}(\Domain)$. We have a closed densely defined linear operator 
\begin{align}\label{math:exterior-derivative:zero}
    \cartan^{k} : H_0\Alt{k}(\Domain) \subseteq {L}^{2}\Alt{k}(\Domain) \to {L}^{2}\Alt{k+1}(\Domain)
    .
\end{align}
Since $\Domain$ is a bounded Lipschitz domain, the following inclusion is dense:
\begin{gather*}
    C^{\infty}_{c}\Alt{k}(\Domain) \subseteq H_{0}\Alt{k}(\Domain).
\end{gather*}
With respect to the graph norms, we have bounded operators 
\begin{align*}
    \cartan^{k} : H  \Alt{k}(\Domain) \to H    \Alt{k+1}(\Domain),
    \qquad 
    \cartan^{k} : H_0\Alt{k}(\Domain) \to H_{0}\Alt{k+1}(\Domain).
\end{align*}
Our main object of interest is the ${L}^{2}$ de~Rham complex on the open set $\Domain$,
which is the differential complex
\begin{subequations}\label{math:l2-de-rham-complex}
\begin{align}\label{math:l2-de-rham-complex:diff}
    \begin{CD}
     0 \to 
     H\Alt{  0}(\Domain)
     @>{\cartan^{0}}>>
     H\Alt{  1}(\Domain)
     @>{\cartan^{1}}>>
     \dots 
@>{\cartan^{n-1}}>>
     H\Alt{n  }(\Domain)
     \to 0
     .
    \end{CD}
\end{align}
We also define the differential complex with boundary conditions 
\begin{align}\label{math:l2-de-rham-complex:diffzero}
    \begin{CD}
     0 \to 
     H_{0}\Alt{  0}(\Domain)
     @>{\cartan^{0}}>>
     H_{0}\Alt{  1}(\Domain)
     @>{\cartan^{1}}>>
     \dots 
@>{\cartan^{n-1}}>>
     H_{0}\Alt{n  }(\Domain)
     \to 0
     .
    \end{CD}
\end{align}
\end{subequations}
All these constructions have adjoint counterparts.
The Hodge star operator is an isometry of Banach spaces,
\begin{align*}
    \star : {L}^{p}\Alt{k}(\Domain) \to {L}^{p}\Alt{n-k}(\Domain), \quad p \in [1,\infty]. 
\end{align*}
We define 
\begin{align*}
    H^{\star}\Alt{k}(\Domain)
    &:= 
    \left\{ 
        u \in {L}^{2}\Alt{k}(\Domain)
        \suchthat 
        \cocartan^{k} u \in {L}^{2}\Alt{k-1}(\Domain) 
    \right\},
    \\
    H_{0}^{\star}\Alt{k}(\Domain)
    &:= 
    \left\{ 
        u \in H^{\star}\Alt{k}(\Domain)
        \suchthat 
        \tilde u \in H^{\star}\Alt{k}(\bbR^{n})
    \right\}
    .
\end{align*}
Equivalently,
\begin{align*}
    H^{\star}\Alt{k}(\Domain)   = \star H\Alt{n-k}(\Domain)
    ,
    \quad 
    H_0^{\star}\Alt{k}(\Domain) = \star H_0\Alt{n-k}(\Domain)
    .
\end{align*}
The exterior coderivative is the densely defined closed operator 
\begin{align}\label{math:codifferential:full}
    \cocartan^{k} : H^{\star}\Alt{k}(\Domain) \subseteq {L}^{2}\Alt{k}(\Domain) 
    \to {L}^{2}\Alt{k-1}(\Domain),
    \quad 
    u \mapsto (-1)^{k} \star^\inv \cartan^{n-k} \star u
    .
\end{align}
We also use the restriction to spaces with boundary conditions:
\begin{align}\label{math:codifferential:zero}
    \cocartan^{k} : H_{0}^{\star}\Alt{k}(\Domain) \subseteq {L}^{2}\Alt{k}(\Domain) \to {L}^{2}\Alt{k-1}(\Domain)
    .
\end{align}
The exterior coderivative satisfies the differential property 
\begin{align*}
    \cocartan^{k-1} \cocartan^{k} = 0.
\end{align*}
These spaces are Hilbert spaces with the graph norms
\begin{gather*}
    \| u \|_{H^{\star}\Alt{k}(\Domain)}
    :=
    \left( 
        \int_{\Domain} 
        \|u(x)\|^{2} 
        +
        \|\cocartan^{k} u(x)\|^{2} 
        \,\dif x
    \right)^{\frac 1 2}
    .
\end{gather*}
With respect to the graph norms, we have bounded operators 
\begin{align*}
    \cocartan^{k} : H^{\star}  \Alt{k}(\Domain) \to H^{\star}    \Alt{k-1}(\Domain),
    \qquad 
    \cocartan^{k} : H^{\star}_0\Alt{k}(\Domain) \to H^{\star}_{0}\Alt{k-1}(\Domain).
\end{align*}
We have the dual de~Rham complexes
\begin{subequations}\label{math:l2-de-rham-complex:zero}
\begin{gather}\label{math:l2-de-rham-complex:diffstarzero}
    \begin{CD}
     0 \gets 
     H^{\star}_{0}\Alt{  0}(\Domain)
     @<{\cocartan^{1}}<<
     H^{\star}_{0}\Alt{  1}(\Domain)
     @<{\cocartan^{2}}<<
     \dots 
@<{\cocartan^{n}}<<
     H^{\star}_{0}\Alt{n  }(\Domain)
     \gets 0
    \end{CD}
\end{gather}
and
\begin{gather}\label{math:l2-de-rham-complex:diffstar}
    \begin{CD}
     0 \gets 
     H^{\star}\Alt{  0}(\Domain)
     @<{\cocartan^{1}}<<
     H^{\star}\Alt{  1}(\Domain)
     @<{\cocartan^{2}}<<
     \dots 
@<{\cocartan^{n}}<<
     H^{\star}\Alt{n  }(\Domain)
     \gets 0
     .
    \end{CD}
\end{gather}
\end{subequations}
The key property is the adjoint relation in the sense of unbounded operators:
the operator $\cartan^k$ in~\eqref{math:exterior-derivative:full} is the Hilbert space adjoint of
$\cocartan^{k+1}$ in~\eqref{math:codifferential:zero}, whereas
the operator $\cartan^k$ in~\eqref{math:exterior-derivative:zero} is the adjoint of
$\cocartan^{k+1}$ in~\eqref{math:codifferential:full}.
In particular,
\begin{align*}
 &
 \langle \cartan^k u, v \rangle_{{L}^{2}\Alt{k+1}(\Domain)}
 = 
 \langle u, \cocartan^{k+1} v \rangle_{{L}^{2}\Alt{k}(\Domain)},
 \quad  
 u \in H\Alt{k}(\Domain),\ v \in H_0^{\star}\Alt{k+1}(\Domain)
 ,
 \\
 &
 \langle \cartan^k u, v \rangle_{{L}^{2}\Alt{k+1}(\Domain)}
 = 
 \langle u, \cocartan^{k+1} v \rangle_{{L}^{2}\Alt{k}(\Domain)},
 \quad  
 u \in H_0\Alt{k}(\Domain),\ v \in H^{\star}\Alt{k+1}(\Domain)
 .
\end{align*}

\subsection{Closed range condition and potential operators}

Hilbert de~Rham complexes become more amenable for analysis once an additional condition is satisfied. 
We say that the \emph{closed range condition} holds if the differential operators 
$\cartan^{k} : H\Alt{k}(\Domain) \to {L}^{2}\Alt{k+1}(\Domain)$ in the de~Rham complex~\eqref{math:l2-de-rham-complex:diff} have closed range.

We first notice that the closed range condition holds for all the differential operators in the complex~\eqref{math:l2-de-rham-complex:diff} if and only if in any of the other differential complexes~\eqref{math:l2-de-rham-complex:diffstarzero},~\eqref{math:l2-de-rham-complex:diffzero}, or~\eqref{math:l2-de-rham-complex:diffstar} it holds for all the differential operators.

The closed range condition holds in most applications. 
In particular, the closed range condition holds on bounded Lipschitz domains; see~\cite[Theorem~1.3]{axelsson2004hodge},~\cite[Theorem~5.1]{mitrea2001dirichlet}. 
This includes the important class of bounded convex domains. 

The closed range condition has several important characterizations.
Densely defined closed operators with closed range allow for generalized inverses known as pseudoinverses~\cite{beutler1965operator}.
The closed range condition for the differential operator 
\begin{align} 
    \cartan^{k}
    :
    H\Alt{k}(\Domain)
    \to 
    {L}^{2}\Alt{k+1}(\Domain)
\end{align}
is equivalent to the existence of a bounded linear operator
\begin{align}\label{math:cartan-pseudoinverse}
    \cartan_{k}^\dagger 
    :
    {L}^{2}\Alt{k+1}(\Domain) 
    \to 
    H\Alt{k}(\Domain)
    \subseteq
    {L}^{2}\Alt{k}(\Domain)
\end{align}
that satisfy
\begin{align*}
    \cartan^{k} u &= \cartan^{k} \cartan_{k}^\dagger \cartan^{k} u, \quad u \in H\Alt{k}(\Domain)
    ,
    \\
    \ker \cartan_{k}^\dagger &= \left( \cartan^{k} H\Alt{k}(\Domain) \right)^\perp
    ,
    \\
    \rng( \cartan_{k}^\dagger )^\perp &= \ker\left( \cartan^{k} : H\Alt{k}(\Domain) \subseteq {L}^{2}\Alt{k}(\Domain) \to {L}^{2}\Alt{k+1}(\Domain) \right) 
    .
\end{align*}
These operators act as generalized inverses of the exterior derivatives: 
they vanish on the orthogonal complement of the range of the exterior derivative,
and to any form that is an exterior derivative, they produce the minimum-norm preimage.
This preimage is a potential, which is why we call $\cartan_{k}^{\dagger}$ a potential operator.
We remark that whilst $\cartan_{k}^{\dagger}$ is bounded, it generally does not have closed range. 

By duality, the closed range condition is also equivalent to the 
existence of bounded linear operators 
\begin{align}\label{math:cocartan-pseudoinverse}
    \cocartan_{k}^\dagger 
    :
    {L}^{2}\Alt{k-1}(\Domain) 
    \to 
    H^{\star}_{0}\Alt{k}(\Domain)
    \subseteq
    {L}^{2}\Alt{k}(\Domain)
\end{align}
that satisfy
\begin{align*}
    \cocartan^{k} u &= \cocartan^{k} \cocartan_{k}^\dagger \cocartan^{k} u, \quad u \in H^{\star}_{0}\Alt{k}(\Domain)
    ,
    \\
    \ker \cocartan_{k}^\dagger &= \left( \cocartan^{k} H_0^{\star}\Alt{k}(\Domain) \right)^\perp
    ,
    \\
    \rng( \cocartan_{k}^\dagger )^\perp &= \ker\left( \cocartan^{k}: H_0^{\star}\Alt{k}(\Domain) \subseteq {L}^{2}\Alt{k}(\Domain) \to {L}^{2}\Alt{k-1}(\Domain) \right) 
    .
\end{align*}
These are potential operators for the exterior coderivative, analogous to the ones for the exterior derivative. 
The operators $\cartan_{k}^\dagger$ and $\cocartan_{k+1}^\dagger$ are mutually adjoint, as follows from $\cartan^{k}$ and $\cocartan^{k+1}$ being mutually adjoint.

The operator norms of these potential operators are particularly important. 
We define the \emph{Poincar\'e--Friedrichs constant} $C_{\PF,\Domain,k}$ as the operator norm 
\begin{align}\label{math:pseudoinverse-operator-norm-is-pf-constant:cartan}
    C_{\PF,\Domain,k}
    :=
    \sup\limits_{\substack{ f \in {L}^{2}\Alt{k+1}(\Domain) \\ \|f\|_{{L}^{2}(\Domain)} \leq 1 }}
    \| \cartan_{k}^\dagger f \|_{{L}^{2}(\Domain)}
    .
\end{align}
By duality, 
\begin{align}\label{math:pseudoinverse-operator-norm-is-pf-constant:cocartan}
    C_{\PF,\Domain,k}
    =
    \sup\limits_{\substack{ g \in {L}^{2}\Alt{k  }(\Domain) \\ \|g\|_{{L}^{2}(\Domain)} \leq 1 }}
    \| \cocartan_{k+1}^\dagger g \|_{{L}^{2}(\Domain)}
    .
\end{align}
These constants appear in the Poincar\'e--Friedrichs inequalities: 
for all $u \in H\Alt{k}(\Domain)$ there exists $w \in H\Alt{k}(\Domain)$ such that $\cartan^{k} w = \cartan^{k} u$ and 
\begin{align*}
    \| w \|_{{L}^{2}\Alt{k}(\Domain)}
    \leq 
    C_{\PF,\Domain,k}
    \| \cartan^{k} u \|_{{L}^{2}\Alt{k+1}(\Domain)}
    .
\end{align*}
Equivalently, for every $u\in H\Alt{k}(\Domain)$,
\begin{align*}
    \inf_{ \substack{ z \in H\Alt{k}(\Domain) \\ \cartan^k z = 0 } }\| u - z \|_{{L}^{2}\Alt{k}(\Domain)}
    \leq
    C_{\PF,\Domain,k} \|\cartan^k u\|_{{L}^{2}\Alt{k+1}(\Domain)}.
\end{align*}
Exactly the same constant appears in the dual version of the inequality:
for all $u \in H^{\star}_{0}\Alt{k+1}(\Domain)$ there exists $w \in H^{\star}_{0}\Alt{k+1}(\Domain)$ such that $\cocartan^{k+1} w = \cocartan^{k+1} u$ and 
\begin{align*}
    \| w \|_{{L}^{2}\Alt{k+1}(\Domain)}
    \leq 
    C_{\PF,\Domain,k}
    \| \cocartan^{k+1} u \|_{{L}^{2}\Alt{k}(\Domain)}
    .
\end{align*}
Equivalently, for all $u \in H^{\star}_{0}\Alt{k+1}(\Domain)$,
\begin{align*}
    \inf_{ \substack{ z \in H^{\star}_{0}\Alt{k+1}(\Domain) \\ \cocartan^{k+1} z = 0 } }\| u - z \|_{{L}^{2}\Alt{k+1}(\Domain)} 
    \leq
    C_{\PF,\Domain,k} \|\cocartan^{k+1} u\|_{{L}^{2}\Alt{k}(\Domain)}.
\end{align*}

\subsection{Gaffney's inequality on bounded convex domains}

We write $H^{1}(\Domain)$ for the classical first-order Sobolev space over a domain $\Domain$. 
More generally, $H^{1}\Alt{k}(\Domain)$ denotes the differential $k$-forms whose coefficients are in the first-order Sobolev space. 
Notice that $H^{1}\Alt{k}(\Domain)$ is a Hilbert space when equipped with the componentwise inner product and the corresponding norm:
\begin{gather*}
    \langle u, v \rangle_{H^{1}\Alt{k}(\Domain)}
    :=
    \langle u, v \rangle_{L^{2}\Alt{k}(\Domain)}
    +
    \sum_{j=1}^{n}
    \langle \partial_{j} u, \partial_{j} v \rangle_{L^{2}\Alt{k}(\Domain)}
    ,
    \qquad 
    u, v \in H^{1}\Alt{k}(\Domain)
    ,
    \\
    \| u \|_{H^{1}\Alt{k}(\Domain)}
    :=
    \sqrt{ \langle u, u \rangle_{H^{1}\Alt{k}(\Domain)} }
    ,
    \qquad 
    u \in H^{1}\Alt{k}(\Domain)
    .
\end{gather*}
Notice that 
\[ 
    H^{1}(\Domain) = H^{1}\Alt{0}(\Domain) = H\Alt{0}(\Domain),
\]
but generally $H^{1}\Alt{k}(\Domain)$ is more regular than $H\Alt{k}(\Domain)$. 
We also recall the \emph{Rellich embedding theorem}, which states that over bounded Lipschitz domains, 
the inclusion 
\[
    H^{1}\Alt{k}(\Domain) \subseteq {L}^{2}\Alt{k}(\Domain)
\]
is a compact embedding of Hilbert spaces. 

We write ${L}^{2}\DAlt{k}(\Domain)$ for the Hilbert space consisting of those measurable functions $G : \Domain \to \DAlt{k}(\bbR^{n})$ for which the pointwise norm $\|G\|$ is square-integrable over $\Domain$.
This Hilbert space carries the inner product 
\begin{gather*}
    \langle {G}, {F} \rangle_{{L}^{2}(\Domain)}
    :=
    \langle {G}, {F} \rangle_{{L}^{2}\DAlt{k}(\Domain)}
    :=
    \int_{\Domain} \langle {G}(x), {F}(x) \rangle \,\dif x
    ,
    \quad 
    {G}, {F} \in {L}^{2}\DAlt{k}(\Domain).
\end{gather*}
It has the associated norm $\| \cdot \|_{{L}^{2}(\Domain)}$. 

The gradient operator is a bounded linear operator 
\begin{gather*}
    \nabla : H^{1}\Alt{k}(\Domain) \to {L}^{2}\DAlt{k}(\Domain)
\end{gather*}
that is defined in the canonical way: for each $u \in H^{1}\Alt{k}(\Domain)$, we have 
\begin{gather*}
    \nabla u 
    = 
    \sum_{\substack{ 1 \leq j \leq n \\ 1 \leq i_1 < \dots < i_k \leq n }}
    \partial_j u_{i_1,\dots,i_k} \cartanx^{j} \otimes \cartanx^{i_1} \wedge \dots \wedge \cartanx^{i_k}
    .
\end{gather*}
It is easily seen that 
\begin{gather*}
    \| u \|_{H^{1}\Alt{k}(\Domain)}^{2} = \| u \|_{L^{2}(\Domain)}^{2} + \| \nabla u \|_{L^{2}(\Domain)}^{2}.
\end{gather*}
The following result is central to this manuscript. 
In particular, the Gaffney inequality with our specific norm convention holds with constant one. 

\begin{theorem}\label{theorem:gaffney}
    Let $\Domain \subseteq \bbR^{n}$ be a bounded convex domain.
    Let $u \in H\Alt{k}(\Domain) \cap H_0^{\star}\Alt{k}(\Domain)$. Then $u \in H^{1}\Alt{k}(\Domain)$, and 
    \begin{align}\label{math:gaffney}
        \| \nabla u \|_{{L}^{2}(\Domain)}^{2}
        \leq 
        \| \cartan^{k}   u \|_{{L}^{2}\Alt{k+1}(\Domain)}^{2}
        +
        \| \cocartan^{k} u \|_{{L}^{2}\Alt{k-1}(\Domain)}^{2}
        .
    \end{align}
\end{theorem}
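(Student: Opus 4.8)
The plan is to prove the inequality first for smooth forms on smoothly bounded convex domains, where it follows from a Bochner-type boundary identity; then to admit arbitrary forms on such domains by density; and finally to pass to general bounded convex domains. The first two steps are classical; I expect the last to be the genuine difficulty, being in essence Mitrea's theorem~\cite{mitrea2001dirichlet}.

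\emph{Smooth forms on smoothly bounded convex domains.} Suppose $\partial\Domain$ is smooth and $u\in C^{\infty}\Lambda^{k}(\overline{\Domain})$ satisfies the pointwise normal boundary condition underlying membership in $H_{0}^{\star}\Lambda^{k}$, i.e.\ $\star u$ pulls back to zero on $\partial\Domain$. In flat space the Bochner--Weitzenb\"ock formula degenerates to the pointwise identity $\cocartan^{k+1}\cartan^{k}u+\cartan^{k-1}\cocartan^{k}u=\nabla^{\ast}\nabla u$. Pairing it with $u$, integrating over $\Domain$, and integrating by parts on each side produces the interior Dirichlet integrals $\int_{\Domain}|\nabla u|^{2}$ and $\int_{\Domain}\bigl(|\cartan^{k}u|^{2}+|\cocartan^{k}u|^{2}\bigr)$ together with boundary integrals whose difference is the classical Gaffney--Friedrichs boundary term; once the normal part of $u$ is used, that term reduces to $\int_{\partial\Domain}\langle\mathcal{W}^{[k]}u,u\rangle\,dS$, where $\mathcal{W}^{[k]}$ is the derivation extension to $\Lambda^{k}$ of the Weingarten map of $\partial\Domain$. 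Convexity makes the Weingarten map nonnegative, hence $\mathcal{W}^{[k]}\geq0$, and the boundary term carries the sign giving $\int_{\Domain}|\nabla u|^{2}\leq\int_{\Domain}\bigl(|\cartan^{k}u|^{2}+|\cocartan^{k}u|^{2}\bigr)$; then $a^{2}+b^{2}\leq(a+b)^{2}$ for $a,b\geq0$ yields~\eqref{math:gaffney}. Only nonnegativity of the second fundamental form is used, so convexity — not strict convexity — of a smoothly bounded $\Domain$ suffices. For the boundary identity I would cite Guerini--Savo~\cite{guerini2004eigenvalue} and the references there.

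\emph{General forms on smoothly bounded convex domains.} Here one invokes the standard density, valid on smoothly bounded domains, of $\{\,v\in C^{\infty}\Lambda^{k}(\overline{\Domain}) : \iota_{\partial\Domain}^{\ast}\star v=0\,\}$ in $H\Lambda^{k}(\Domain)\cap H_{0}^{\star}\Lambda^{k}(\Domain)$ with respect to the graph norm $v\mapsto\|v\|_{\Lebesgue^{2}(\Domain)}+\|\cartan^{k}v\|_{\Lebesgue^{2}(\Domain)}+\|\cocartan^{k}v\|_{\Lebesgue^{2}(\Domain)}$. Since the bound of the previous paragraph has constant $1$ independently of $v$, an approximating sequence for any $u\in H\Lambda^{k}(\Domain)\cap H_{0}^{\star}\Lambda^{k}(\Domain)$ is Cauchy in $H^{1}\Lambda^{k}(\Domain)$; its limit therefore lies in $H^{1}\Lambda^{k}(\Domain)$, coincides with $u$, and satisfies~\eqref{math:gaffney}.

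\emph{General bounded convex domains.} This is the crux. Assume $\Ball{\rho}{0}\subseteq\Domain\subseteq\Ball{R}{0}$; Proposition~\ref{proposition:smudgeapproximation} supplies smooth strictly convex $\Domain_{j}\subseteq\Domain$ with $\smudge_{\Domain_{j}}\to\smudge_{\Domain}$ uniformly on the unit sphere, so $\Domain_{j}\uparrow\Domain$. The obstacle is that transferring $u$ to these smooth domains destroys the structure used above: restricting $u$ to $\Domain_{j}$ keeps $\cartan^{k}u,\cocartan^{k}u\in\Lebesgue^{2}(\Domain_{j})$ but loses the normal boundary condition on $\partial\Domain_{j}$ — without which the Gaffney--Friedrichs boundary term need not have a sign — while the boundary-condition-free Friedrichs inequality on $\Domain_{j}$ is useless, its constant degenerating as the curvature of $\partial\Domain_{j}$ blows up near vertices of $\Domain$; symmetrically, extending $u$ outward through $\partial\Domain$ by zero after applying $\star$ preserves the normal boundary condition but destroys $\Lebesgue^{2}$-integrability of $\cartan^{k}u$ across $\partial\Domain$ unless the tangential part of $u$ already vanishes. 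I would argue in two stages. First, establish $u\in H^{1}\Lambda^{k}(\Domain)$: in the interior this is classical regularity for the overdetermined elliptic operator $v\mapsto(\cartan^{k}v,\cocartan^{k}v)$ (componentwise $\laplace u\in H^{-1}_{\loc}(\Domain)$, hence $u\in H^{1}_{\loc}(\Domain)$), and near $\partial\Domain$ one runs a tangential difference-quotient argument: by Lemma~\ref{lemma:stardomainsarelipschitz} the boundary is locally a convex Lipschitz graph, and convexity ensures that translating $\overline{\Domain}$ along any direction in an appropriate open half-space of tangential directions keeps it locally inside $\overline{\Domain}$, which is exactly what makes the translated form an admissible competitor and bounds the associated difference quotients; the remaining normal first derivatives are then recovered pointwise from $\cartan^{k}u,\cocartan^{k}u\in\Lebesgue^{2}$. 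Second, with $u\in H^{1}\Lambda^{k}(\Domain)$ in hand, recover the sharp constant $1$ by approximating $u$ by smooth forms in $H^{1}\Lambda^{k}(\Domain)$ and appealing to a version of the Bochner boundary identity on the convex Lipschitz domain $\Domain$ in which the boundary curvature enters as a \emph{nonnegative} matrix-valued surface measure — the curvature measure of the convex body — and may simply be discarded. The technical heart, namely justifying that boundary integration by parts and making the difference-quotient step precise on a Lipschitz graph, is the content of~\cite{mitrea2001dirichlet}, which one may instead cite directly.
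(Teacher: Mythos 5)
The paper does not prove this theorem: its ``proof'' is a one-line citation to Csat\'o--Dacorogna--Sil and Mitrea. Your sketch is a correct outline of the standard argument behind those references — Weitzenb\"ock identity with nonnegative boundary curvature on smoothly bounded convex domains, density of smooth forms satisfying the normal trace condition, and then passage from smooth to Lipschitz convex domains — and you correctly identify the last step as the real technical content, deferring to Mitrea~\cite{mitrea2001dirichlet} for it just as the paper does.
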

\begin{proof}
    See, e.g.,~\cite{csato2018best},~\cite[Theorem~5.5]{mitrea2001dirichlet}. \end{proof}

\begin{remark}
    Gaffney's inequality implies the compact embedding of $H\Alt{k}(\Domain) \cap H^{\star}_{0}\Alt{k}(\Domain)$ into $L^{2}\Alt{k}(\Domain)$ via the Rellich embedding theorem. 
    Compact embeddings in three-dimensional vector calculus have been a staple in the literature~\cite{weber1980local,picard1984elementary,witsch1993remark}.
\end{remark}

\begin{remark}
    The Hilbert spaces $H\Alt{k}(\Domain)$ and $H_0^{\star}\Alt{k}(\Domain)$ are defined via the graph norms of the exterior derivative or the exterior coderivative: their members have just enough regularity to ensure that they and their (co)differentials have coefficients in ${L}^{2}(\Domain)$. 
    These spaces are often called Sobolev spaces of differential forms, but in general they do not coincide with the spaces $H^{1}\Alt{k}(\Domain)$. 
    It is a delicate question whether a Sobolev differential form has coefficients in the classical Sobolev space. 
    However, under additional assumptions on the domain, such as boundedness and convexity, Gaffney's inequality gives a positive answer for forms satisfying the corresponding boundary condition.
\end{remark}

\subsection{Hodge decompositions on bounded convex domains}

Throughout this subsection, $\Domain \subseteq \bbR^{n}$ is a bounded convex domain. 
In particular, the closed range condition holds for all operators in the ${L}^{2}$ de~Rham complex and in its dual complex. 
We therefore introduce the kernel and range spaces
\begin{align*}
    \calN\Alt{k}(\Domain)
    &:=
    \ker\!\left(
        \cartan^{k}
        :
        H\Alt{k}(\Domain)
        \subseteq
        {L}^{2}\Alt{k}(\Domain)
        \to
        {L}^{2}\Alt{k+1}(\Domain)
    \right),
    \\
    \calR\Alt{k}(\Domain)
    &:=
    \rng\!\left(
        \cartan^{k-1}
        :
        H\Alt{k-1}(\Domain)
        \subseteq
        {L}^{2}\Alt{k-1}(\Domain)
        \to
        {L}^{2}\Alt{k}(\Domain)
    \right),
\end{align*}
for $0 \leq k \leq n$, where by convention $\calR\Alt{0}(\Domain) := \{0\}$.
Likewise, we define
\begin{align*}
    \calN^{\star}\Alt{k}(\Domain)
    &:=
    \ker\!\left(
        \cocartan^{k}
        :
        H_{0}^{\star}\Alt{k}(\Domain)
        \subseteq
        {L}^{2}\Alt{k}(\Domain)
        \to
        {L}^{2}\Alt{k-1}(\Domain)
    \right),
    \\
    \calR^{\star}\Alt{k}(\Domain)
    &:=
    \rng\!\left(
        \cocartan^{k+1}
        :
        H_{0}^{\star}\Alt{k+1}(\Domain)
        \subseteq
        {L}^{2}\Alt{k+1}(\Domain)
        \to
        {L}^{2}\Alt{k}(\Domain)
    \right),
\end{align*}
for $0 \leq k \leq n$, where by convention $\calR^{\star}\Alt{n}(\Domain) := \{0\}$.
The members of $\calN\Alt{k}(\Domain)$ are called \emph{closed} and the members of $\calN^{\star}\Alt{k}(\Domain)$ are called \emph{coclosed}.
The members of $\calR\Alt{k}(\Domain)$ are called \emph{exact}  and the members of $\calR^{\star}\Alt{k}(\Domain)$ are called \emph{coexact}.

The differential properties of the exterior derivative and coderivative imply that 
\begin{gather*}
    \calR\Alt{k}(\Domain) \subseteq \calN\Alt{k}(\Domain),
    \qquad 
    \calR^{\star}\Alt{k}(\Domain) \subseteq \calN^{\star}\Alt{k}(\Domain).
\end{gather*}
Since $\Domain$ is convex, it is contractible. 
Hence the de~Rham cohomology is trivial in positive degrees and one-dimensional in degree zero. 
More precisely, 
\begin{align*}
    \calN\Alt{0}(\Domain) &= \bbR,
    \\
    \calN\Alt{k}(\Domain) &= \calR\Alt{k}(\Domain), \qquad 1 \leq k \leq n.
\end{align*}
That is, the only closed $0$-forms are the constant functions, whereas every closed $k$-form with $1 \leq k \leq n$ is exact.
By duality, the corresponding statement for the dual complex is
\begin{align*}
    \calR^{\star}\Alt{0}(\Domain)^{\perp} &= \bbR,
    \\
    \calN^{\star}\Alt{k}(\Domain) &= \calR^{\star}\Alt{k}(\Domain), \qquad 1 \leq k \leq n.
\end{align*}
That is, the constant functions are coclosed $0$-forms that are not coexact, and they are the orthogonal complement of the coexact $0$-forms.
Every coclosed $k$-form with $1 \leq k \leq n$ is coexact. 

The adjointness of $\cartan^{k}$ and $\cocartan^{k+1}$ implies the orthogonality relations 
\begin{align*}
    \left( \calN\Alt{k}(\Domain) \right)^{\perp}
    &=
    \calR^{\star}\Alt{k}(\Domain),
    &
    \left( \calR^{\star}\Alt{k}(\Domain) \right)^{\perp}
    &=
    \calN\Alt{k}(\Domain),
    \\
    \left( \calN^{\star}\Alt{k}(\Domain) \right)^{\perp}
    &=
    \calR\Alt{k}(\Domain),
    &
    \left( \calR\Alt{k}(\Domain) \right)^{\perp}
    &=
    \calN^{\star}\Alt{k}(\Domain).
\end{align*}
Here, we use that all relevant ranges are closed.

For every degree $0 \leq k \leq n$, the space ${L}^{2}\Alt{k}(\Domain)$ admits the orthogonal decompositions
\begin{align*}
    {L}^{2}\Alt{k}(\Domain)
    &=
    \calR\Alt{k}(\Domain)
    \oplus
    \calN^{\star}\Alt{k}(\Domain)
    \\
    &=
    \calN\Alt{k}(\Domain)
    \oplus
    \calR^{\star}\Alt{k}(\Domain).
\end{align*}
On bounded convex domains, these reduce to the familiar orthogonal Hodge decompositions
\begin{align*}
    {L}^{2}\Alt{0}(\Domain)
    &=
    \calN\Alt{0}(\Domain)
    \oplus
    \calR^{\star}\Alt{0}(\Domain)
    =
    \bbR
    \oplus
    \calR^{\star}\Alt{0}(\Domain)
    ,
    \\
    {L}^{2}\Alt{k}(\Domain)
    &=
    \calR\Alt{k}(\Domain)
    \oplus
    \calR^{\star}\Alt{k}(\Domain),
    \qquad 1 \leq k \leq n
    .
\end{align*}
In particular, there are no harmonic forms in the positive degrees.
At degree zero, the harmonic forms are precisely the constant functions.

\subsection{Hodge--Laplace operators on Lipschitz domains}

The ${L}^{2}$ de~Rham complex gives rise to the Hodge--Laplace operators. 
We focus on the ${L}^{2}$ de~Rham complex~\eqref{math:l2-de-rham-complex:diff} and its adjoint differential complex~\eqref{math:l2-de-rham-complex:diffstarzero}.
Let 
\begin{align*}
    \dom(\Laplace_{k})
    :=
    \left\{\;
        u \in H\Alt{k}(\Domain) \cap H^{\star}_{0}\Alt{k}(\Domain)
        \;\suchthat*\;
        \begin{array}{c}
            \cocartan^{k} u \in H\Alt{k-1}(\Domain),
            \\
            \cartan^{k} u \in H_{0}^{\star}\Alt{k+1}(\Domain)
        \end{array}
    \;\right\}.
\end{align*}
The $k$-th \emph{Hodge--Laplace operator} is the unbounded operator 
\begin{align}\label{math:hodge-laplace-operator}
    \Laplace_{k} : 
    \dom(\Laplace_{k}) \subseteq {L}^{2}\Alt{k}(\Domain) \to {L}^{2}\Alt{k}(\Domain),
    \quad 
    u \mapsto \cocartan^{k+1} \cartan^{k} u + \cartan^{k-1} \cocartan^{k} u
    .
\end{align}
This operator is densely defined, closed, and self-adjoint. 

\begin{remark}
We could alternatively construct a Hodge--Laplace operator on the differential complex~\eqref{math:l2-de-rham-complex:diffstar} and its adjoint differential complex~\eqref{math:l2-de-rham-complex:diffzero}. This operator would simply be isometric to the Hodge--Laplacian above via conjugation with the Hodge star. We will not consider this variant further, since all our results easily translate along the Hodge star isomorphism.
\end{remark}

In the remainder of this subsection we assume the closed range condition.
This implies the existence of the bounded potential operators~\eqref{math:cartan-pseudoinverse} and~\eqref{math:cocartan-pseudoinverse}:
\begin{gather*} 
    \cartan_{k}^\dagger 
    :
    {L}^{2}\Alt{k+1}(\Domain) 
    \to 
    H\Alt{k}(\Domain)
    \subseteq
    {L}^{2}\Alt{k}(\Domain),
    \qquad 
    \cocartan_{k}^\dagger 
    :
    {L}^{2}\Alt{k-1}(\Domain) 
    \to 
    H^{\star}_{0}\Alt{k}(\Domain)
    \subseteq
    {L}^{2}\Alt{k}(\Domain).
\end{gather*}
These generalized inverses are potential operators corresponding to the differential operators in the de~Rham complex. 
Importantly, they also provide a potential operator for the Hodge--Laplace operator:
\begin{align}\label{math:hodge-laplace-operator-pseudoinverse}
    \Laplace_{k}^{\dagger} : {L}^{2}\Alt{k}(\Domain) \to \dom(\Laplace_{k}) \subseteq {L}^{2}\Alt{k}(\Domain),
    \quad 
    u \mapsto \cartan_{k}^{\dagger} \cocartan_{k+1}^{\dagger} u + \cocartan_{k}^{\dagger} \cartan_{k-1}^{\dagger} u
    .
\end{align}
By standard Hilbert space theory, this operator is self-adjoint.
It acts as a generalized inverse of the Hodge--Laplace operator and satisfies 
\begin{align*}
    \Laplace_{k} u &= \Laplace_{k} \Laplace_{k}^\dagger \Laplace_{k} u, \quad u \in \dom(\Laplace_{k})
    ,
    \\
    \ker \Laplace_{k}^\dagger &= \left( \Laplace_{k} \dom(\Laplace_{k}) \right)^\perp
    ,
    \\
    \rng( \Laplace_{k}^\dagger )^\perp &= \ker\left( \Laplace_{k} : \dom(\Laplace_{k}) \subseteq {L}^{2}\Alt{k}(\Domain) \to {L}^{2}\Alt{k}(\Domain) \right) 
    .
\end{align*}
Note that by construction 
\begin{align*}
    \cartan^{k}   \Laplace_{k}^\dagger = \cocartan_{k+1}^\dagger,
    \quad 
    \cocartan^{k} \Laplace_{k}^\dagger =   \cartan_{k-1}^\dagger.
\end{align*}
Thus, the following operator norm identities hold:
\begin{gather*}
    \sup\limits_{\substack{ f \in {L}^{2}\Alt{k  }(\Domain) \setminus\{0\} }}
    \frac{ \|   \Laplace_{k}^\dagger f \|_{{L}^{2}\Alt{k}(\Domain)} }{ \| f \|_{{L}^{2}\Alt{k  }(\Domain)} }
    =
    \max(C_{\PF,\Domain,k},C_{\PF,\Domain,k-1})^{2}
    ,
    \\
    \sup\limits_{\substack{ f \in {L}^{2}\Alt{k  }(\Domain) \setminus\{0\} }}
    \frac{ \| \cocartan^{k} \Laplace_{k}^\dagger f \|_{{L}^{2}\Alt{k-1}(\Domain)} }{ \| f \|_{{L}^{2}\Alt{k  }(\Domain)} }
    =
    C_{\PF,\Domain,k-1}
    ,
    \\
    \sup\limits_{\substack{ f \in {L}^{2}\Alt{k  }(\Domain) \setminus\{0\} }}
    \frac{ \|   \cartan^{k} \Laplace_{k}^\dagger f \|_{{L}^{2}\Alt{k+1}(\Domain)} }{ \| f \|_{{L}^{2}\Alt{k  }(\Domain)} }
    =
    C_{\PF,\Domain,k}
    .
\end{gather*}
The two summands in $\Laplace_k^\dagger$ act on orthogonal subspaces:
$\cocartan_k^\dagger\cartan_{k-1}^\dagger$ acts on $\calR\Alt{k}$, whereas $\cartan_k^\dagger\cocartan_{k+1}^\dagger$ acts on $\calR^\star\Alt{k}$.
Their operator norms are $C_{\PF,\Domain,k-1}^2$ and $C_{\PF,\Domain,k}^2$, respectively.

\subsection{Hodge--Laplace eigenvalues on bounded convex domains}

For the remainder of this section, we assume that $\Domain$ is bounded and convex. 
Gaffney's inequality~\eqref{math:gaffney} applies and has numerous implications. 
For example, the following embedding is compact: 
\[
     H\Alt{k}(\Domain) \cap H_0^{\star}\Alt{k}(\Domain) \subseteq {L}^{2}\Alt{k}(\Domain).
\]
In turn, this implies compactness of the potential operators
\begin{align*}
    \Laplace_{k}^\dagger : {L}^{2}\Alt{k}(\Domain) \to {L}^{2}\Alt{k}(\Domain)
    .
\end{align*}
By the spectral theorem for compact self-adjoint operators of separable Hilbert spaces, 
the positive eigenvalues of $\Laplace_{k}^\dagger$ are countable, discrete, and with finite multiplicity, and their only possible accumulation point is zero.
${L}^{2}\Alt{k}(\Domain)$ has a countable orthonormal basis consisting of eigenvectors of the bounded operator $\Laplace_{k}^\dagger$. 
Moreover, zero is the only possible accumulation point of the eigenvalues. 

Correspondingly, 
the nonzero spectrum of $\Laplace_k$ consists of eigenvalues of finite multiplicity, with no finite accumulation point, diverging to infinity.
In particular, the nonzero eigenvalues of $\Laplace_{k}$ (with multiplicities) form an ascending sequence
\begin{align*}
    0 < \eigenvalue_{k,1} \leq \eigenvalue_{k,2} \leq \dots 
\end{align*}
The nonzero eigenvalues of $\Laplace_{k}$ are the reciprocals of the nonzero eigenvalues of $\Laplace_{k}^\dagger$.
Therefore, the smallest positive eigenvalue of $\Laplace_{k}$ is the reciprocal of the $L^{2}$ operator norm of $\Laplace_{k}^{\dagger}$.
Therefore, 
\begin{align}\label{math:eigenvalues-are-pf}
    \eigenvalue_{k,1} 
    = 
    \max\left( C_{\PF,\Domain,k} , C_{\PF,\Domain,k-1} \right)^{-2}
    .
\end{align}

Compact operators between Hilbert spaces have singular value decompositions.
The compactness of the potential operators gives the following singular value decompositions.
There exist a countable $L^{2}$-orthonormal system $u_{k,j} \in H\Alt{k}(\Domain)$, ${j \in \bbN}$,
that is a Hilbert basis of the orthogonal complement of the kernel $\calN\Alt{k}(\Domain)$,
another countable $L^{2}$-orthonormal system $u^{\star}_{k+1,j} \in H^{\star}_{0}\Alt{k+1}(\Domain)$, ${j \in \bbN}$, 
that is a Hilbert basis of the closed range $\calR\Alt{k+1}(\Domain)$, and a nonincreasing null sequence $\singvalue_{k,j} > 0$, ${j \in \bbN}$,
such that~\eqref{math:cartan-pseudoinverse} satisfies 
\begin{align*}
    \cartan_{k  }^{\dagger} = \sum_{j=1}^{\infty} \singvalue_{k,j}^{  } u_{k,j} \langle u^{\star}_{k+1,j}, \cdot \rangle
    .
\end{align*}
By adjointness,~\eqref{math:cocartan-pseudoinverse} satisfies 
\begin{align*}
    \cocartan_{k+1}^{\dagger} = \sum_{j=1}^{\infty} \singvalue_{k,j}^{  } u^{\star}_{k+1,j} \langle u_{k,j}, \cdot \rangle
    .
\end{align*}
Note that $(u_{k,j})_{j \in \bbN}$ is also a Hilbert basis of the closed range $\calR^{\star}\Alt{k}(\Domain)$, 
and that $(u^{\star}_{k+1,j})_{ j \in \bbN}$ is also a Hilbert basis of the orthogonal complement of $\calN^{\star}\Alt{k+1}(\Domain)$. 

Therefore, the former are coclosed and coexact, whereas the latter are closed and exact.
In particular, 
\begin{gather*}
    (u^{     }_{k,j})_{j \in \bbN} \subseteq H\Alt{k}(\Domain) \cap H^{\star}_{0}\Alt{k}(\Domain),
    \quad 
    (u^{\star}_{k,j})_{j \in \bbN} \subseteq H\Alt{k}(\Domain) \cap H^{\star}_{0}\Alt{k}(\Domain).
\end{gather*}
By Gaffney's inequality,
\begin{gather*}
    (u^{     }_{k,j})_{j \in \bbN} \subseteq H^{1}\Alt{k}(\Domain),
    \quad 
    (u^{\star}_{k,j})_{j \in \bbN} \subseteq H^{1}\Alt{k}(\Domain).
\end{gather*}
This leads to a singular value decomposition for the potential of the Hodge--Laplace operator:
\begin{align*}
    \Laplace_{k}^{\dagger}   
    = 
    \sum_{j=1}^{\infty} \singvalue_{k  ,j}^{2}         u_{k,j} \langle         u_{k,j}, \cdot \rangle
    +
    \sum_{j=1}^{\infty} \singvalue_{k-1,j}^{2} u^{\star}_{k,j} \langle u^{\star}_{k,j}, \cdot \rangle
    .
\end{align*}
In particular, the values $(\singvalue_{k  ,j}^{2})_{j \in \bbN}$ and $(\singvalue_{k-1,j}^{2})_{j \in \bbN}$ are the non-zero eigenvalues of $\Laplace_{k}^{\dagger}$ (with multiplicities)
and the two families $(u^{     }_{k,j})_{j \in \bbN}$ and $(u^{\star}_{k,j})_{j \in \bbN}$ are families of corresponding eigenvectors.
The former are the \emph{coexact eigenvectors} and the latter are the \emph{exact eigenvectors}.

Corresponding singular value decompositions hold for the unbounded differential operators. 
We write these unbounded operators as infinite sums of rank-one operators;
since these differential operators are unbounded, these infinite sums are only valid on the respective operator domains.
Thus,
\begin{align*}
    \cartan^{k  } u 
    &= 
    \sum_{j=1}^{\infty} \singvalue_{k,j}^{-1} u^{\star}_{k+1,j} \langle u_{k,j}, u \rangle, 
    &&
    u \in H\Alt{k}(\Domain),
    \\
    \cocartan^{k+1} u 
    &= 
    \sum_{j=1}^{\infty} \singvalue_{k,j}^{-1} u_{k,j} \langle u^{\star}_{k+1,j}, u \rangle, 
    &&
    u \in H^{\star}_{0}\Alt{k+1}(\Domain),
    \\
    \Laplace_{k}^{} u 
    &= 
    \sum_{j=1}^{\infty} \singvalue_{k  ,j}^{-2}         u_{k,j}   \langle         u_{k,j}, u \rangle
    +
    \sum_{j=1}^{\infty} \singvalue_{k-1,j}^{-2} u^{\star}_{k,j} \langle u^{\star}_{k,j},   u \rangle, 
    &&
    u \in \dom(\Laplace_{k}).
\end{align*}
Here, the inner products are the $L^{2}$ inner products. 
In particular, the values $(\singvalue_{k  ,j}^{-2})_{j \in \bbN}$ and $(\singvalue_{k-1,j}^{-2})_{j \in \bbN}$ are the non-zero eigenvalues (with multiplicities) of the Hodge--Laplacian $\Laplace_{k}$,
and the two orthonormal families $(u^{     }_{k,j})_{j \in \bbN}$ and $(u^{\star}_{k,j})_{j \in \bbN}$ are corresponding eigenvectors.

\begin{remark}
    For a bounded convex domain $\Domain$, the Laplace operator on $0$-forms $\Laplace_{0}$ is the Neumann--Laplacian. 
    It has a one-dimensional kernel.
    The Hodge--Laplacians $\Laplace_{k}$ with $k > 0$ have a trivial kernel and only nonzero eigenvalues.
\end{remark}

We close this discussion with a short proof of a smoothness result that is standard for the scalar Laplacian.

\begin{proposition}\label{proposition:smoothness-of-eigenforms}
    Let $\Domain$ be a bounded convex domain and let $u \in L^{2}\Alt{k}(\Domain)$ and $\lambda \geq 0$ such that $\Laplace_{k} u = \lambda u$
    in the sense of distributions.
    Then $u \in C^{\infty}\Alt{k}(\Domain)$.
\end{proposition}
\begin{proof}

    Let $x \in \Domain$. 
    Let $r > 0$ be so small that $\Ball{2r}{x} \subseteq \Domain$ and define the bounded open neighborhoods
    \[
        U_{\ell} := \Ball{ r + (\ell+1)^{-1} r }{x}, \quad \ell \in \bbN_{0}.
    \]
    We first consider scalar functions. 
    Write $H^{\ell}(\Domain)$ for the standard Sobolev space on $\Domain$ of order $\ell \in \bbN_{0}$.
    We use the scalar Laplacian $L$ in the sense of distributions.
    Suppose that $u \in L^{2}(\Domain)$ satisfies $L u = \lambda u$. 
    We know $u \in L^{2}(U_{0}) = H^{0}(U_{0})$. 
If $u \in H^{\ell}(U_{\ell})$,
    then a standard interior regularity result~\cite[Theorem~6.3.2]{evans2010book} and $L u = \lambda u$ imply that $u \in H^{\ell+2}(U_{\ell+2})$.
    An induction argument shows that $u \in H^{\ell}(U_{\ell})$ for any even $\ell \geq 0$. 
    As a consequence, $u \in H^{\ell}(U)$ for any $\ell \in \bbN_{0}$, where $U := \Ball{r}{x}$. 
    By the Sobolev embedding theorem, $u \in C^{\infty}(\Domain)$. 
        
    We recall that the Hodge--Laplacian locally acts on differential forms as a componentwise Laplacian.
    Let $u \in L^{2}\Alt{k}(\Domain)$ and $\lambda \geq 0$ such that $\Laplace_{k} u = \lambda u$.
    Consequently, each component of $u_{i_1,\dots,i_k} := u(e_{i_1},\dots,e_{i_k})$ 
    in the standard representation~\eqref{math:standard-representation} satisfies $u_{i_1,\dots,i_k} \in L^{2}(\Domain)$ 
    with $L u_{i_1,\dots,i_k} = \lambda u_{i_1,\dots,i_k}$. 
    By the above, $u_{i_1,\dots,i_k} \in C^{\infty}(\Domain)$.
    The proof is complete. 
\end{proof}

\section{Inequalities for Poincar\'e--Friedrichs constants}\label{section:inequalities}

We derive inequalities for Poincar\'e--Friedrichs constants over bounded star domains. 
The strongest results hold on bounded convex domains. 
We prove results on star domains via coordinate transformations.

We first prove an upper bound for the Poincar\'e--Friedrichs constant on bounded convex domains,
building on the classical upper bound for the Poincar\'e constant.
The seminal result by Payne, Weinberger, and Bebendorf~\cite{Pay_Wei_Poin_conv_60,bebendorf2003note} addresses the inequality for the gradient operator. 
The generalization to vector calculus was proven by Pauly~\cite{pauly2017maxwell,pauly2019maxwell}, on whose work the following proposition is based.

\begin{proposition}\label{proposition:convex-poincare-friedrichs}
    Let $\Domain \subseteq \bbR^{n}$ be a bounded convex domain and let $0 \leq k \leq n-1$. Then 
    \begin{gather*}
        C_{\PF,\Domain,k} \leq \frac{\diam(\Domain)}{\pi}.
    \end{gather*}
\end{proposition}
\begin{proof}
    Let $u \in H\Alt{k}(\Domain)$ be orthogonal to $\calN\Alt{k}(\Domain)$.
    By the Hodge decomposition, $u \in \calR^{\star}\Alt{k}(\Domain)$, hence $u \in H_0^{\star}\Alt{k}(\Domain)$ with $\cocartan^{k} u = 0$.
    Gaffney's inequality (Theorem~\ref{theorem:gaffney}) implies $u \in H^{1}\Alt{k}(\Domain)$ with 
    \begin{align}
        \| \nabla u \|_{{L}^{2}(\Domain)}^{2}
        \leq 
        \| \cartan^{k}   u \|_{{L}^{2}\Alt{k+1}(\Domain)}^{2}
        +
        \| \cocartan^{k} u \|_{{L}^{2}\Alt{k-1}(\Domain)}^{2}
        =
        \| \cartan^{k}   u \|_{{L}^{2}\Alt{k+1}(\Domain)}^{2}
        .
    \end{align}
    We know that $u$ is orthogonal to those members of $H\Alt{k}(\Domain)$ that have constant coefficients.
    The componentwise application of the classical Poincar\'e inequality on bounded convex domains shows that 
    \begin{gather*}
        \| u \|_{{L}^{2}(\Domain)}
        \leq 
        \frac{\diam(\Domain)}{\pi}
        \| \nabla u \|_{{L}^{2}(\Domain)}
        .
    \end{gather*}
    The desired statement follows. 
\end{proof}

This shows that on a bounded convex domain, 
the Poincar\'e constant for the gradient operator
is also an upper bound for the Poincar\'e--Friedrichs constant of the exterior derivatives. 

Our next goal is proving a stronger result:
the Poincar\'e--Friedrichs constants are monotonically decreasing in the degree of the differential forms. 
The result was first proven by Guerini and Savo for bounded convex domains with smooth boundary. 
Their proof techniques rely on curvature terms along the boundary and have no obvious extension to convex Lipschitz domains,
whose boundary maybe decidedly non-smooth.
However, we can recover the result using the Gaffney inequality on bounded convex domains.

We first prove an auxiliary result that concerns the average of contractions with unit vectors. 

\begin{lemma}\label{lemma:average-of-contractions}
    Let $u \in \Alt{k}(\bbR^{n})$. Then 
    \begin{gather*}
        \frac{1}{\volunitsphere{n-1}} 
        \int_{\unitsphere{n-1}} \| \unita \contract u \|^{2} \,\dif \unita
        = 
        \frac{k}{n}
        \| u \|^{2}
        .
    \end{gather*}
\end{lemma}
\begin{proof}
    Using the elementary identities
    \[
        \int_{\unitsphere{n-1}} \unita_i\unita_j \,\dif\unita
        =
        \frac{\volunitsphere{n-1}}{n} \delta_{ij},
    \]
    we find
    \begin{align*}
\int_{\unitsphere{n-1}} \| \unita \contract u \|^{2} \,\dif \unita
        &
        = 
        \sum_{1 \leq i,j \leq n}
        \langle e_{i} \contract u, e_{j} \contract u \rangle
        \int_{\unitsphere{n-1}} 
        \unita_{i} \unita_{j} 
        \,\dif \unita
        \\&
        = 
        \sum_{1 \leq i \leq n}
        \| e_{i} \contract u \|^{2}
        \int_{\unitsphere{n-1}} \unita_{i}^{2} 
        \,\dif \unita
        = 
        \frac{\volunitsphere{n-1}}{n}
        \sum_{1 \leq i \leq n}
        \| e_{i} \contract u \|^{2}
        .
    \end{align*}
    Finally, the definition of the contraction and the Euclidean inner product give 
    \begin{align*}
        &
        \frac{1}{\volunitsphere{n-1}}
        \int_{\unitsphere{n-1}} \| \unita \contract u \|^{2}
        \,\dif \unita
        = 
        \frac{1}{n}
        \sum_{1 \leq i \leq n}
        \| e_{i} \contract u \|^{2}
        = 
        \frac{k}{n}
        \| u \|^{2}
        .
    \end{align*}
    The proof is complete. 
\end{proof}

We are in a position to prove the monotonicity of the Poincar\'e--Friedrichs constants on bounded convex domains. 

\begin{theorem}\label{theorem:poincare-friedrichs-monotonicity}
    Let $\Domain \subseteq \bbR^{n}$ be a bounded convex domain and let $0 \leq k \leq n-1$.
    Then 
    \begin{gather*}
        \frac{ \diam(\Domain) }{\pi}
        \geq 
        C_{\PF,\Domain,0}
        \geq 
        C_{\PF,\Domain,1}
        \geq 
        \dots 
        \geq 
        C_{\PF,\Domain,n-1}
        .
    \end{gather*}
\end{theorem}
\begin{proof}
    It suffices to prove that for $1 \leq k \leq n-1$:
    \[
        C_{\PF,\Domain,k} \leq C_{\PF,\Domain,k-1}.
    \]
    The potential operators~\eqref{math:cartan-pseudoinverse} are bounded and they are compact due to Gaffney's inequality. 
    Hence their norms are attained:
    there exists $u \in H\Alt{k}(\Domain) \cap \calN\Alt{k}(\Domain)^\perp$ such that 
    \begin{gather*}
        \| u \|_{{L}^{2}(\Domain)} = C_{\PF,\Domain,k} \| \cartan^{k} u \|_{{L}^{2}(\Domain)}
        .
    \end{gather*}
    Notice that $u \in H^{\star}_{0}\Alt{k}(\Domain)$ with $\cocartan^{k} u = 0$.
    In particular, $u \in H^{1}\Alt{k}(\Domain)$ with 
    \begin{gather*}
        \| \nabla u \|_{{L}^{2}(\Domain)} \leq \| \cartan^{k} u \|_{{L}^{2}(\Domain)}.
    \end{gather*}
    Consider any vector $\unita \in \bbR^{n}$, interpreted as a constant vector field. 
    Define 
    \begin{gather*}
        g_{\unita} := \unita \contract u.
    \end{gather*}
    We observe that $g_{\unita} \in H^{\star}\Alt{k-1}(\Domain)$ with $\cocartan^{k-1} g_{\unita} = 0$.
    Indeed, since $\unita$ is constant, we find 
    \begin{align*} 
        \cocartan^{k-1} g_{\unita} 
        &
        = 
        (-1)^{k-1} 
        \star^{-1} \cartan^{n-k+1} \star ( \unita \contract u )
        \\&
        = 
        \star^{-1} \cartan^{n-k+1} ( \unita^{\flat} \wedge \star u )
        \\&
        = 
        - 
        \star^{-1} ( \unita^{\flat} \wedge \cartan^{n-k} \star u )
        .
    \end{align*} 
    Using standard calculations, this yields 
    \begin{align*} 
        \star^{-1} ( \unita^{\flat} \wedge \cartan^{n-k} \star u ) 
        &
        = 
        (-1)^{(n-k)k}
        \star ( \unita^{\flat} \wedge \cartan^{n-k} \star u ) 
        \\&
        = 
        (-1)^{(n-k)k}
        (-1)^{n-k+1}
        \unita \contract \star \cartan^{n-k} \star u 
\\&
        = 
        (-1)^{k}
        \unita \contract \star^{\inv} \cartan^{n-k} \star u
        = 
        \unita \contract \cocartan^{k} u
        = 
        0
        .
    \end{align*}
    Additionally, 
    because $u$ is approximated by forms from $C_{c}^{\infty}\Alt{k}(\Domain)$ within the Hilbert space $H^{\star}\Alt{k}(\Domain)$,
    we conclude that $\unita \contract u$ is approximated by forms from $C_{c}^{\infty}\Alt{k-1}(\Domain)$ within $H^{\star}\Alt{k-1}(\Domain)$.
    Therefore, $g_{\unita} \in H^{\star}_{0}\Alt{k-1}(\Domain)$.
    
    So $g_{\unita}$ is coclosed. 
    In the case $k-1 > 0$, we have that $g_{\unita}$ is coexact. 
    In the case $k-1 = 0$, we notice that 
    \begin{align*}
        \int_{\Domain} g_{\unita}(x) \,\dif x
        =
        \int_{\Domain} \unita(x) \contract u(x) \,\dif x
        =
        \int_{\Domain} \langle \unita(x)^{\flat}, u(x) \rangle \,\dif x
        =
        \langle u,\unita^\flat\rangle_{L^2(\Domain)}
        =
        0
    \end{align*}
    since $\unita^\flat \in \calN\Alt{1}(\Domain)$ and $u \perp \calN\Alt{1}(\Domain)$.
    By the Hodge decomposition over bounded convex domains, 
    we conclude that $g_{\unita}$ is coexact. 
    In particular, 
    \begin{gather*}
        g_{\unita} \in \calR^{\star}\Alt{k-1}(\Domain),
        \qquad 
        g_{\unita} \perp \calN\Alt{k-1}(\Domain).
    \end{gather*}
    We notice that $g_{\unita} \in H^{1}\Alt{k-1}(\Domain)$. 
    Thus, $g_{\unita} \in H\Alt{k-1}(\Domain)$.

    Recall the Cartan formula 
    \begin{gather*}
        \nabla u \cdot \unita = \cartan^{k-1}( \unita \contract u ) + \unita \contract \cartan^{k} u.
    \end{gather*}
    We take integrals of the squared pointwise norms and derive 
    \begin{align*}
        \int_{\unitsphere{n-1}} 
        \| \cartan^{k-1} ( \unita \contract u(x) ) \|^{2}
        \,\dif \unita
        & 
        = 
        \int_{\unitsphere{n-1}} 
            \| \nabla u(x) \cdot \unita - \unita \contract \cartan^{k} u(x) \|^{2}
        \,\dif \unita
        \\& 
        = 
        \int_{\unitsphere{n-1}} 
            \| \nabla u(x) \cdot \unita \|^{2}
            -
            2 \langle \nabla u(x) \cdot \unita , \unita \contract \cartan^{k} u(x) \rangle 
            +
            \| \unita \contract \cartan^{k} u(x) \|^{2}
        \,\dif \unita
        .
    \end{align*}
    We simplify these terms. Notice that
    \begin{align*}
        \int_{\unitsphere{n-1}} 
            \| \nabla u(x) \cdot \unita \|^{2}
        \,\dif \unita
        &
        =
        \sum_{1 \leq i,j \leq n}
        \langle \nabla u(x) \cdot e_{i}, \nabla u(x) \cdot e_{j} \rangle 
        \int_{\unitsphere{n-1}} 
            \unita_{i} \unita_{j} 
        \,\dif \unita
        \\&
        =
        \sum_{1 \leq i \leq n}
        \| \nabla u(x) \cdot e_{i} \|^{2}
        \int_{\unitsphere{n-1}} 
            \unita_{i}^{2}
        \,\dif \unita
        =
        \frac{\volunitsphere{n-1}}{n} 
        \| \nabla u(x) \|^{2} 
        .
    \end{align*}
    Moreover, 
    \begin{align*}
        &
        \int_{\unitsphere{n-1}} 
            \langle \nabla u(x) \cdot \unita , \unita \contract \cartan^{k} u(x) \rangle 
        \,\dif \unita
        \\&
        =
        \sum_{1 \leq i,j,\ell \leq n}
        \left\langle \nabla u(x) \cdot e_{i} , e_{j} \contract \left( e_{\ell}^{\flat} \wedge ( \nabla u(x) \cdot e_{\ell} ) \right) \right\rangle
        \int_{\unitsphere{n-1}} 
        \unita_{i} \unita_{j}
        \,\dif \unita
        \\&
        =
        \sum_{1 \leq i,\ell \leq n}
        \left\langle \nabla u(x) \cdot e_{i} , e_{i} \contract \left( e_{\ell}^{\flat} \wedge ( \nabla u(x) \cdot e_{\ell} )\right) \right\rangle
        \int_{\unitsphere{n-1}} 
        \unita_{i}^{2}
        \,\dif \unita
        \\&
        =
        \sum_{1 \leq i,\ell \leq n}
        \left\langle e_{i}^{\flat} \wedge ( \nabla u(x) \cdot e_{i} ), e_{\ell}^{\flat} \wedge ( \nabla u(x) \cdot e_{\ell} ) \right\rangle 
        \int_{\unitsphere{n-1}} 
        \unita_{i}^{2}
        \,\dif \unita
        =
        \frac{\volunitsphere{n-1}}{n}
        \| \cartan^{k} u(x) \|^{2}
        .
    \end{align*}
    We recall Lemma~\ref{lemma:average-of-contractions} and find almost everywhere 
    \begin{align*}
        \frac{n}{\volunitsphere{n-1}}
        \int_{\unitsphere{n-1}} 
        \| \cartan^{k-1} g_{\unita}(x) \|^{2}
        \,\dif \unita
        = 
        \| \nabla u(x) \|^{2}
        +
        (k-1)
        \| \cartan^{k} u(x) \|^{2}
        .
    \end{align*}
    Integrating over $\Domain$ yields 
    \begin{gather*}
        \frac{n}{\volunitsphere{n-1}}
        \int_{\unitsphere{n-1}} 
        \int_{\Domain} 
        \| \cartan^{k-1} g_{\unita}(x) \|^{2}
        \,\dif x
        \,\dif \unita
        = 
        \int_{\Domain} 
        \| \nabla u(x) \|^{2}
        +
        (k-1)
        \| \cartan^{k} u(x) \|^{2}
        \,\dif x
        \leq  
        k
        \int_{\Domain} 
        \| \cartan^{k} u(x) \|^{2}
        \,\dif x
        .
    \end{gather*}
    For every $\unita \in \unitsphere{n-1}$, the Poincar\'e--Friedrichs inequality gives
    \[
        \|g_{\unita}\|_{L^{2}(\Domain)}^{2}
        \leq
        C_{\PF,\Domain,k-1}^{2}
        \|\cartan^{k-1}g_{\unita}\|_{L^{2}(\Domain)}^{2}.
    \]
    We take the average over $\unita$, use Fubini, and apply
    Lemma~\ref{lemma:average-of-contractions}:
    \begin{align*}
        C_{\PF,\Domain,k-1}^{2}
        \frac{1}{\volunitsphere{n-1}}
        \int_{\unitsphere{n-1}}
        \int_{\Domain}
        \|\cartan^{k-1} g_{\unita}(x)\|^{2}
        \,\dif x
        \,\dif\unita
        &\geq
        \frac{1}{\volunitsphere{n-1}}
        \int_{\Domain}
        \int_{\unitsphere{n-1}}
        \|\unita \contract u(x)\|^{2}
        \,\dif\unita
        \,\dif x
        \\
        &=
        \frac{k}{n}
        \int_{\Domain}
        \|u(x)\|^{2}
        \,\dif x 
        .
    \end{align*}
    On the other hand, we have already shown 
    \[
        \frac{n}{\volunitsphere{n-1}}
        \int_{\unitsphere{n-1}}
        \int_{\Domain}
        \|\cartan^{k-1}g_{\unita}(x)\|^{2}
        \,\dif x
        \,\dif\unita
        \leq
        k
        \int_{\Domain}
        \|\cartan^{k}u(x)\|^{2}
        \,\dif x 
        .
    \]
    Combining the last two inequalities yields
    \[
        \int_{\Domain}
        \|u(x)\|^{2}
        \,\dif x
        \leq
        C_{\PF,\Domain,k-1}^{2}
        \int_{\Domain}
        \|\cartan^{k}u(x)\|^{2}
        \,\dif x 
        .
    \]
    Since
    \[
        \int_{\Domain}
        \|u(x)\|^{2}
        \,\dif x
        =
        C_{\PF,\Domain,k}^{2}
        \int_{\Domain}
        \|\cartan^{k}u(x)\|^{2}
        \,\dif x 
        ,
    \]
    we obtain $C_{\PF,\Domain,k}^{2} \leq C_{\PF,\Domain,k-1}^{2}$.
    The proof is complete.
\end{proof}

Inequalities on convex domains are the conceptual pillar for proving inequalities on star domains.
To that end, we discuss bi-Lipschitz changes of variables.
Suppose that $\Domain, \Domain' \subseteq \bbR^{n}$ are bounded Lipschitz domains, and that we have a bi-Lipschitz mapping 
\[
    \Phi : \Domain \to \Domain'.
\]
Then the Jacobian $\Jacobian\Phi$ exists almost everywhere. 
We define the pullback of $u \in L^{2}\Alt{k}(\Domain')$ as 
\begin{gather*}
    \Phi^{\ast} u(x)
    =
    \sum_{1 \leq i_1 < \dots < i_k \leq n }
    ( u_{i_1,\dots,i_k} \circ \Phi ) 
    \Jacobian\Phi(x)^{\ast}\cartanx^{i_1} \wedge \dots \wedge \Jacobian\Phi(x)^{\ast}\cartanx^{i_k}
    .
\end{gather*}
One can show that the pullback defines a linear mapping 
\begin{gather*}
    \Phi^{\ast} : L^{2}\Alt{k}(\Domain') \to L^{2}\Alt{k}(\Domain).
\end{gather*}
Moreover, 
\begin{gather*}
    \Phi^{\ast} : H\Alt{k}(\Domain') \to H\Alt{k}(\Domain).
\end{gather*}
The pullback commutes with the exterior derivative:
\begin{gather*}
    \cartan^{k} \Phi^{\ast} u = \Phi^{\ast} \cartan^{k} u.
\end{gather*}
These are standard properties of the pullback. 
In addition, we need the following quantitative statement.

\begin{proposition}\label{proposition:pullback-estimate}
    Let $\Domain, \Domain' \subseteq \bbR^{n}$ be bounded Lipschitz domains and let $\Phi : \Domain \to \Domain'$ be bi-Lipschitz. 
    Let $0 \leq k \leq n$. 
    Then 
    \begin{gather*}
         \left( \int_{\Domain} \| \Phi^{\ast} u(x) \|^{2} \,\dif x \right)^{\frac 1 2}
         \leq 
         C^{\ast}_{k}(\Phi)
         \left( \int_{\Domain'} \| u(x') \|^{2} \,\dif x' \right)^{\frac 1 2}
         ,
    \end{gather*}
    where 
    \begin{gather*}
         C^{\ast}_{k}(\Phi)
         :=
         \esssup_{ x \in \Domain } \frac{ \sigma_{1}(x) \cdots \sigma_{k}(x) }{ \sqrt{ \sigma_{1}(x)\cdots\sigma_{n}(x) } } 
    \end{gather*}
    and the singular values of $\Jacobian\Phi$ at $x$ are written 
    \begin{gather*}
        \sigma_{1}(x) \geq \dots \geq \sigma_{n}(x).
    \end{gather*}
\end{proposition}
\begin{proof}
    The Jacobian $\Jacobian\Phi$ exists for almost all $x \in \Domain$. 
    Its singular values at $x$ are denoted by 
    \begin{gather*}
        \sigma_{1}(x) \geq \dots \geq \sigma_{n}(x).
    \end{gather*}
    We compute 
    \begin{align*}
         \int_{\Domain} \| \Phi^{\ast} u(x) \|^{2} \,\dif x
         &
         \leq 
         \int_{\Domain} \sigma_{1}(x)^{2} \cdots \sigma_{k}(x)^{2} \| u \circ \Phi(x) \|^{2} \,\dif x
         \\&
         =
         \int_{\Domain'} 
         \frac{
             \sigma_{1}(\Phi^{-1}(x'))^{2} \cdots \sigma_{k}(\Phi^{-1}(x'))^{2}
         }{
             \sigma_{1}(\Phi^{-1}(x'))\cdots\sigma_{n}(\Phi^{-1}(x'))
         } 
         \| u(x') \|^{2}
         \,\dif x'
         \\&
         \leq 
         \esssup_{ x \in \Domain } \frac{\sigma_{1}(x)^{2}\cdots\sigma_{k}(x)^{2}}{\sigma_{1}(x)\cdots\sigma_{n}(x)} 
         \int_{\Domain'} 
         \| u(x') \|^{2}
         \,\dif x'
         .
    \end{align*}
    Note that if $k = 0$, then the numerator is an empty product and equals $1$. 
    In the last step, we have used $| \det \Jacobian\Phi(x) | =  \sigma_{1}(x) \cdots \sigma_{n}(x)$.
    The desired inequality follows.
\end{proof}

The Poincar\'e--Friedrichs constant is the stability constant of a potential problem in the exterior derivative:
given a differential $(k+1)$-form, find a $k$-form that acts as the potential under the exterior derivative. 
Potential problems of this type are preserved under pullbacks along bi-Lipschitz mappings. 

Therefore, when the domain is star-shaped with respect to a ball, 
we can transfer the problem from the star domain onto the unit ball, solve the problem there, and reverse the transformation.
The Poincar\'e--Friedrichs constant on the star domain is then estimated by the Poincar\'e--Friedrichs constant on the unit ball 
and geometric parameters incurred due to the two coordinate changes. 
These geometric parameters finally draw the connection to the bi-Lipschitz estimates discussed earlier in this manuscript. 

\begin{theorem}\label{theorem:poincare-friedrichs-inequality-on-star-domains}
    Let $\Domain$ be a domain contained in the ball $\Ball{R}{0}$,
    containing the ball $\Ball{r}{0}$,
    and star-shaped with respect to the ball $\Ball{\rho}{0}$.
    Assume $\rho \leq r \leq R$, and let $0 \leq k \leq n-1$. 
    
    If $\Smudge : \Ball{1}{0} \to \Domain$ is the expansion transformation, 
    then 
    \begin{gather*}
        C_{\PF,\Domain,k}
        \leq 
        C_{\PF,\Ball{1}{0},k} C^{\ast}_{k  }(\Smudge^{-1})
        C^{\ast}_{k+1}(\Smudge     )
        ,
    \end{gather*}
    where the following estimates hold: 
    \begin{align*}
        C_{\PF,\Ball{1}{0},k}
        &
        \leq 
        \frac{2}{\pi}
        ,
        \\
        C^{\ast}_{k+1}(\Smudge     )
        &
        \leq 
        \left\{
        \begin{array}{ll}
            R^{\frac{n}{2}} 
            &
            \text{ if } k = n-1,
            \\
            \tfrac{1}{2\rho} 
            \max_{ a \in [r,R] }
            a^{k-\frac{n}{2}+1}
            \left( \sqrt{ a^{2} + 3 \rho^{2} } + \sqrt{ a^{2} - \rho^{2} } \right)
            & 
            \text{ if } 0 \leq k < n-1,
        \end{array}
        \right.
        \\
        C^{\ast}_{k  }(\Smudge^{-1})
        &
        \leq 
        \left\{
        \begin{array}{ll}
            \tfrac{1}{2\rho} 
            \max_{ b \in [r,R] }
            b^{\frac{n}{2} - k}
            \left( \sqrt{ b^{2} + 3 \rho^{2} } + \sqrt{ b^{2} - \rho^{2} } \right)
            & 
            \text{ if } 1 \leq k < n,
            \\
            R^{\frac{n}{2}} 
            &
            \text{ if } k = 0.
        \end{array}
        \right.
    \end{align*}
    In particular, 
    \begin{gather*}
        C^{\ast}_{k  }(\Smudge^{-1})
        C^{\ast}_{k+1}(\Smudge     )
        \leq 
        R
        \left( \frac{R}{r} \right)^{ | k - \frac{n}{2} | }
        \left( \frac{ \sqrt{ R^{2} + 3 \rho^{2} } + \sqrt{ R^{2} - \rho^{2} } }{ 2 \rho } \right)^{2}
        .
    \end{gather*}
\end{theorem}
\begin{proof}
    Abbreviate $\frakB := \Ball{1}{0}$ for the $n$-dimensional unit ball. 
    We recall that we have a bi-Lipschitz mapping 
    \begin{align*}
        \Smudge : \Ball{1}{0} \to \Domain.
    \end{align*}
    Suppose that $f \in L^{2}\Alt{k+1}(\Domain)$ with $\cartan^{k+1} f = 0$.
    Define $f^{\ast} := \Smudge^{\ast} f \in L^{2}\Alt{k+1}(\frakB)$, so that $\cartan^{k+1} f^{\ast} = 0$.
    Since $f^{\ast}$ is closed, it is exact. 
    There exists $u^{\ast} \in H\Alt{k}(\frakB)$ with 
    \begin{align*}
        u^{\ast} \perp \calN\Alt{k}(\frakB),
        \qquad 
        \cartan^{k} u^{\ast} = f^{\ast}.
    \end{align*}
    Lastly, 
    define $u := \Smudge^{-\ast} u^{\ast} \in H\Alt{k}(\Domain)$.
    We have $\cartan^{k} u = f$.
    
    Using Proposition~\ref{proposition:pullback-estimate},
    we immediately have 
    \begin{align*}
         \|        u \|_{L^{2}(\Domain)} 
         &\leq 
         C^{\ast}_{k}(\Smudge^{-1}) \| u^{\ast} \|_{L^{2}(\frakB)}
         ,
         \\
         \| u^{\ast} \|_{L^{2}(\frakB)}  
         &\leq 
         C_{\PF,\frakB,k} \| f^{\ast} \|_{L^{2}(\frakB)} 
         ,
         \\
         \| f^{\ast} \|_{L^{2}(\frakB)}  
         &\leq 
         C^{\ast}_{k+1}(\Smudge)        
         \| f \|_{L^{2}(\Domain)}
         .
    \end{align*}
    Proposition~\ref{proposition:convex-poincare-friedrichs} implies that $C_{\PF,\frakB,k} \leq 2/\pi$.
    Let us write the singular values of $\Jacobian\Smudge$ at $x \in \frakB$ as 
$\sigma_{1}(x) \geq \dots \geq \sigma_{n}(x)$.
Recall that these constants take the explicit form 
    \begin{gather*}
         C^{\ast}_{\ell}(\Smudge)
         =
         \esssup_{ x \in \frakB } 
         \frac{ \sigma_{1}(x) \cdots \sigma_{\ell}(x) }{ \sqrt{ \sigma_{1}(x)\cdots\sigma_{n}(x) } } 
         ,
         \\
         C^{\ast}_{\ell}(\Smudge^{-1})
         =
         \esssup_{ x \in \frakB } 
         \frac{ \sqrt{ \sigma_{1}(x)\cdots\sigma_{n}(x) } }{ \sigma_{n}(x) \cdots \sigma_{n-\ell+1}(x) } 
         ,
    \end{gather*}
    for $0 \leq \ell \leq n$. 
    We compute them for different values of $\ell$. 
    First, 
    \begin{gather*}
        C^{\ast}_{n}(\Smudge)
        =
        \esssup_{ \unitx \in \unitsphere{n-1} } 
        \smudge(\unitx)^{\frac{n}{2}}
        \leq 
        R^{\frac{n}{2}}
        ,
        \qquad 
        C^{\ast}_{0}(\Smudge)
        =
        \esssup_{ \unitx \in \unitsphere{n-1} } 
        \smudge(\unitx)^{-\frac{n}{2}}
        \leq 
        r^{-\frac{n}{2}}
        ,
        \\
        C^{\ast}_{n}(\Smudge^{-1})
        =
        \esssup_{ \unitx \in \unitsphere{n-1} } 
        \smudge(\unitx)^{-\frac{n}{2}}
        \leq 
        r^{-\frac{n}{2}}
        ,
        \qquad 
        C^{\ast}_{0}(\Smudge^{-1})
        =
        \esssup_{ \unitx \in \unitsphere{n-1} } 
        \smudge(\unitx)^{\frac{n}{2}}
        \leq 
        R^{\frac{n}{2}}
        .
    \end{gather*}
    For $1 \leq \ell \leq n-1$, we estimate 
    \begin{align*}
         C^{\ast}_{\ell}(\Smudge)
         &
         =
         \esssup_{ \unitx \in \unitsphere{n-1} } 
         \smudge(\unitx)^{\ell-1-\frac{n}{2}}
         \sigma_1(\unitx)
\\&
         \leq
         \frac{1}{2\rho} 
         \max_{ a \in [r,R] }
         a^{\ell-\frac{n}{2}}
         \left( \sqrt{ 4 \rho^{2} + ( a^{2} - \rho^{2} ) } + \sqrt{ a^{2} - \rho^{2} } \right)
         \\&
         = 
         \frac{1}{2\rho} 
         \max_{ a \in [r,R] }
         a^{\ell-\frac{n}{2}}
         \left( \sqrt{ a^{2} + 3 \rho^{2} } + \sqrt{ a^{2} - \rho^{2} } \right)
         .
    \end{align*}
    Similarly, 
    \begin{align*}
         C^{\ast}_{\ell}(\Smudge^{-1})
         &
         =
         \esssup_{ \unitx \in \unitsphere{n-1} } 
         \smudge(\unitx)^{\frac{n}{2}-(\ell-1)}
         \sigma_n(\unitx)^{-1}
         \\&
         =
         \esssup_{ \unitx \in \unitsphere{n-1} } 
         \smudge(\unitx)^{\frac{n}{2}-(\ell-1)-2}
         \sigma_1(\unitx)
         \\&
         =
         \frac{1}{2}
         \esssup_{ \unitx \in \unitsphere{n-1} } 
         \smudge(\unitx)^{\frac{n}{2}-(\ell-1)-2}
         \left( \sqrt{ 4 \smudge(\unitx)^{2} + \| \nabla_{S} \smudge(\unitx) \|^{2} } + \| \nabla_{S} \smudge(\unitx) \| \right)
         \\&
         =
         \frac{1}{2}
         \esssup_{ \unitx \in \unitsphere{n-1} } 
         \smudge(\unitx)^{\frac{n}{2}-\ell-1}
         \left( \sqrt{ 4 \smudge(\unitx)^{2} + \| \nabla_{S} \smudge(\unitx) \|^{2} } + \| \nabla_{S} \smudge(\unitx) \| \right)
         \\&
         \leq 
         \frac{1}{2\rho}
         \esssup_{ \unitx \in \unitsphere{n-1} } 
         \smudge(\unitx)^{\frac{n}{2}-\ell}
         \left( \sqrt{ 3 \rho^{2} + \smudge(\unitx)^{2} } + \sqrt{ \smudge(\unitx)^{2} - \rho^{2} } \right)
         \\&
         =
         \frac{1}{2\rho}
         \max_{ b \in [r,R] }
         b^{\frac{n}{2}-\ell}
         \left( \sqrt{ b^{2} + 3 \rho^{2} } + \sqrt{ b^{2} - \rho^{2} } \right)
         .
    \end{align*}
    For the final product estimate, we notice 
    \begin{gather*}
        \max_{ a \in [r,R] } a^{k-\frac{n}{2}+1}
        \max_{ b \in [r,R] } b^{\frac{n}{2} - k}
        \leq 
        R \left( \tfrac{R}{r} \right)^{ | k - \frac{n}{2} | }
        ,
        \qquad 
        C^{\ast}_{0  }(\Smudge^{-1}) = C^{\ast}_{n  }(\Smudge^{  }) = \max_{ b \in [r,R] } b^{\frac{n}{2}}
        ,
        \\
        1
        \leq 
        \max_{ a \in [r,R] } \tfrac{ \sqrt{ a^{2} + 3 \rho^{2} } + \sqrt{ a^{2} - \rho^{2} } }{ 2 \rho }
        \leq 
        \tfrac{ \sqrt{ R^{2} + 3 \rho^{2} } + \sqrt{ R^{2} - \rho^{2} } }{ 2 \rho }
        .
    \end{gather*}
    The desired result is evident. The proof is complete. 
\end{proof}

\begin{remark}
    The Poincar\'e constant on the unit ball is at most $2/\pi \approx 0.63661977$ via the Payne--Weinberger--Bebendorf estimate. 
    The Poincar\'e constant of the unit ball is the reciprocal of the square root of the first positive Neumann eigenvalue, 
    which is determined by a Bessel-function equation and depends on the dimension $n$.
\end{remark}

For reasons of brevity, we do not further study the exact maxima in the upper estimate of Theorem~\ref{theorem:poincare-friedrichs-inequality-on-star-domains}.
The estimate requires a case distinction on the form degree $k$ and the ratios of the geometric parameters $\rho$, $r$, and $R$. 
We state the following auxiliary lemma, which can be helpful for such calculations.

\begin{lemma}\label{lemma:parameter-auxiliary-lemma}
    Let $t \in \bbR$. Consider the mapping 
    \[
        F_t : [1,\infty) \to \bbR, \qquad x \mapsto x^t \left( \sqrt{x+3} + \sqrt{x-1} \right).
    \]
    If $t \leq -\frac{1}{2}$, then $F_{t}$ has a local maximum at $1 < x_{\max}$, where 
    \[
        x_{\max}
        :=
        \frac{6t}{2t-\sqrt{16t^2-3}} 
        ,
    \]
    and $F_{t}$ is strictly decreasing on $(x_{\max},\infty)$. 
    
    If $-\tfrac{1}{2} < t < -\frac{\sqrt 3}{4}$, then $F_{t}$ has a local maximum and a local minimum at $1 < x_{\max} \leq x_{\min}$, respectively, where 
    \[
        x_{\max}
        :=
        \frac{6t}{2t-\sqrt{16t^2-3}} 
        ,
        \qquad 
        x_{\min}
        :=
        \frac{6t}{2t+\sqrt{16t^2-3}} 
        ,
    \]
    and $F_t$ is strictly increasing on $(x_{\min},\infty)$. 
    
    If $t \geq -\frac{\sqrt 3}{4}$, then $F_{t}$ is strictly increasing towards infinity. 
\end{lemma}
\begin{proof}
    For $t\geq 0$, $F_{t}$ is strictly increasing on $[1,\infty)$. 
    It remains to analyze the case $t<0$. 
    We notice that 
    \begin{align*}
        \partial_{x} F_t(x)
        &
        =
        t x^{t-1} \left( \sqrt{x+3} + \sqrt{x-1} \right)
        +
        x^{t} \left( \frac{1}{2\sqrt{x+3}} + \frac{1}{2\sqrt{x-1}} \right)
        \\&
        =
        t x^{t-1} \left( \sqrt{x+3} + \sqrt{x-1} \right)
        +
        x^{t} \frac{\sqrt{x+3} + \sqrt{x-1}}{2\sqrt{(x+3)(x-1)}}
        \\&
        =
        x^{t-1} \left( \sqrt{x+3} + \sqrt{x-1} \right)
        \left( 
            t 
            +
            \frac{x}{2\sqrt{(x+3)(x-1)}}
        \right)
        .
    \end{align*}
    This is well-defined for $x > 1$. Define 
    \begin{align*}
        \phi : (1,\infty) \to \bbR,
        \quad 
        x
        \mapsto 
        t + \frac{x}{2\sqrt{(x+3)(x-1)}}
        =
        t + \frac{x}{2\sqrt{x^2+2x-3}}.
    \end{align*}
    The sign of $\partial_{x} F_t(x)$ is the sign of $\phi(x)$. 
    We compute 
    \begin{align*}
        \partial_x \left( \log ( \phi(x) - t ) \right)
        &
        =
        \partial_x \left( 
            \log x-\log 2-\frac{1}{2}\log(x^2+2x-3)
        \right)
        \\&
        =
        \frac{1}{x}-\frac{x+1}{x^2+2x-3}
        \\&
        =
        \frac{x-3}{x(x^2+2x-3)}
        .
    \end{align*}
    Thus $\phi$ is strictly decreasing on $(1,3)$ and strictly increasing on $(3,\infty)$. 
    Notice that 
    \[
        \lim_{x\to     1}\phi(x) = \infty,
        \qquad
        \phi(3)                  = t + \frac{\sqrt 3}{4},
        \qquad
        \lim_{x\to\infty}\phi(x) = t + \frac{1}{2}.
    \]
    In the case $t\geq -\tfrac{\sqrt 3}{4}$, we have $\phi(x) \geq 0$ for $x \in (1,\infty)$, with at most one critical point, 
    and hence $F_{t}$ is strictly increasing on $[1,\infty)$.
    It remains to consider the case $t < -\tfrac{\sqrt 3}{4}$. 
    
    If $t \leq - \tfrac 1 2$, then $F_t$ first increases, then assumes a maximum, and keeps decreasing from there. 
    If instead $- \tfrac 1 2 < t < -\tfrac{\sqrt 3}{4}$, then $F_{t}$ first increases until it assumes a maximum, then decreases to a minimum, 
    and then increases to infinity. 
    We determine the critical points in that case. 
    
    If $t < 0$ and $x > 1$, we have the equivalences 
    \begin{align*}
        \frac{x}{2\sqrt{x^2+2x-3}} = -t
        &
        \equivalent 
        \sqrt{1+2x^{-1}-3x^{-2}} = \frac{1}{-2t}
        \\&
        \equivalent 
        1 + 2x^{-1} - 3x^{-2} = \frac{1}{4t^{2}}
\\&
        \equivalent 
        \frac{1}{9} - \frac{2}{3}x^{-1} + x^{-2} = \frac{4}{9} - \frac{1}{12t^{2}}
        \\&
        \equivalent 
        \left( x^{-1} - \frac{1}{3} \right)^{2} = \frac{4}{9} - \frac{1}{12t^{2}}
        .
    \end{align*}
    We derive 
    \begin{align*}
        &
x^{-1} 
        = 
        \frac{1}{3} \pm \left( \frac{4}{9} - \frac{1}{12t^{2}} \right)^{\frac 1 2}
        =
        \frac{1}{3} \pm \left( \frac{16t^{2}-3}{36 t^{2}} \right)^{\frac 1 2}
        = 
        \frac{2t}{6t} \pm \left( \frac{16t^{2}-3}{36 t^{2}} \right)^{\frac 1 2}
        \\&
        \equivalent 
        x 
        = 
        \frac{6t}{ 2t \pm \sqrt{ 16t^{2}-3 } }
        = 
        \frac{6}{ 2 \mp \sqrt{ 16-3t^{-2} } }
        .
    \end{align*}
    Note that $-\tfrac{1}{2} < t < -\tfrac{\sqrt 3}{4}$ enforces that the denominator is nonzero in both branches. 
    For $t \leq -\tfrac{1}{2}$, only one root is positive, corresponding to the local maximum.
    The proof is complete. 
\end{proof}

\section{Vector calculus}\label{section:vectorcalculus}

In order to make these results available to a wider audience, we also provide them in the language of vector calculus, used prominently in the mathematical theory of electromagnetism~\cite{picard1984elementary,costabel1990remark,weber1980local,witsch1993remark,hiptmair2002finite,hiptmair2015maxwell}.  
We focus primarily on the curl and divergence operators. 

Let $\Domain \subseteq \bbR^{3}$ be a domain. 
Recall that ${L}^{2}(\Domain)$ is the space of scalar-valued square-integrable functions defined on $\Domain$ and write $\bfL^{2}(\Domain) := {L}^{2}(\Domain)^{3}$ for vector-valued functions with each component in ${L}^{2}(\Domain)$. 
We write $H^{1}(\Domain)$ for the space of scalar-valued ${L}^{2}(\Domain)$ functions with weak gradients in $\bfL^{2}(\Domain)$.
Similarly, $\bfH(\curl,\Domain)$ denotes the space of vector fields in $\bfL^{2}(\Domain)$ with weak curls in $\bfL^{2}(\Domain)$, 
and $\bfH(\divergence,\Domain)$ denotes the space of vector fields $\bfL^{2}(\Domain)$ with weak divergences in ${L}^{2}(\Domain)$.
Formally,
\begin{gather*}
    H^{1}(\Domain) := \{ u \in {L}^{2}(\Domain) \suchthat \grad u \in \bfL^{2}(\Domain) \},
    \\
    \bfH(\curl,\Domain) := \{ \bfu \in \bfL^{2}(\Domain) \suchthat \curl \bfu \in \bfL^{2}(\Domain) \},
    \\ 
    \bfH(\divergence,\Domain) := \{ \bfu \in \bfL^{2}(\Domain) \suchthat \divergence \bfu \in {L}^{2}(\Domain) \}.
\end{gather*}
These are Hilbert spaces equipped with the natural graph norms corresponding to the gradient, curl, and divergence operators. 

Writing $\tilde u \in {L}^{2}(\bbR^{3})$ for the trivial extension by zero of any $u \in {L}^{2}(\Domain)$,
and 
$\tilde \bfu \in \bfL^{2}(\bbR^{3})$ for the trivial extension by zero of any $\bfu \in \bfL^{2}(\Domain)$,
the spaces with boundary conditions are  
\begin{gather*}
    H^{1}_{0}(\Domain) := \{ u \in {L}^{2}(\Domain) \suchthat \tilde  u \in H^{1}(\bbR^{3}) \},
    \\
    \bfH_{0}(\curl,\Domain) := \{ \bfu \in \bfL^{2}(\Domain) \suchthat \tilde \bfu \in \bfH(\curl,\bbR^{3}) \},
    \\ 
    \bfH_{0}(\divergence,\Domain) := \{ \bfu \in \bfL^{2}(\Domain) \suchthat \tilde \bfu \in \bfH(\divergence,\bbR^{3}) \}.
\end{gather*}
The gradient, curl, and divergence operators are bounded between these Hilbert spaces. 
Therefore, these spaces form the de~Rham complex of 3D vector calculus
\begin{align*}
    \begin{CD}
        0 \to 
        H^{1}(\Domain ) 
        @>{\grad}>> 
        \bfH(\curl,\Domain ) 
        @>{\curl}>> 
        \bfH(\divergence,\Domain ) 
        @>{\divergence}>> 
        {L}^{2}(\Domain )
        \to 0
    \end{CD}
\end{align*}
and its dual differential complex
\begin{align*}
    \begin{CD}
        0 \gets 
        {L}^{2}(\Domain )
        @<{-\divergence}<< 
        \bfH_{0}(\divergence,\Domain ) 
        @<{\curl}<< 
        \bfH_{0}(\curl,\Domain ) 
        @<{-\grad}<< 
        H^{1}_{0}(\Domain ) 
        \gets 0
        .
    \end{CD}
\end{align*}
The fundamental stability properties of this differential complex are characterized by the Poincar\'e--Friedrichs inequalities, 
which bound the operator norms of potentials for the differential operators. 

\begin{subequations}\label{math:pfvector}
We say that a Poincar\'e--Friedrichs inequality holds when the following is true:
there exists a constant $C_{\grad,\Domain}$
such that for every $u \in H^{1}(\Domain)$
there exists $w \in H^{1}(\Domain)$
satisfying $\grad w = \grad u$ and 
\begin{align}\label{math:pfvector:gradient} 
    \| w \|_{L^{2}(\Domain)} \leq C_{\grad,\Domain} \| \grad u \|_{\bfL^{2}(\Domain)}.
\end{align}
If $w$ is additionally required to have zero mean value, then this is the Poincar\'e inequality. 
In particular, 
\begin{align*}
    \inf_{ c \in \bbR } \| u - c \|_{L^{2}(\Domain)} \leq C_{\grad,\Domain} \| \grad u \|_{\bfL^2(\Domain)}.
\end{align*}
These observations hold because on a domain, which is connected, the kernel of the gradient consists only of the constant functions. 

The analoga for the curl operator and the divergence operator are also known as Weber inequalities.
We say that a Poincar\'e--Friedrichs--Weber inequality holds for the curl operator when the following is true:
there exists a constant $C_{\curl,\Domain}$
such that for every $\bfu \in \bfH(\curl,\Domain)$
there exists $\bfw \in \bfH(\curl,\Domain)$
satisfying $\curl \bfw = \curl \bfu$ and 
\begin{align}\label{math:pfvector:curl}
    \| \bfw \|_{{\bfL}^{2}(\Domain)} \leq C_{\curl,\Domain} \| \curl \bfu \|_{{\bfL}^{2}(\Domain)}.
\end{align}
Equivalently,
\begin{align*}
    \inf_{ \substack{ \boldphi \in \bfH(\curl,\Domain) \\ \curl \boldphi = 0 } } \| \bfu - \boldphi \|_{\bfL^{2}(\Domain)} \leq C_{\curl,\Domain} \| \curl \bfu \|_{\bfL^2(\Domain)}.
\end{align*}
Likewise, we say that a Poincar\'e--Friedrichs--Weber inequality holds for the divergence operator 
when the following is true:
there exists a constant $C_{\divergence,\Domain}$
such that for every $\bfu \in \bfH(\divergence,\Domain)$
there exists $\bfw \in \bfH(\divergence,\Domain)$
satisfying $\divergence \bfw = \divergence \bfu$ and 
\begin{align}\label{math:pfvector:divergence}
    \| \bfw \|_{{\bfL}^{2}(\Domain)} \leq C_{\divergence,\Domain} \| \divergence \bfu \|_{{L}^{2}(\Domain)}.
\end{align}
Equivalently,
\begin{align*}
    \inf_{ \substack{ \boldphi \in \bfH(\divergence,\Domain) \\ \divergence \boldphi = 0 } } \| \bfu - \boldphi \|_{\bfL^{2}(\Domain)} \leq C_{\divergence,\Domain} \| \divergence \bfu \|_{L^2(\Domain)}.
\end{align*}
\end{subequations}

Our results in the language of differential forms can be translated into three-dimensional vector calculus. 
We have a commuting diagram 
\begin{align*}
    \begin{CD}
        H^{1}(\Domain)
        @>\grad>>
        \bfH(\curl,\Domain)
        @>\curl>>
        \bfH(\divergence,\Domain)
        @>\divergence>>
        L^{2}(\Domain)
        \\
        @V{\Id}VV
        @V{\flat}VV
        @V{\star\flat}VV
        @V{\star}VV
        \\
        H\Alt{0}(\Domain)
        @>\cartan^{0}>>
        H\Alt{1}(\Domain)
        @>\cartan^{1}>>
        H\Alt{2}(\Domain)
        @>\cartan^{2}>>
        H\Alt{3}(\Domain)
        .
    \end{CD}
\end{align*}
The main outcomes of this manuscript for the setting of vector calculus read as follows.
First, Theorem~\ref{theorem:poincare-friedrichs-monotonicity} applies to three-dimensional bounded convex domains.

\begin{theorem}\label{theorem:vector-calculus-convex-monotonicity}
    Let $\Domain \subseteq \bbR^{3}$ be a bounded convex domain.
    Then~\eqref{math:pfvector} holds and the smallest possible constants satisfy 
    \begin{gather*}
        \frac{\diam(\Domain)}{\pi}
        \geq 
        C_{\grad,\Domain}
        \geq 
        C_{\curl,\Domain}
        \geq 
        C_{\divergence,\Domain}
        .
    \end{gather*}
\end{theorem}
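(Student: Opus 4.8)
The plan is to transport the statement into the language of the $\Lebesgue^{2}$ de~Rham complex of differential forms and then to read off the three inequalities from Theorem~\ref{theorem:eigenvaluegaps} together with the upper bound in Proposition~\ref{proposition:explicit_bounds}. The only genuine work is the translation; the analytic content has already been established in the preceding sections.

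First I would invoke the standard isometric identification, via vector proxies, of the 3D vector calculus de~Rham complex carrying $\grad,\curl,\divergence$ with the exterior-derivative complex~\eqref{math:l2derhamcomplex:diff} over $\Domain\subseteq\bbR^{3}$. Under this identification $H^{1}(\Domain)=H\Lambda^{0}(\Domain)$, $\bfH(\curl,\Domain)=H\Lambda^{1}(\Domain)$, $\bfH(\divergence,\Domain)=H\Lambda^{2}(\Domain)$ and $\Lebesgue^{2}(\Domain)=\Lebesgue^{2}\Lambda^{3}(\Domain)$, with $\grad,\curl,\divergence$ corresponding to $\cartan^{0},\cartan^{1},\cartan^{2}$ and all $\Lebesgue^{2}$ norms preserved. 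Care must be taken to match the \emph{primal} complex, the one without boundary conditions featuring in the theorem, with~\eqref{math:l2derhamcomplex:diff} and not with its adjoint~\eqref{math:l2derhamcomplex:diffstarzero}; this is the step I would write out most carefully, since it is precisely what fixes the order of the constants. Granting it, the Poincar\'e--Friedrichs constants match up, $C_{\grad,\Domain}=C_{\PF,\Domain,0}$, $C_{\curl,\Domain}=C_{\PF,\Domain,1}$ and $C_{\divergence,\Domain}=C_{\PF,\Domain,2}$, where the right-hand sides are the operator norms of the potential operators as in~\eqref{math:pseudoinverse_operator_norm_is_pf_constant}.

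Second, since $\Domain$ is bounded and convex it is a bounded Lipschitz domain by Lemma~\ref{lemma:stardomainsarelipschitz}, so the closed range condition holds and the bounded potential operators $\cartan_{k}^{\dagger}$ of~\eqref{math:cartan_pseudoinverse} exist for $k=0,1,2$. Reading this back through the proxy dictionary says exactly that the three inequalities collected in~\eqref{math:pfvector} are valid, with the optimal constant in each equal to the operator norm of the corresponding potential operator; together with the already-noted fact that the set of admissible constants is an intersection of closed intervals, this proves the first assertion of the theorem and the well-definedness of $C_{\grad,\Domain},C_{\curl,\Domain},C_{\divergence,\Domain}$.

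Finally I would apply Theorem~\ref{theorem:eigenvaluegaps} with $n=3$, which — through the Guerini--Savo comparison of the smallest Hodge--Laplace eigenvalues underlying its proof — gives the monotonicity of the Poincar\'e--Friedrichs constants in the form degree, reading $C_{\PF,\Domain,0}\geq C_{\PF,\Domain,1}\geq C_{\PF,\Domain,2}$ in dimension three, i.e. $C_{\grad,\Domain}\geq C_{\curl,\Domain}\geq C_{\divergence,\Domain}$, and the upper bound $C_{\PF,\Domain,0}\leq\diam(\Domain)/\pi$ from~\eqref{math:explicit_bounds:neumann} in Proposition~\ref{proposition:explicit_bounds}, i.e. $\diam(\Domain)/\pi\geq C_{\grad,\Domain}$. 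Concatenating these displayed inequalities yields the claimed chain. The main obstacle is thus not any estimate but the bookkeeping that pins $C_{\grad,\Domain},C_{\curl,\Domain},C_{\divergence,\Domain}$ to $C_{\PF,\Domain,0},C_{\PF,\Domain,1},C_{\PF,\Domain,2}$ in this order; once that is settled the proof is a short concatenation of prior results.
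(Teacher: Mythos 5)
Your proof is correct and reconstructs exactly the argument the paper leaves implicit: translate via vector proxies so that $C_{\grad,\Domain}=C_{\PF,\Domain,0}$, $C_{\curl,\Domain}=C_{\PF,\Domain,1}$, $C_{\divergence,\Domain}=C_{\PF,\Domain,2}$, secure existence of the constants through Lemma~\ref{lemma:stardomainsarelipschitz} and the closed range condition, and then concatenate the upper bound~\eqref{math:explicit_bounds:neumann} of Proposition~\ref{proposition:explicit_bounds} with the monotonicity from Theorem~\ref{theorem:eigenvaluegaps}. It is also worth noting that you correctly read Theorem~\ref{theorem:eigenvaluegaps} as asserting that the constants \emph{decrease} in the form degree, $C_{\PF,0}\geq C_{\PF,1}\geq\dots\geq C_{\PF,n-1}$, which is what the proof of Lemma~\ref{lemma:eigenvaluegaps_strictlyconvex} actually establishes (it derives $C_{\PF,0}^{-2}<C_{\PF,1}^{-2}<\dots<C_{\PF,n-1}^{-2}$ from the increasing Hodge--Laplace eigenvalues), even though the printed inequality signs and indices in the statements of Lemma~\ref{lemma:eigenvaluegaps_strictlyconvex} and Theorem~\ref{theorem:eigenvaluegaps} appear to be reversed.
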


Second, three-dimensional bounded domains that are star-shaped with respect to a ball admit the following estimates
as a consequence of Theorem~\ref{theorem:poincare-friedrichs-inequality-on-star-domains}. 

\begin{theorem}\label{theorem:vector-calculus-star-domain-estimates}
    Let $\Domain \subseteq \bbR^{3}$ be a domain contained in the ball $\Ball{R}{0}$,
    containing the ball $\Ball{r}{0}$,
    and star-shaped with respect to the ball $\Ball{\rho}{0}$.
    Assume $\rho \leq r \leq R$. 
    Then 
    \begin{align*}
        C_{\grad,\Domain}
        &\leq 
        \frac{R}{\pi \rho}
        \left(
            \sqrt{R^{2}+3\rho^{2}}
            +
            \sqrt{R^{2}-\rho^{2}}
        \right),
        \\
        C_{\curl,\Domain}
        &\leq 
        \frac{R}{2\pi \rho^{2}}
        \left(
            \sqrt{R^{2}+3\rho^{2}}
            +
            \sqrt{R^{2}-\rho^{2}}
        \right)^{2},
        \\
        C_{\divergence,\Domain}
        &\leq 
        \frac{R}{\pi \rho}
        \left(
            \sqrt{R^{2}+3\rho^{2}}
            +
            \sqrt{R^{2}-\rho^{2}}
        \right).
    \end{align*}
\end{theorem}

\begin{proof}
    Under the standard Euclidean identification of scalar functions, vector fields, and differential forms in $\bbR^3$, 
    the exterior derivative corresponds to the gradient, curl, and divergence operators.
    Hence,
    \[
        C_{\grad,\Domain}=C_{\PF,\Domain,0},
        \qquad
        C_{\curl,\Domain}=C_{\PF,\Domain,1},
        \qquad
        C_{\divergence,\Domain}=C_{\PF,\Domain,2}.
    \]
    For $m \in \bbR$, define
    \begin{gather*}
        A_{m}
        :=
        \max_{ a \in [r,R] }
        a^{m}
        \left(
            \sqrt{a^2+3\rho^2}
            +
            \sqrt{a^2-\rho^2}
        \right)
        .
    \end{gather*}
    Theorem~\ref{theorem:poincare-friedrichs-inequality-on-star-domains}, with $n=3$, gives
    \begin{align*}
        C_{\PF,\Domain,0}
        &\leq
        C_{\PF,\Ball{1}{0},0}
        C_0^\ast(\Smudge^{-1}) C_1^\ast(\Smudge)
        \leq
        C_{\PF,\Ball{1}{0},0}
        R^{3/2}
        \frac{1}{2\rho}
        A_{-\frac{1}{2}}
        =
        C_{\PF,\Ball{1}{0},0}
        \frac{R^{3/2}}{2\rho} A_{-\frac{1}{2}},
        \\
        C_{\PF,\Domain,1}
        &\leq
        C_{\PF,\Ball{1}{0},1}
        C_1^\ast(\Smudge^{-1}) C_2^\ast(\Smudge)
        \leq
        C_{\PF,\Ball{1}{0},1}
        \frac{1}{2\rho}A_{\frac{1}{2}}
        \frac{1}{2\rho}A_{\frac{1}{2}}
        =
        \frac{C_{\PF,\Ball{1}{0},1}}{4\rho^2} A_{\frac{1}{2}}^{2},
        \\
        C_{\PF,\Domain,2}
        &\leq
        C_{\PF,\Ball{1}{0},2}
        C_2^\ast(\Smudge^{-1}) C_3^\ast(\Smudge)
        \leq
        C_{\PF,\Ball{1}{0},2}
        \frac{1}{2\rho}A_{-\frac{1}{2}}
        R^{3/2}
        =
        C_{\PF,\Ball{1}{0},2}
        \frac{R^{3/2}}{2\rho} A_{-\frac{1}{2}}.
    \end{align*}
    It remains to evaluate $A_{-\frac{1}{2}}$ and $A_{\frac{1}{2}}$.
    Set
    \[
        F_t(x) := x^t \left( \sqrt{x+3} + \sqrt{x-1} \right),
        \qquad
        x \in [1,\infty)
        .
    \]
    With the change of variables $x=a^2/\rho^2$, we obtain
    \begin{align*}
        A_m
        &=
        \rho^{m+1}
        \max_{ x \in [ (r/\rho)^2, (R/\rho)^2 ] }
        x^{m/2}
        \left(
            \sqrt{x+3}
            +
            \sqrt{x-1}
        \right)
=
        \rho^{m+1}
        \max_{ x \in [ (r/\rho)^2, (R/\rho)^2 ] }
        F_{m/2}(x).
    \end{align*}
    Since $-\tfrac{1}{4} > -\tfrac{\sqrt 3}{4}$ and $\tfrac{1}{4} > -\tfrac{\sqrt 3}{4}$,
    Lemma~\ref{lemma:parameter-auxiliary-lemma} implies that both
    $F_{-\tfrac{1}{4}}$ and $F_{\tfrac{1}{4}}$ are strictly increasing on $[1,\infty)$.
    Therefore,
    \begin{align*}
        A_{-\frac{1}{2}}
        =
        R^{-\frac{1}{2}}
        \left(
            \sqrt{R^2+3\rho^2}
            +
            \sqrt{R^2-\rho^2}
        \right),
        \qquad 
        A_{\frac{1}{2}}
        =
        R^{\frac{1}{2}}
        \left(
            \sqrt{R^2+3\rho^2}
            +
            \sqrt{R^2-\rho^2}
        \right).
    \end{align*}
    Substituting these values into the preceding estimates gives
    \begin{align*}
        C_{\PF,\Domain,0}
        &\leq
        \frac{R}{\pi\rho}
        \left(
            \sqrt{R^2+3\rho^2}
            +
            \sqrt{R^2-\rho^2}
        \right)
        ,
        \\ C_{\PF,\Domain,1}
        &\leq
        \frac{R}{2\pi\rho^2}
        \left(
            \sqrt{R^2+3\rho^2}
            +
            \sqrt{R^2-\rho^2}
        \right)^2,
        \\
        C_{\PF,\Domain,2}
        &\leq
        \frac{R}{\pi\rho}
        \left(
            \sqrt{R^2+3\rho^2}
            +
            \sqrt{R^2-\rho^2}
        \right)
        .
    \end{align*}
    The stated vector-calculus estimates follow from the identifications at the beginning of the proof.
\end{proof}

\subsection*{Acknowledgements}

The author appreciates helpful discussions with Th\'eo\-phile Chau\-mont-Frelet and Martin Vohral\'ik.

\end{document}